\newcommand{\Z}{\mathbb{Z}}
\newcommand{\K}{\mathbb{K}}
\newcommand{\F}{\mathbb{F}}
\DeclareSymbolFont{bbold}{U}{bbold}{m}{n}
\DeclareSymbolFontAlphabet{\mathbbold}{bbold}
\newcommand{\sK}{\mathscr{K}}
\newcommand{\cC}{\mathcal{C}}
\newcommand{\cD}{\mathcal{D}}
\newcommand{\core}[1]{\sK ({#1})}
\DeclareMathOperator{\Hom}{Hom}
\DeclareMathOperator{\Aut}{Aut}
\DeclareMathOperator{\SymMonBicat}{SymMonBicat}
\DeclareMathOperator{\Cob}{Cob}
\DeclareMathOperator{\Fun}{Fun}
\DeclareMathOperator{\Frob}{Frob}
\DeclareMathOperator{\Vect}{Vect}
\DeclareMathOperator{\ev}{ev}
\DeclareMathOperator{\coev}{coev}
\DeclareMathOperator{\End}{End}
\DeclareMathOperator{\Alg}{Alg}
\DeclareMathOperator{\Nat}{Nat}
\DeclareMathOperator{\Bicat}{Bicat}
\DeclareMathOperator{\Pic}{Pic}
\newcommand{\ori}{\mathrm{or}}
\newcommand{\id}{\mathrm{id}}
\newcommand{\Id}{\mathrm{Id}}
\newcommand{\ot}{\otimes}
\newcommand{\eps}{\varepsilon}
\newcommand{\To}{\Rightarrow}
\newcommand{\fd}{\mathrm{fd}}
\newcommand{\fr}{\mathrm{fr}}
\theoremstyle{plain} 
\newtheorem{newdef}{Definition}[section]
\newtheorem{theorem}[newdef]{Theorem}
\newtheorem{lemma}[newdef]{Lemma}
\newtheorem{prop}[newdef]{Proposition}
\newtheorem{cor}[newdef]{Corollary}
\newtheorem{conj}[newdef]{Conjecture}
\theoremstyle{definition}
\newtheorem{remark}[newdef]{Remark}
\newtheorem{ex}[newdef]{Example}
\numberwithin{equation}{section}
\begin{document}
 \title{The Serre Automorphism via Homotopy Actions
  and the Cobordism Hypothesis for Oriented Manifolds} 
 \author{Jan
  Hesse} 
 \address{Fachbereich Mathematik,
 Universit\"at Hamburg,
  Bereich Algebra und Zahlentheorie,
 Bundesstraße 55, D – 20 146
  Hamburg}
 \author{Alessandro Valentino}
 \address{Institut f\"ur
  Mathematik,
 Universit\"at Z\"urich
 Winterthurerstrasse 190
 CH-8057
  Z\"urich
 }
 \date{}
 
\begin{abstract}
  We explicitly construct an $SO(2)$-action on a skeletal version of
  the 2-dimensional framed bordism bicategory. By the 2-dimensional
  Cobordism Hypothesis for framed manifolds, we obtain an
  $SO(2)$-action on the core of fully-dualizable objects of the target
  bicategory. This action is shown to coincide with the one given by
  the Serre automorphism. We give an explicit description of the
  bicategory of homotopy fixed points of this action, and discuss its
  relation to the classification of oriented 2d topological quantum field
  theories. 
\end{abstract}
\begin{flushright}
    {ZMP-HH/16-29 \\ Hamburger Beitr\"age zur Mathematik Nr. 636}\\[1cm]
\end{flushright}
\maketitle
\section{Introduction}
As defined by Atiyah in \cite{ati88} and Segal in \cite{segal}, an
$n$-dimensional Topological Quantum Field Theory (TQFT) consists of a
functor between two symmetric monoidal categories, namely a category
of $n$-cobordisms, and a category of algebraic objects. This
definition was introduced to axiomatize the locality properties of
the path integral, and has given rise to a fruitful interplay between
mathematics and physics in the last 30 years. A prominent example is
given by a quantum-field-theoretic interpretation of the Jones
polynomial by Witten in \cite{witten89}.

More recently, there has been a renewed interest in the study of
TQFTs, due in great part to the Baez-Dolan Cobordism Hypothesis and
its proof by Lurie, whose main objects of investigation are
\emph{fully extended} TQFTs. These are a generalization of the notion
of $n$-dimensional TQFTs, where data is assigned to manifolds of
codimension up to $n$. The Baez-Dolan Cobordism Hypothesis, originally
stated in \cite{bd95}, and proved by Lurie in \cite{Lurie09} in an
$\infty$-categorical version, can be stated as follows: fully extended
\emph{framed} TQFTs are classified by their value on a point, which
must be a fully dualizable object in the target symmetric monoidal
$ (\infty,n)$-category $\cC$. Moreover, the $\infty$-groupoid
$\core{\cC^{\fd}}$ given by the core of fully dualizable objects of
$\cC$ carries a homotopy $O(n)$-action induced by the ``rotation of
the framing'' on the framed $(\infty,n)$-cobordism category
\cite[Corollary 2.4.10]{Lurie09}. The inclusion
$SO(n)\hookrightarrow O(n)$ then induces an $SO(n)$-action on
$\core{\cC^{\fd}}$.
By the Cobordism Hyothesis for manifolds whose tangent bundle is equipped with an additional $G$-structure, homotopy fixed-points for this action classify fully extended
\emph{oriented} TQFTs. It is relevant to notice that in \cite{Lurie09}
the homotopy $O(n)$-action on the framed $(\infty,n)$-category of
cobordisms is not explicitly constructed, or even briefly
sketched. For an extensive introduction to extended TQFTs and the
Cobordism Hypothesis, we refer the reader to \cite{freedcob}.

Blurring the distinction between $(\infty,2)$-categories and
bicategories, in \cite{fhlt} it is argued that in the case where the
target is given by the bicategory $\Alg_2$ of algebras, bimodules, and
intertwiners, the fully dualizable objects are semisimple
finite-dimensional algebras, and that the additional
$SO(2)$-fixed-points structure should correspond to the structure of a
symmetric Frobenius algebra. Via a direct construction, in
\cite{schommerpries-classification} it is showed that the bigroupoid
$\Frob$ of Frobenius algebras, Morita contexts and intertwiners indeed
classifies fully extended oriented 2-dimensional TQFTs valued in
$\Alg_2$. In \cite{davi11}, it is observed that the $SO(2)$-action
given by the Serre automorphism on the core of fully-dualizable
objects of $\Alg_2$ is trivializable. In a purely bicategorical
setting, in \cite{hsv16} the homotopy-fixed-point bigroupoid of the
$SO(2)$-action on $\Alg_2$ is computed, and it is shown that it
coincides with $\Frob$.

In the present paper we provide an explicit $SO(2)$-action on the
framed bordism bicategory, and show that the $SO(2)$-action induced on
$\core{\cC^{\fd}}$ for any symmetric monoidal bicategory $\cC$ is
given by the Serre automorphism, regarded as a pseudo-natural isomorphism of
the identity functor. More precisely, we make use of a presentation
of the framed bordism bicategory provided in \cite{piotr14} to
construct such an $SO(2)$-action.

By the Cobordism Hypothesis for framed manifolds, which has been
proven in the setting of bicategories in \cite{piotr14}, there is an
equivalence of bicategories
\begin{equation}
  \label{eq:framed-cob-hyp}
  \Fun_\ot(\Cob_{2,1,0}^\fr, \cC) \cong \core{\cC^\fd}.
\end{equation}
This equivalence allows us to transport the $SO(2)$-action on the
framed bordism bicategory to the core of fully-dualizable objects of
$\cC$. We then prove that this induced $SO(2)$-action on
$\core{\cC^\fd}$ is given precisely by the Serre automorphism, showing
that the Serre automorphism has indeed a geometric origin, as expected
from \cite{Lurie09}.

Along the way, we also provide results concerning monoidal homotopy
actions which are useful in determining when such actions are
trivializable. The relevance for TQFT is the following: in the case of
a trivializable $SO(2)$-action, \emph{any} framed fully extended 2d
TQFT can be promoted to an oriented one by providing the appropriate
structure of a homotopy fixed point. In particular, we apply these results to the case of
invertible 2d TQFTs, which have recently attracted interest for their
application to condensed matter physics, more specifically to the
study of topological insulators \cite{Freed:2014iua, Freed:2014eja,
  freedhopkins}. Namely, fully extended invertible TQFTs have been
proposed as the low energy limit of short-range entanglement systems;
see \cite{Freed:2014eja} for a discussion of these topics.

First defintions of monoidal bicategories appear in \cite{kv-bicat}, \cite{baez-neuchl} and \cite{day-street}, with a first full definition of a symmetric monoidal bicategory in \cite{mccrudden}. We will refer to \cite{schommerpries-classification} for technical details.
In section \ref{sec:fram-bord-bicat}, we use the
wire-diagram calculus developed in \cite{bart14}.

It is worth noticing that the study of actions of groups on higher
categories and their homotopy fixed points is also of independent
interest, see for instance \cite{egno, bermomb} for the case of finite
groups.


The paper is organized as follows.\\ 
In Section
\ref{sec:fully-dual-objects} we recall the notion of a fully-dualizable
object in a symmetric monoidal bicategory $\cC$. For each such an
object $X$, we define the Serre automorphism as a certain
1-endomorphism of $X$. We show that the Serre automorphism is a
pseudo-natural transformation of the identity functor on
$\core{\cC^\fd}$, which is moreover monoidal. This suffices to define an
$SO(2)$-action on $\core{\cC^\fd}$.

Section \ref{sec:triviality-actions} investigates when a group action
on a bicategory $\cC$ is equivalent to the trivial action. We obtain a
general criterion for when such an action is trivializable. 

In Section \ref{sec:comp-homot-fixed}, we compute the bicategory of
homotopy fixed points of an $SO(2)$-action coming from a
pseudo-natural transformation of the identity functor of an arbitrary
bicategory $\cC$. This generalizes the main result in \cite{hsv16},
which computes homotopy fixed points of the trivial $SO(2)$-action on
$\Alg_2^\fd$. Our more general theorem allows us to give an explicit
description of the bicategory of homotopy fixed points of the Serre
automorphism.

In Section \ref{sec:fram-bord-bicat}, we introduce a skeletal version
of the framed bordism bicategory by generators and relations, and
define a non-trivial $SO(2)$-action on this bicategory. By the framed
Cobordism Hypothesis, as in Equation (\ref{eq:framed-cob-hyp}), we
obtain an $SO(2)$-action on $\core{\cC^\fd}$, which we prove to
coincide with the one given by the Serre automorphism.

In Section \ref{sec:inv} we discuss invertible 2d TQFTs, providing a
general criterion for the trivialization of the $SO(2)$-action in this
case.

In Section \ref{sec:comments}, we give an outlook on \emph{homotopy
  co-invariants} of the $SO(2)$-action, and argue about their relation
to the Cobordism Hypothesis for oriented manifolds.

\section*{Acknowledgments}
The authors would like to thank Domenico Fiorenza, Claudia
Scheimbauer and Christoph Schweigert for useful
discussions. Furthermore, the authors would like to thank the referee
for useful comments and remarks which improved the paper significantly. J.H. is supported by the RTG 1670
\enquote{Mathematics inspired by String Theory and Quantum Field
  Theory}. A.V. is partly supported by the NCCR SwissMAP, funded by
the Swiss National Science Foundation, and by the COST Action MP1405
QSPACE, supported by COST (European Cooperation in Science and
Technology).
\section{Fully-dualizable objects and the Serre automorphism}
\label{sec:fully-dual-objects}
The aim of this section is to introduce the main objects of the present paper. On
the algebraic side, these are fully-dualizable objects in a symmetric
monoidal bicategory $\cC$, and the Serre automorphism. Though some of the
following material has already appeared in the literature, we recall
the relevant definitions in order to fix notation. For details, we refer the reader to \cite{piotr14}.
\begin{newdef}
  \label{def:dual-pair}
  A dual pair in a symmetric monoidal bicategory $\cC$ consists of an
  object $X$, an object $X^*$, two 1-morphisms
  \begin{equation}
    \begin{aligned}
      \ev_X &: X \ot X^* \to 1 \\
      \coev_X &: 1 \to X^* \ot X
    \end{aligned}
  \end{equation}
  and two invertible 2-morphisms $\alpha$ and $\beta$ in the diagrams
  below.
  \begin{equation}
    \vcenter{\hbox{\includegraphics[]{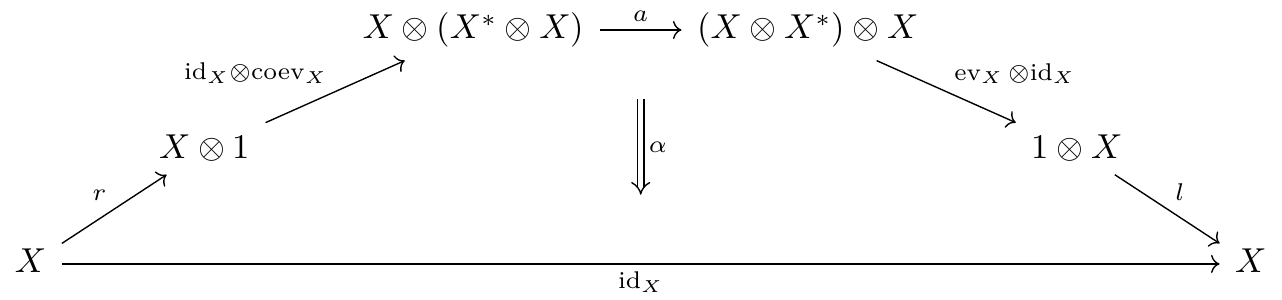}}}
  \end{equation}
  \begin{equation}
    \vcenter{\hbox{\includegraphics[]{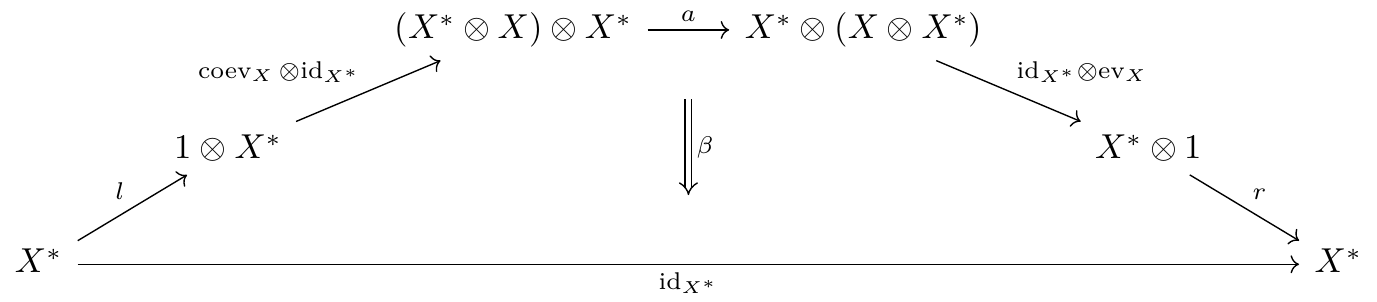}}}
  \end{equation}
  We call an object $X$ of $\cC$ dualizable if it can be completed to
  a dual pair. A dual pair is said to be coherent if the
  ``swallowtail'' equations are satisfied, as in
  \cite[Def. 2.6]{piotr14}.
\end{newdef}

\begin{remark}
  Given a dual pair, it is always possible to modify the 2-cell
  $\beta$ in such a way that the swallowtail are fulfilled,
  cf. \cite[Theorem 2.7]{piotr14}.
\end{remark}
Dual pairs can be organized into a bicategory by defining appropriate
1- and 2-morphisms between them, cf. \cite[Section
2.1]{piotr14}. The bicategory of dual pairs turns
out to be a 2-groupoid. Moreover, the bicategory of coherent dual
pairs is equivalent to the core of dualizable objects in $\cC$. In
particular, this shows that any two coherent dual pairs over the same
dualizable object are equivalent.

We now come to the stronger concept of fully-dualizability.
\begin{newdef}
  \label{def:fully-dualizable-object}
  An object $X$ in a symmetric monoidal bicategory is called
  fully-dualizable if it can be completed into a dual pair and the
  evaluation and coevaluation maps admit both left- and right
  adjoints.
\end{newdef}
Note that if left- and right adjoints exists, the adjoint maps will
have adjoints themselves, since we work in a bicategorical setting,
cf. \cite{piotr14} 
. Note that if left- and right adjoints for the 1-morphisms $\ev$ and $\coev$ exist, these adjoint 1-morphisms will in turn have additional adjoints themselves. Thus, Definition
\ref{def:fully-dualizable-object} agrees with the definition of
\cite{Lurie09} in the special case of bicategories.

\subsection{The Serre automorphism}
Recall that by definition, the evaluation morphism for a fully
dualizable object $X$ admits both a right-adjoint $\ev^R_X$ and a left
adjoint $\ev^L_X$. We use these adjoints to define the
Serre-automorphism of $X$:
\begin{newdef}
  \label{def:serre-auto}
  Let $X$ be a fully-dualizable object in a symmetric monoidal
  bicategory. The Serre automorphism of $X$ is the following
  composition of 1-morphisms:
  \begin{equation}
    S_X:  X \cong X \ot 1 \xrightarrow{ \id_X \ot \ev^R_X} X \ot X \ot
    X^* \xrightarrow{\tau_{X,X} \ot \id_{X^*}} X \ot X \ot X^*
    \xrightarrow{\id_X \ot \ev_X} X \ot 1 \cong X.
  \end{equation}
\end{newdef}
Notice that the Serre automorphism is actually a 1-equivalence of $X$,
since an inverse is given by the 1-morphism
\begin{equation}
  \label{eq:serre-inverse}
  S_X^{-1}=(\id_X \circ \ev_X) \circ (\tau_{X,X} \ot \id_{X^*}) \circ
  (\id_X \ot \ev_X^L),
\end{equation}
cf. \cite{Lurie09,dss13}.\\
The next lemma is well-known \cite{Lurie09, piotr14}, and is
straightforward to show graphically.
\begin{lemma}
  \label{lem:right-adjoint-ev-serre}
  Let $X$ be fully-dualizable in $\cC$. Then, there are 2-isomorphisms
  \begin{equation}
    \begin{aligned}
      \ev_X^R &\cong \tau_{X^*,X} \circ (\id_{X^*} \ot S_X) \circ \coev_X \\
      \ev_X^L &\cong \tau_{X^*,X} \circ (\id_{X^*} \ot S_X^{-1}) \circ
      \coev_X .
    \end{aligned}
  \end{equation}
\end{lemma}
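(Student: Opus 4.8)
The statement is standard, and as the text notes it is cleanest to argue in the graphical calculus for the symmetric monoidal bicategory, following \cite{bart14, schommerpries-classification}. I will prove the first 2-isomorphism; the second is then obtained by the verbatim argument with the left adjoint $\ev_X^L$ in place of $\ev_X^R$ and the inverse Serre automorphism $S_X^{-1}$ of \eqref{eq:serre-inverse} in place of $S_X$. To set up, I would first fix a \emph{coherent} dual pair $(X, X^*, \ev_X, \coev_X)$; by the Remark following Definition \ref{def:dual-pair} this is no loss of generality, and it makes the zigzag (snake) 2-isomorphisms of the dual pair available.

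Next I would expand the right-hand side $\tau_{X^*,X}\circ(\id_{X^*}\ot S_X)\circ \coev_X$ by substituting the definition of $S_X$. Drawing the resulting 1-morphism $1 \to X \ot X^*$ as a string diagram, one reads off exactly four elementary pieces: the cup $\coev_X$, the cup $\ev_X^R$ coming from inside $S_X$, the single cap $\ev_X$ (also from $S_X$), and the two crossings $\tau_{X,X}$ (internal to $S_X$) and $\tau_{X^*,X}$ (the outer one). Tracing the strands, the cap $\ev_X$ pairs the $X$-leg produced by $\coev_X$ with the $X^*$-leg produced by $\ev_X^R$.

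The key step is a single zigzag: this configuration is precisely that of the snake identity $(\id_{X^*}\ot \ev_X)\circ(\coev_X \ot \id_{X^*})\cong \id_{X^*}$ for the coherent dual pair. Applying it straightens that strand, cancelling $\coev_X$ against $\ev_X$ while leaving the cup $\ev_X^R$ untouched, its $X^*$-leg now identified with the surviving output of $\coev_X$. What remains is the cup $\ev_X^R$ carrying two crossings of its two output legs. Since we work in a \emph{symmetric} monoidal bicategory, naturality of $\tau$ slides the inner crossing $\tau_{X,X}$ along the straightened strand until it becomes a crossing of the $X$- and $X^*$-legs, adjacent to the outer crossing $\tau_{X^*,X}$. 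These two transpose the same pair of strands in succession, hence are mutually inverse and cancel by the symmetry axiom, collapsing the diagram to $\ev_X^R$ and yielding the claimed 2-isomorphism.

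The only genuine work is bookkeeping, and this is where I expect the main obstacle: rigorously tracking the unitors and associators suppressed by the graphical notation, the naturality squares for $\tau$, and in particular confirming that the two braidings cancel with the correct orientation rather than leaving a residual symmetry. Equivalently, and perhaps more cleanly, one could bypass the explicit diagram chase by verifying that $\tau_{X^*,X}\circ(\id_{X^*}\ot S_X)\circ \coev_X$ is a right adjoint of $\ev_X$ — exhibiting a unit and counit and checking the triangle identities — and then invoking uniqueness of adjoints up to canonical 2-isomorphism; the content is the same, with the Serre automorphism precisely measuring the discrepancy between $\coev_X$ and the adjoint cup $\ev_X^R$.
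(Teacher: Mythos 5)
Your proposal is correct and takes essentially the same approach as the paper: the paper offers no written argument at all, stating only that the lemma ``is well-known \cite{Lurie09, piotr14}, and is straightforward to show graphically,'' and your string-diagram computation --- substituting the definition of $S_X$, cancelling $\coev_X$ against $\ev_X$ via the snake 2-cell of the dual pair, and then cancelling the two resulting crossings of the legs of $\ev_X^R$ by naturality and symmetry of $\tau$ --- is precisely that graphical verification, with the unitor/associator bookkeeping correctly flagged as the only remaining work. Your alternative route via uniqueness of right adjoints is equally valid and is in fact the technique the paper itself employs in the proofs of Lemmas \ref{lem:right-adjoint-dinatural} and \ref{lem:right-adjoint-ev-monoidal}.
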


Next, we show that the Serre automorphism is actually a pseudo-natural transformation of the
identity functor on the maximal subgroupoid of $\cC$, as suggested in \cite{schommer-pries2013-dual}. To the
best of our knowledge, a proof of this statement has not appeared in
the literature so far, hence we illustrate the details in the
following. We begin by showing that the evaluation 1-morphism is
\enquote{dinatural}.

\begin{lemma}
  \label{lem:ev-dinatural}
Let $X$ be dualizable in $\cC$. The evaluation 1-morphism $\ev_X$ is
  \enquote{dinatural}: for every 1-morphism $f:X \to Y$ between
  dualizable objects, there is a natural 2-isomorphism $\ev_f$ in the
  diagram below.
\begin{equation}
    \vcenter{\hbox{\includegraphics{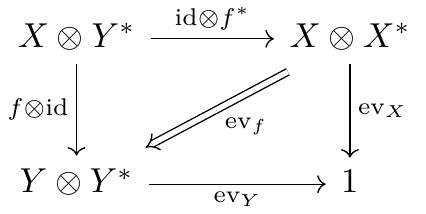}}}
  \end{equation}
By \enquote{di-naturality}, we explicitly mean that for every 2-morphism $\alpha:f \To g$ in $\cC$, the following diagram commutes
  \begin{equation}
  \label{eq:ev-f-dinatural}
   \vcenter{\hbox{\includegraphics[width=0.9\textwidth]{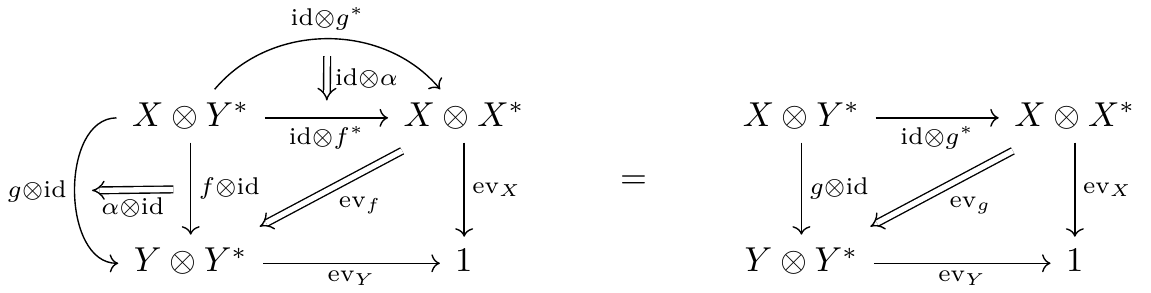}}}
  \end{equation}
    \end{lemma}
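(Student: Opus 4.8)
The plan is to produce the structural 2-isomorphism $\ev_f$ from the zig-zag identities of the dual pair on $X$, and then to verify the dinaturality square by a string-diagram (``yanking'') computation. First I would fix the transpose of $f\colon X\to Y$, namely the 1-morphism
\begin{equation}
f^* := (\id_{X^*}\ot\ev_Y)\circ(\id_{X^*}\ot f\ot\id_{Y^*})\circ(\coev_X\ot\id_{Y^*})\colon Y^*\to X^*,
\end{equation}
so that the two 1-morphisms $X\ot Y^*\to 1$ appearing in the diagram are $\ev_Y\circ(f\ot\id_{Y^*})$ and $\ev_X\circ(\id_X\ot f^*)$. Substituting the definition of $f^*$ into the latter produces a configuration in which the cap $\ev_X$ meets the cup $\coev_X$ along the $X$-strand; applying the zig-zag 2-isomorphism for the dual pair $(X,X^*,\ev_X,\coev_X)$ --- that is, the appropriate whiskered composite of the structural 2-cells of Definition \ref{def:dual-pair} --- collapses this zig-zag and yields exactly $\ev_Y\circ(f\ot\id_{Y^*})$. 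This composite of invertible 2-cells is the desired $\ev_f$.

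For the dinaturality statement, I would first note that a 2-morphism $\alpha\colon f\To g$ induces a transpose $\alpha^*\colon g^*\To f^*$ by whiskering $\alpha$ into the defining composite of $f^*$; since we work in the maximal subgroupoid, $\alpha$ is invertible, hence so is $\alpha^*$. The square of (\ref{eq:ev-f-dinatural}) then has horizontal edges given by whiskering $\ev_Y$ and $\ev_X$ with $\alpha$ and with (the inverse of) its transpose $\alpha^*$, and vertical edges $\ev_f$ and $\ev_g$. Its commutativity is exactly the assertion that $\alpha$ may be slid along the wire past the zig-zag 2-cell: because $\ev_f$ and $\ev_g$ are built only from the fixed dual-pair data of $(X,X^*)$ and $(Y,Y^*)$, the 2-cell $\alpha$ commutes past them. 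Concretely, I would prove commutativity by repeated application of the interchange law together with the naturality of the associators, unitors and braiding, reducing both paths around the square to the single pasting in which $\alpha$ is whiskered into the collapsed zig-zag.

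The step I expect to be the main obstacle is precisely this last verification. Graphically it is immediate --- sliding the box $\alpha$ along an $X$-wire through a cup--cap pair is a manifestly legal move, which is why the statement is \enquote{straightforward to show graphically} --- but a fully rigorous 2-categorical argument must track the coherence isomorphisms of the symmetric monoidal bicategory and confirm that each elementary slide is an instance of the interchange law or of naturality of a structural 2-cell. I would therefore carry out the bookkeeping in the wire-diagram calculus, where associators and unitors are suppressed by coherence, and only at the end translate back to confirm that no additional coherence obstruction appears.
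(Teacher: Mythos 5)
Your construction of $\ev_f$ --- writing out $f^*$ explicitly and collapsing the resulting cup--cap zig-zag with the structural 2-cells of the dual pair --- is exactly how the paper defines $\ev_f$, and your verification of the square in \eqref{eq:ev-f-dinatural} via the interchange law and naturality of the constituent coherence cells matches the paper's (terser) argument that $\ev_f$ is natural because it is a composite of natural 2-morphisms. The only superfluous element is your appeal to invertibility of $\alpha$: the lemma is stated for an arbitrary 2-morphism $\alpha\colon f \To g$, and neither the transpose $\alpha^*$ nor the commutativity of the square requires $\alpha$ to be invertible.
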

\begin{proof}
  We explicitly write out the definition of $f^*$ and define $\ev_f$
  to be the composition of the 2-morphisms in the diagram below.
  \begin{equation}
    \ev_f:= 
    \vcenter{\hbox{\includegraphics[width=0.9\textwidth]{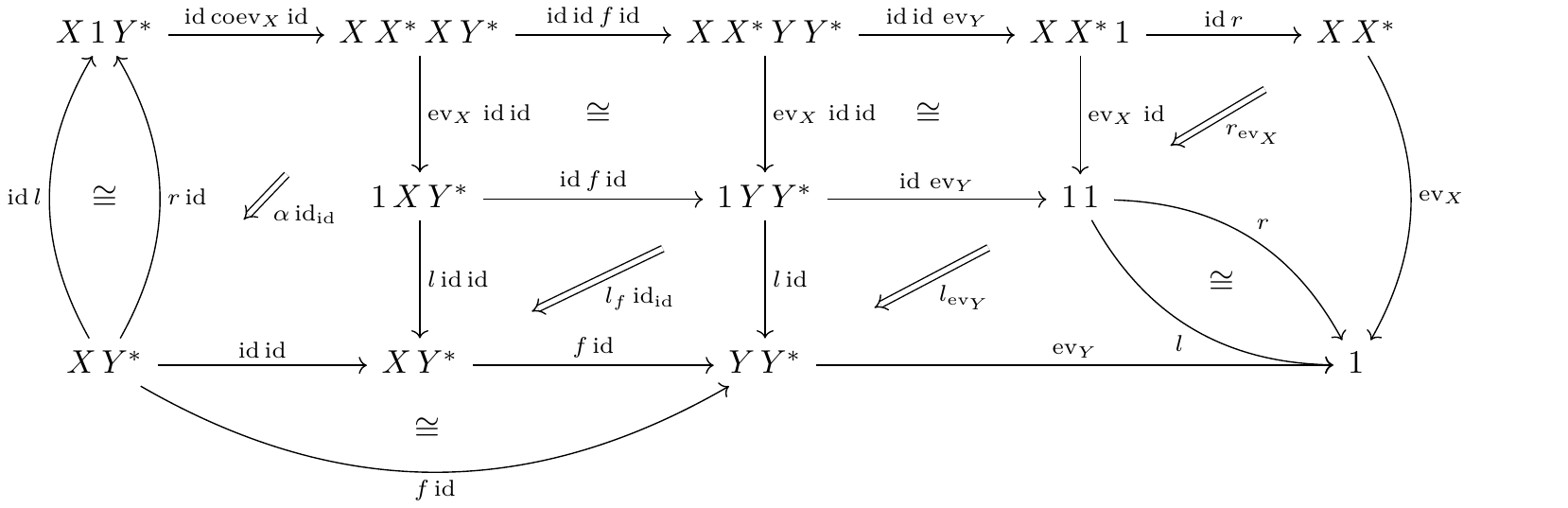}}}
  \end{equation}
  Since the 2-morphism $\ev_f$ is given by the composition of
  associators and unitors which are natural 2-morphisms, it is natural itself, and thus the diagram in equation \ref{eq:ev-f-dinatural} commutes.
\end{proof}
In order to show that the Serre automorphism is pseudo-natural, we also need to
show the dinaturality of the right adjoint of the evaluation.
\begin{lemma}
  \label{lem:right-adjoint-dinatural}
  For a fully-dualizable object $X$ of $\cC$, the right adjoint
  $\ev^R$ of the evaluation is \enquote{dinatural} with respect to
  1-equivalences: for every 1-equivalence $f:X \to Y$ between
  fully-dualizable objects, there is a natural 2-isomorphism $\ev^R_f$
  in the diagram below.
\begin{equation}
  \vcenter{\hbox{\includegraphics{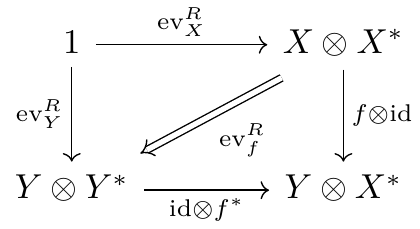}}}
\end{equation}
\end{lemma}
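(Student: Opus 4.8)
The plan is to obtain $\ev^R_f$ as the \emph{mate} of the 2-isomorphism $\ev_f$ from Lemma~\ref{lem:ev-dinatural}, by passing to right adjoints throughout. Recall that $\ev_f$ is an invertible 2-cell comparing the two composites $X \ot Y^* \to 1$ built from $\ev_X$ and $\ev_Y$ along $f$ and its dual $f^*$, schematically $\ev_f \colon \ev_X \circ (\id_X \ot f^*) \To \ev_Y \circ (f \ot \id_{Y^*})$. Since $X$ and $Y$ are fully-dualizable, $\ev_X$ and $\ev_Y$ admit right adjoints $\ev^R_X$ and $\ev^R_Y$; and since $f$ is a 1-equivalence, so is $f^*$, whence $\id_X \ot f^*$ and $f \ot \id_{Y^*}$ are 1-equivalences whose right adjoints are canonically $\id_X \ot (f^*)^{-1}$ and $f^{-1} \ot \id_{Y^*}$. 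Thus every 1-morphism occurring in $\ev_f$ has a right adjoint, and the mate can be formed.

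Concretely, I would define $\ev^R_f$ by whiskering $\ev_f$ with the units and counits of the adjunctions $\ev_X \dashv \ev^R_X$ and $\ev_Y \dashv \ev^R_Y$, that is, by bending the wires of the graphical expression for $\ev_f$ using the corresponding cups and caps. This yields a 2-cell between the right adjoints of the two composites above; rewriting those adjoints via $\id_X \ot (f^*)^{-1}$ and $f^{-1} \ot \id_{Y^*}$ produces precisely a 2-cell filling the diagram in the statement, between two 1-morphisms $1 \to X \ot Y^*$.

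The key point is invertibility. The assignment sending a right-adjunctible 1-morphism to a chosen right adjoint and a 2-cell to its mate is a contravariant functor on each hom-category, so in particular it preserves invertible 2-cells; the mate of $\ev_f^{-1}$ provides an explicit inverse to $\ev^R_f$. This is exactly why the statement is restricted to 1-equivalences: for a general 1-morphism $f$ the dual $f^*$ need not admit a right adjoint, so the mate could not even be formed, even though $\ev_f$ itself exists for arbitrary $f$ by Lemma~\ref{lem:ev-dinatural}.

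Finally, for naturality I would combine the functoriality of the mate construction with the naturality of $\ev_f$. Given a 2-morphism $\alpha \colon f \To g$, equation~(\ref{eq:ev-f-dinatural}) records the compatibility of $\ev_f$ with $\alpha$; applying the mate functor to that commuting diagram of invertible 2-cells transports it to the analogous commuting diagram for $\ev^R_f$, now phrased in terms of the 2-cell induced by $\alpha$ on the relevant right adjoints. I expect the main obstacle to be purely bookkeeping: keeping the several units, counits, associators and unitors in order while bending wires. Because all 2-cells in sight are invertible and the mate operation is functorial, no coherence beyond that already supplied by Lemma~\ref{lem:ev-dinatural} needs to be verified.
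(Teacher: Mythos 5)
Your proposal is correct and is, in substance, the paper's own proof: the paper also combines the dinaturality 2-cell of Lemma~\ref{lem:ev-dinatural} with the adjunction $(f^{-1}\ot f^{*}) \dashv (f\ot (f^{*})^{-1})$ supplied by the equivalence, showing that $(f\ot (f^{*})^{-1})\circ \ev_X^R$ is right adjoint to $\ev_X\circ (f^{-1}\ot f^{*})\cong \ev_Y$ and then invoking uniqueness of right adjoints --- which is exactly your mate construction, unpacked. The only difference is presentational: where you appeal to functoriality of the mate correspondence for existence and invertibility, the paper writes down the composite adjunction's unit and counit $\tilde\eta$, $\tilde\eps$ explicitly and checks the adjunction axioms by hand.
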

\begin{proof}
  In a first step, we show that $f \ot {(f^*)}^{-1} \circ \ev_X^R $ is
  a right-adjoint to $\ev_X \circ ( f^{-1} \ot f^*)$. In formula:
  \begin{equation}
    \label{eq:right-adjoint-dinatural}
    (  \ev_X \circ f^{-1} \ot f^*)^R= f \ot {(f^*)}^{-1}  \circ \ev_X^R.
  \end{equation}
  Indeed, let
  \begin{equation}
    \begin{aligned}
      \eta_X: \id_{X \ot X^*} & \to \ev_X^R \circ \ev_X \\
      \eps_X: \ev_X \circ \ev^R_X & \to \id_1
    \end{aligned}
  \end{equation}
  be the unit and counit of the right-adjunction of $\ev_X$ and its
  right adjoint $\ev_X^R$. We construct unit and counit for the
  adjunction in Equation \eqref{eq:right-adjoint-dinatural}. Let
  \begin{equation}
    \begin{aligned}
      \tilde \eps &: \ev_X \circ( f^{-1} \ot f^*) \circ( f \ot
      {(f^*)}^{-1} )\circ \ev_X^R \cong \ev_X \circ
      \ev_X^R \xrightarrow{\eps_X} \id_1  \\
      \tilde \eta& :\id_{Y \ot Y^*} \cong ( f \ot {(f^*)}^{-1}) \circ
      (f^{-1} \ot f^*) \xrightarrow{ \id * \eta_X * \id}(f \ot
      {(f^*)}^{-1})\circ \ev_X^R \circ \ev_X \circ( f^{-1} \ot f^*).
    \end{aligned}
  \end{equation}
  Now, one checks that the quadruple
  \begin{equation}
    ( \ev_X \circ (f^{-1} \ot f^*), \,  (f \ot {(f^*)}^{-1} ) \circ
    \ev_X^R , \, \tilde   \eps,  \tilde \eta) 
  \end{equation}
  fulfills indeed the axioms of an adjunction. This follows from the
  fact that the quadruple $(\ev_X , \ev_X^R ,\eps_X , \eta_X)$ is an
  adjunction. This shows Equation \eqref{eq:right-adjoint-dinatural}.

  Now, notice that due to the dinaturality of the evaluation in Lemma
  \ref{lem:ev-dinatural}, we have a natural 2-isomorphism
  \begin{equation}
    \ev_Y \cong \ev_X \circ (f^{-1} \ot f^*)  .
  \end{equation}
  Combining this 2-isomorphism with Equation
  \eqref{eq:right-adjoint-dinatural} shows that the right adjoint of
  $\ev_Y$ is given by $f \ot {(f^*)}^{-1} \circ \ev_X^R$.  Since all
  right-adjoints are isomorphic the 1-morphism $f \ot {(f^*)}^{-1}
  \circ \ev_X^R $ is isomorphic to $\ev_Y^R$, as desired.
\end{proof}

We can now prove the following proposition.
\begin{prop}
  \label{prop:serre-pseudo-natural-core-fd}
  Let $\cC$ be a symmetric monoidal bicategory. Denote by $\core{\cC}$
  the maximal sub-bigroupoid of $\cC$. The Serre automorphism $S$ is a
  pseudo-natural isomorphism of the identity functor on $\core{
    \cC^\fd}$.
\end{prop}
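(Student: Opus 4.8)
The plan is to show that the Serre automorphism $S$ assigns to each fully-dualizable object $X$ the 1-equivalence $S_X$ (already constructed in Definition \ref{def:serre-auto}), and to each 1-equivalence $f : X \to Y$ a naturality 2-isomorphism $S_f : S_Y \circ f \To f \circ S_X$ satisfying the coherence axioms of a pseudo-natural transformation. Since $S_X$ is built from $\ev_X$, $\ev_X^R$, the braiding $\tau$, and structural isomorphisms, the naturality 2-cell $S_f$ should be assembled from the dinaturality data already produced in the preceding lemmas. **First I would** invoke Lemma \ref{lem:ev-dinatural} to obtain the 2-isomorphism $\ev_f : \ev_Y \To \ev_X \circ (f^{-1} \ot f^*)$, and Lemma \ref{lem:right-adjoint-dinatural} to obtain $\ev^R_f : \ev^R_Y \To (f \ot {(f^*)}^{-1}) \circ \ev^R_X$, since $S_Y$ is expressed through $\ev_Y$ and $\ev^R_Y$.

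**Next I would** construct $S_f$ explicitly by pasting these 2-cells along the defining composite of $S_Y$. Writing out $S_Y \circ f$ and substituting the dinaturality isomorphisms for $\ev_Y^R$ and $\ev_Y$, one produces a composite in which the factors $f^{-1}, f^*, {(f^*)}^{-1}$ appear internally; the braiding $\tau_{X,X}$ must then be slid past $f$ using the naturality of the symmetry $\tau$, and the stray factors $f^* \circ {(f^*)}^{-1}$ and $f^{-1} \circ f$ cancel up to the canonical unit/counit 2-isomorphisms. The result is a 2-isomorphism $S_f : S_Y \circ f \To f \circ S_X$, and because every ingredient (the $\ev_f$, $\ev^R_f$, the associators, unitors, and the naturality square for $\tau$) is itself natural, $S_f$ is natural in 2-morphisms, i.e. for every 2-morphism $\alpha : f \To g$ the two ways of composing $S_\alpha$ with $S_f, S_g$ agree.

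**The key remaining steps** are to verify the two pseudo-naturality coherence axioms: the unit axiom, stating that $S_{\id_X}$ is the identity 2-cell up to the canonical unitors, and the composition axiom, stating that $S_{g \circ f}$ agrees with the appropriate horizontal pasting of $S_g$ and $S_f$ for composable 1-equivalences $f : X \to Y$, $g : Y \to Z$. I expect both to follow formally from the corresponding coherence properties of $\ev_f$ and $\ev^R_f$ together with the hexagon and naturality axioms for $\tau$, since $S_f$ is a pasting of these compatible pieces.

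**The main obstacle** is the bookkeeping in defining $S_f$: because the construction only applies to 1-\emph{equivalences} (so that $f^{-1}$ and ${(f^*)}^{-1}$ exist, as required by Lemma \ref{lem:right-adjoint-dinatural}), one must be careful that the passage from $f$ to its dual $f^*$ and to the inverses interacts correctly with the braiding, and that the various cancellations are implemented by genuinely coherent 2-isomorphisms rather than ad hoc ones. This is precisely why the statement is restricted to the maximal subgroupoid $\core{\cC^\fd}$, where all 1-morphisms are equivalences. I would verify the coherence graphically, using the wire-diagram calculus, where the pasting of $\ev_f$, $\ev^R_f$, and the braiding naturality becomes a sequence of isotopies of strings, making the coherence axioms transparent. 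Finally, the \emph{monoidality} of $S$ asserted in the introduction is a separate claim not part of this proposition, so here it suffices to establish pseudo-naturality.
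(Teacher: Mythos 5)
Your proposal is correct and matches the paper's own proof in all essentials: the paper likewise constructs the naturality 2-cell $S_f$ by decomposing the defining composite of the Serre automorphism and filling each piece, using the mates of the 2-cells from Lemma \ref{lem:ev-dinatural} and Lemma \ref{lem:right-adjoint-dinatural} (your substitute-and-cancel step with $f^{-1}$ and $(f^*)^{-1}$ is exactly this mate construction), together with the naturality of the braiding and the structural constraints of the symmetric monoidal bicategory. Your closing remarks --- that the restriction to $\core{\cC^\fd}$ is forced by Lemma \ref{lem:right-adjoint-dinatural} and that monoidality is a separate, subsequent claim --- are also consistent with the paper.
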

\begin{proof}
  Let $f: X \to Y$ be a 1-morphism in $\core{ \cC^\fd}$. We need
  to provide a natural 2-isomorphism in the diagram
  \begin{equation}
    \vcenter{\hbox{\includegraphics{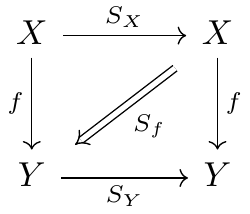}}}
  \end{equation}
  By spelling out the definition of the Serre automorphism, we see
  that this is equivalent to filling the following diagram with
  natural 2-cells:
  \begin{equation}
    \vcenter{\hbox{\includegraphics{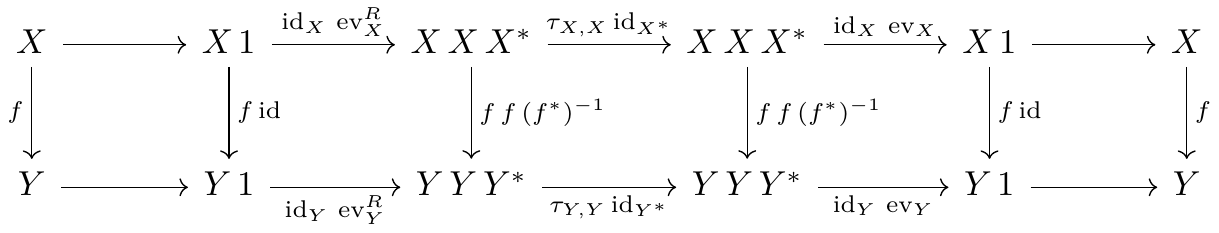}}}
  \end{equation}
  The first, the last and the middle square can be filled with a
  natural 2-cell due to the fact that $\cC$ is a symmetric monoidal
  bicategory. The square involving the evaluation commutes up to a
  2-cell using the mate of the 2-cell of Lemma \ref{lem:ev-dinatural},
  while the square involving the right adjoint of the evaluation
  commutes up a 2-cell using the mate of the 2-cell of Lemma
  \ref{lem:right-adjoint-dinatural}.
\end{proof}

\subsection{Monoidality of the Serre automorphism}
 In this section we
show that the Serre automorphism respects the
 monoidal structure. We
will show that the Serre-automorphism is a \emph{monoidal}
pseudo-natural transformation of the identity functor. We begin with
the following two lemmas:
\begin{lemma}
  \label{lem:tensor-product-dual}
 Let $\cC$ be a monoidal
  bicategory. Let $X$ and $Y$ be dualizable
 objects of $\cC$. Then,
  there is a 1-equivalence
  $ \xi_{X,Y}:(X \ot Y)^*
 \cong Y^* \ot X^*$. Furthermore, this
    1-equivalence $\xi$ is pseudo-natural: suppose that $f:X \to X'$
    and $g:Y \to Y'$ are two 1-morphisms in $\cC$. Then, there is a
    pseudo-natural 2-isomorphism in the diagram in equation
    \eqref{eq:xi-natural}.
  \begin{equation}
   \label{eq:xi-natural}
   \vcenter{\hbox{\includegraphics{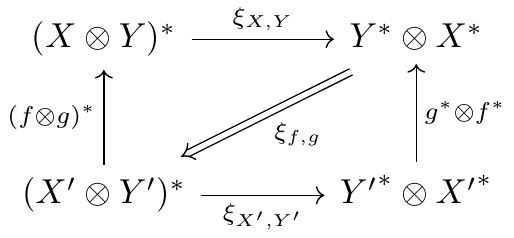}}}
 \end{equation}
\end{lemma}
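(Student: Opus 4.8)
The plan is to first produce the 1-equivalence $\xi_{X,Y}$ by exhibiting $Y^* \ot X^*$ as a dual of $X \ot Y$ and then invoking the uniqueness of duals recalled after Definition \ref{def:dual-pair}. Concretely, I would assemble evaluation and coevaluation 1-morphisms for the pair $(X \ot Y, Y^* \ot X^*)$ out of those for $X$ and $Y$ together with the associator and unitors of $\cC$, setting
\begin{equation*}
\widetilde{\ev}_{X,Y} \colon (X \ot Y) \ot (Y^* \ot X^*) \cong X \ot (Y \ot Y^*) \ot X^* \xrightarrow{\id_X \ot \ev_Y \ot \id_{X^*}} X \ot X^* \xrightarrow{\ev_X} 1
\end{equation*}
and dually
\begin{equation*}
\widetilde{\coev}_{X,Y} \colon 1 \xrightarrow{\coev_Y} Y^* \ot Y \cong Y^* \ot 1 \ot Y \xrightarrow{\id_{Y^*} \ot \coev_X \ot \id_Y} (Y^* \ot X^*) \ot (X \ot Y).
\end{equation*}
One then checks that this data completes to a dual pair, which by the remark following Definition \ref{def:dual-pair} may be taken coherent. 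Since any two coherent dual pairs over the same dualizable object are equivalent, comparing this pair with the given coherent dual pair over $X \ot Y$ produces the desired 1-equivalence $\xi_{X,Y} \colon (X \ot Y)^* \cong Y^* \ot X^*$.

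The verification that $\widetilde{\ev}_{X,Y}$ and $\widetilde{\coev}_{X,Y}$ satisfy the triangle identities reduces to the coherence of the monoidal bicategory $\cC$ together with the identities holding separately for the dual pairs of $X$ and of $Y$: after rewriting the relevant composites with associators and unitors, each swallowtail equation for $(X \ot Y, Y^* \ot X^*)$ decomposes into the swallowtail equation for $X$ tensored with the identity on the $Y$-strand, and vice versa. I would carry this out in the wire-diagram calculus, where $\widetilde{\ev}_{X,Y}$ and $\widetilde{\coev}_{X,Y}$ are nested cups and caps and the required identities are the planar isotopies straightening each strand independently.

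For pseudo-naturality, given $f \colon X \to X'$ and $g \colon Y \to Y'$ I must fill the square \eqref{eq:xi-natural} by a 2-isomorphism between $(g^* \ot f^*) \circ \xi_{X',Y'}$ and $\xi_{X,Y} \circ (f \ot g)^*$, both of which are 1-morphisms $(X' \ot Y')^* \to Y^* \ot X^*$. The key observation is that each composite is a transpose (mate) of $f \ot g \colon X \ot Y \to X' \ot Y'$: the right-hand side computes it using the given duals, while the left-hand side computes it using the constructed duals $Y^* \ot X^*$ and $(Y')^* \ot (X')^*$. Since transposition is natural in the chosen dual and $\xi$ is exactly the comparison 1-equivalence between the two choices, the two transposes are canonically 2-isomorphic; unwinding this with the dinaturality 2-cell $\ev_f$ of Lemma \ref{lem:ev-dinatural} on each strand produces the 2-cell $\xi_{f,g}$. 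In the wire-diagram calculus this is the planar isotopy sliding the boxes $f$ and $g$ past the crossing that represents $\xi$.

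The main obstacle I anticipate is not the construction of $\xi_{f,g}$ but its coherence: one must check that $\xi$ is genuinely pseudo-natural, i.e.\ that $\xi_{f,g}$ is compatible with horizontal composition of the pairs $(f,g)$ and natural with respect to 2-morphisms $f \To f'$ and $g \To g'$, just as the diagram \eqref{eq:ev-f-dinatural} records for $\ev_f$. Since $\xi_{f,g}$ is built entirely from associators, unitors and the structural 2-cells of $\cC$, all of which are natural and constrained by the coherence axioms of a monoidal bicategory, this should follow formally; the real work lies in organizing the many coherence isomorphisms so that the pasting diagrams can be verified, which is precisely what the wire-diagram calculus of \cite{bart14} is designed to manage.
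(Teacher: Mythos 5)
Your proposal is correct, but it reaches the 1-equivalence by a different route than the paper. The paper simply writes down two explicit zig-zag 1-morphisms
\begin{equation*}
(\id_{Y^*} \ot \id_{X^*} \ot \ev_{X \ot Y}) \circ (\id_{Y^*} \ot \coev_X \ot \id_Y \ot \id_{(X \ot Y)^*}) \circ (\coev_Y \ot \id_{(X \ot Y)^*})
\end{equation*}
and its counterpart built from $\coev_{X \ot Y}$, $\ev_Y$, $\ev_X$, and checks directly that they are mutually inverse up to invertible 2-cells; pseudo-naturality of $\xi_{f,g}$ is then read off from this explicit formula together with Lemma \ref{lem:ev-dinatural}. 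You instead exhibit $Y^* \ot X^*$ as a dual of $X \ot Y$ via nested cups and caps, upgrade it to a coherent dual pair using the remark after Definition \ref{def:dual-pair}, and obtain $\xi$ abstractly from the uniqueness of coherent dual pairs over a fixed object (the 2-groupoid statement imported from \cite{piotr14}). Your route buys invertibility of $\xi$ for free and avoids verifying the zig-zag 2-cells by hand, at the cost of invoking the heavier machinery of the dual-pair bicategory; when unwound, the comparison equivalence it produces is exactly the paper's explicit composite. The paper's route has the practical advantage that the explicit formulas for $\xi$ and $\xi^{-1}$ are what get used verbatim in the subsequent Lemmas \ref{lem:ev-monoidal} and \ref{lem:right-adjoint-ev-monoidal}, so with your approach you would still need to identify $\xi$ with the zig-zag composite before proceeding. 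For the naturality square both arguments ultimately rest on the same ingredient, the dinaturality 2-cell of Lemma \ref{lem:ev-dinatural}; your mate-theoretic phrasing is a clean way to organize it, provided you use the compatibility 2-cells (between $\xi$ and the two evaluation/coevaluation structures) that come packaged with the dual-pair equivalence.
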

\begin{proof}
  Define two 1-morphisms in $\cC$ by setting
  \begin{equation}
    ( \id_{Y^*} \ot \id_{X^*} \ot \ev_{X \ot Y}) \circ ( \id_{Y^*} \ot
    \coev_X \ot \id_Y \ot \id_{(X \ot Y)^*}) \circ (\coev_Y \ot \id_{(X
      \ot Y)^*}): (X \ot Y)^* \to Y^* \ot X^*
  \end{equation}
  \begin{equation}
    (\id_{(X \ot Y)^*} \ot \ev_X)  \circ (\id_{(X \ot Y)^*} \ot \id_X \ot
    \ev_Y \ot \id_{X^*})  \circ (\coev_{X \ot Y} \ot \id_Y^* \ot
    \id_{X^*}) : Y^* \ot X^* \to (X \ot Y)^*.
  \end{equation}
  These two 1-morphisms are (up to invertible 2-cells) inverse to each
  other. This shows the first claim. The existence and the
  pseudo-naturality of the 2-isomorphism $\xi_{f,g}$ now follows from
  the definition of $\xi$ and
  lemma \ref{lem:ev-dinatural}.
\end{proof}
Now, we show that the evaluation 1-morphism respects the monoidal
structure:
\begin{lemma}
  \label{lem:ev-monoidal}
 For a dualizable object $X$ of a
  symmetric monoidal bicategory
 $\cC$, the evaluation 1-morphism
  $\ev_X$ is a \emph{monoidal} pseudo-dinatural transformation: namely, the
  following diagram commutes up to 2-isomorphism.
 \begin{equation}
   \label{eq:lem-ev-1}
   \vcenter{\hbox{\includegraphics{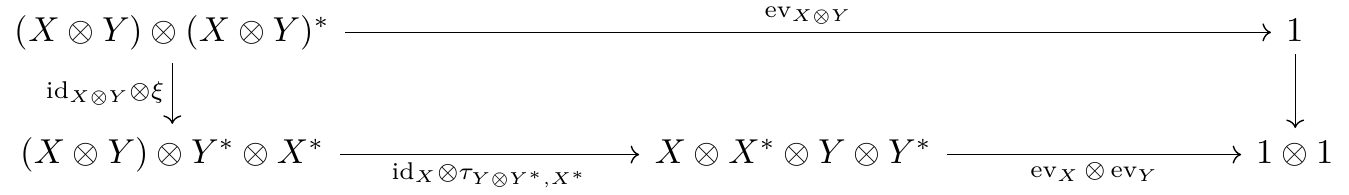}}}
 \end{equation}
  Here, the 1-equivalence $\xi$ is due to Lemma
  \ref{lem:tensor-product-dual}.
\end{lemma}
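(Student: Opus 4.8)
The statement amounts to constructing a single natural 2-isomorphism filling the square \eqref{eq:lem-ev-1}; spelling out the vertices, this is a 2-isomorphism
\[
  \ev_{X \ot Y} \cong \ev_X \circ (\id_X \ot \ev_Y \ot \id_{X^*}) \circ (\id_{X \ot Y} \ot \xi_{X,Y}),
\]
where the unnamed associators reassociate $(X \ot Y) \ot (Y^* \ot X^*)$ into $X \ot (Y \ot Y^*) \ot X^*$, so that $\ev_Y$ can act on the inner factor and $\ev_X$ on the outer one. The plan is to produce this 2-cell by unwinding the definition of $\xi_{X,Y}$ and then collapsing the resulting cups and caps with the zigzag identities of the dual pairs.

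First I would substitute the explicit formula for $\xi_{X,Y}$ from the proof of Lemma \ref{lem:tensor-product-dual} into the right-hand composite. This expresses the composite entirely in terms of $\ev_X$, $\ev_Y$, $\coev_X$, $\coev_Y$, $\ev_{X \ot Y}$, identities, and coherence cells. Drawing the result in the wire-diagram calculus of \cite{bart14}, the contributions of $\coev_X$ paired with $\ev_X$, and of $\coev_Y$ paired with $\ev_Y$, appear as cup–cap pairs along the wires for $X$ and $Y$.

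Next I would straighten each cup–cap pair using the defining invertible 2-cells $\alpha$ and $\beta$ of the corresponding dual pair (the zigzag, or triangle, identities). Each straightening contributes only an invertible 2-cell built from associators and unitors, and after performing both cancellations the composite reduces to $\ev_{X \ot Y}$ precomposed with a 2-isomorphism assembled entirely from coherence data. Since associators and unitors are natural, the 2-cell so obtained is natural, which is exactly the pseudo-dinaturality compatible with $\ot$ asserted by the lemma.

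The main obstacle is bookkeeping: keeping track of the many associators and unitors introduced during reassociation, and verifying that the leftover coherence 2-cell is canonical rather than some noncanonical automorphism of the monoidal unit. This is precisely where the graphical calculus earns its keep, since the cups and caps straighten visibly, and where the coherence theorem for symmetric monoidal bicategories guarantees that any two parallel coherence 2-cells agree, so that no genuine obstruction remains.
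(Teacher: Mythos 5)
Your proposal is correct and takes essentially the same approach as the paper: the paper's proof is precisely the pasting diagram (Figure~\ref{fig:ev-monoidal}) obtained by substituting the definition of $\xi_{X,Y}$ into the composite and filling it with the dual-pair 2-cells $\alpha$, $\beta$ and coherence isomorphisms, which is exactly the cup--cap cancellation you describe. The only cosmetic difference is that the paper's composite moves $\ev_Y$ past $X^*$ via the braiding $\tau_{Y \ot Y^*, X^*}$ rather than nesting $\ev_Y$ inside $\ev_X$, and these two forms agree up to naturality of the braiding and unit coherence.
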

\begin{proof}
Let us construct a 2-isomorphism in the diagram in Equation \eqref{eq:lem-ev-1}.
  Consider the diagram in figure \ref{fig:ev-monoidal} on page
  \pageref{fig:ev-monoidal}: here, the composition of the horizontal
  arrows at the top, together with the two arrows on the vertical
  right are exactly the 1-morphism in Equation
  \eqref{eq:lem-ev-1}. The other arrow is given by $\ev_{X \ot Y}$. We
  have not written down the tensor product, and left out isomorphisms
  of the form $1 \ot X \cong X \cong X \ot 1$ for readability.
\end{proof}
We can now establish the monoidality of the right adjoint of the
evaluation via the following lemma.
\begin{lemma}
  \label{lem:right-adjoint-ev-monoidal}
  Let $\cC$ a symmetric monoidal bicategory, and let $X$ and $Y$ be
  fully-dualizable objects. Then, the right adjoint of the evaluation
  is monoidal. More precisely: if $\xi: (X \ot Y)^* \to Y^* \ot X^*$
  is the 1-equivalence of Lemma \ref{lem:tensor-product-dual}, the
  following diagram commutes up to 2-isomorphism.
  \begin{equation}
    \vcenter{\hbox{\includegraphics{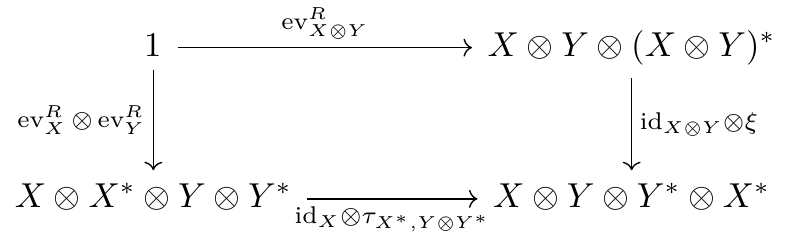}}}
  \end{equation}
\end{lemma}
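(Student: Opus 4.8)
The plan is to deduce the monoidality of $\ev^R$ from the monoidality of $\ev$ established in Lemma \ref{lem:ev-monoidal} by passing to right adjoints throughout the diagram. The key structural fact I would use is that taking right adjoints is compatible both with composition and with 2-isomorphisms: in any bicategory, whenever two parallel 1-morphisms $f$ and $g$ admit right adjoints, a 2-isomorphism $f \cong g$ induces a canonical 2-isomorphism $f^R \cong g^R$ of the right adjoints (because right adjoints are unique up to unique 2-isomorphism, and an adjunction transports along a 2-isomorphism of the left adjoint), and the right adjoint of a composite satisfies $(g \circ f)^R \cong f^R \circ g^R$. Since $X$, $Y$ and $X \ot Y$ are fully-dualizable, all the evaluation 1-morphisms occurring in the diagram admit right adjoints, so these operations are available to us.

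First I would start from the 2-isomorphism of Lemma \ref{lem:ev-monoidal}, which exhibits $\ev_{X \ot Y}$ as 2-isomorphic to the composite built from $\ev_X$, $\ev_Y$ and the 1-equivalence $\xi$ of Lemma \ref{lem:tensor-product-dual}. Applying the right-adjoint operation to both sides of this 2-isomorphism immediately yields a 2-isomorphism $\ev_{X \ot Y}^R \cong (\,\ev_X,\ev_Y,\xi\text{-composite}\,)^R$, reducing the problem to identifying the right adjoint of that composite.

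Next I would compute this right adjoint explicitly by repeatedly applying $(g \circ f)^R \cong f^R \circ g^R$, which reverses the order of the factors and replaces each evaluation by its right adjoint $\ev_X^R$, respectively $\ev_Y^R$. The factor $\xi$ requires separate treatment: since $\xi$ is a 1-equivalence, I may upgrade it to an adjoint equivalence, in which case its right adjoint is 2-isomorphic to its quasi-inverse $\xi^{-1}$; this supplies the occurrence of $\xi$ in the required orientation inside the target diagram. Assembling these identifications shows that $\ev_{X \ot Y}^R$ is 2-isomorphic to the prescribed composite of $\ev_X^R$, $\ev_Y^R$ and $\xi$, which is exactly the commutativity up to 2-isomorphism asserted in the statement.

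I expect the main obstacle to be purely a matter of bookkeeping: tracking the placement and variance of each factor after the reversal induced by $(-)^R$, and in particular checking that the right adjoint of $\xi$ lands in the correct slot of the diagram with the correct orientation, so that the resulting composite agrees on the nose with the one drawn. Since the statement only asks for the existence of a filling 2-isomorphism, and not for any coherence or triangle identities satisfied by it, no further compatibility beyond the uniqueness of right adjoints needs to be verified.
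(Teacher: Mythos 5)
Your proposal is correct, and in substance it follows the same strategy as the paper: both arguments hinge on the 2-isomorphism of Lemma \ref{lem:ev-monoidal} together with uniqueness of right adjoints up to 2-isomorphism, and both reduce the problem to identifying the right adjoint of the composite built from $\ev_X \ot \ev_Y$, the braiding, and $\xi$. Where you differ is in how that identification is carried out. The paper does it in a single stroke: it writes down explicit candidate unit and counit $\tilde\eta$, $\tilde\eps$ for an adjunction between $(\ev_X \ot \ev_Y) \circ (\id_X \ot \tau_{Y \ot Y^*,X^*}) \circ (\id_{X \ot Y} \ot \xi)$ and $(\id_{X \ot Y} \ot \xi^{-1}) \circ (\id_X \ot \tau_{X^*, Y \ot Y^*}) \circ (\ev_X^R \ot \ev_Y^R)$, assembled from $\eta_X \ot \eta_Y$, $\eps_X \ot \eps_Y$ and the equivalence data of $\xi$ and $\tau$, and then verifies the adjunction axioms once for the whole composite. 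You instead assemble the same 2-isomorphism modularly, by iterating the general facts that $(g \circ f)^R \cong f^R \circ g^R$ and that the right adjoint of an adjoint equivalence is its quasi-inverse. Your route is more reusable; the paper's is more self-contained, since it never invokes the general composition lemma but only the two given adjunctions. Two bookkeeping points must be made explicit for your plan to close. First, compositionality and transport along 2-isomorphisms alone do not suffice: the factors of the composite are tensor products ($\ev_X \ot \ev_Y$, $\id_{X\ot Y} \ot \xi$, and so on), so you also need the standard compatibility of right adjoints with the monoidal structure, $(f \ot g)^R \cong f^R \ot g^R$, which holds because tensoring is a pseudofunctor and hence preserves adjunctions; this fact is used silently when you say each evaluation gets replaced by its right adjoint, but it is not among the tools you list. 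Second, the braiding factor $\id_X \ot \tau$ requires exactly the same adjoint-equivalence treatment you reserve for $\xi$, so that it too reappears, suitably inverted up to 2-isomorphism, in the correct slot of the target diagram.
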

\begin{proof}
  In a first step, we show that the right adjoint of the 1-morphism
  \begin{equation}
    \label{eq:evr-r-1}
    (\ev_X \ot \ev_Y) \circ (\id_X \ot \tau_{Y \ot Y^*, X^*}) \circ
    (\id_{X \ot Y}\ot \xi)
  \end{equation}
  is given by the 1-morphism
  \begin{equation}
    \label{eq:evr-r-2}
    (\id_{X \ot Y} \ot \xi^{-1}) \circ (\id_X \circ \tau_{X^*, Y \ot
      Y^*}) \circ (\ev^R_X \ot \ev_Y^R).
  \end{equation}
  Indeed, if
  \begin{equation}
    \begin{aligned}
      \eta_X: \id_{X \ot X^*} & \to \ev_X^R \circ \ev_X \\
      \eps_X: \ev_X \circ \ev^R_X & \to \id_1
    \end{aligned}
  \end{equation}
  are the unit and counit of the right-adjunction of $\ev_X$ and its
  right adjoint $\ev_X^R$, we construct adjunction data for the
  adjunction in equations \eqref{eq:evr-r-1} and \eqref{eq:evr-r-2} as
  follows. Let $\tilde \eps$ and $\tilde \eta$ be the following
  2-morphisms:
  \begin{align}
    \begin{aligned}
      &\tilde \eps: \! \begin{multlined}[t] ( \ev_X \ot \ev_Y) \circ
        (\id_X \ot \tau_{Y \ot Y^*, X^*}) \circ (\id_{X \ot Y} \ot
        \xi) \circ (\id_{X \ot Y} \ot
        \xi^{-1}) \\
        \circ (\id_X \ot \tau_{X^*, Y \ot Y^*}) \circ (\ev_X^R \ot
        \ev_Y^R)
      \end{multlined}\\
      & \cong ( \ev_X \ot \ev_Y) \circ (\id_X \ot \tau_{Y \ot Y^*,
        X^*}) \circ (\id_X \ot \tau_{X^*, Y \ot Y^*}) \circ (\ev_X^R
      \ot \ev_Y^R)  \\
      & \cong ( \ev_X \ot \ev_Y) \circ (\ev_X^R \ot \ev_Y^R)
      \xrightarrow{\eps_X \ot \eps_Y} \id_1
    \end{aligned}
  \end{align}
  and
  \begin{align}
    \begin{aligned}
      \tilde \eta : \id_{X \ot Y \ot (X \ot Y)^*} &\cong (\id_{X \ot
        Y} \ot
      \xi^{-1}) \circ  (\id_{X \ot Y} \ot \xi) \\
      & \cong (\id_{X \ot Y} \ot \xi^{-1}) \circ (\id_X \ot \tau_{X^*,
        Y \ot Y^*}) \circ (\id_X \ot \tau_{Y \ot Y^*,
        X^*}) \circ (\id_{X \ot Y} \ot \xi)  \\
      & \xrightarrow{\id \ot \eta_X \ot \eta_Y \ot \id}
      \! \begin{multlined}[t] (\id_{X \ot Y} \ot \xi^{-1}) \circ
        (\id_X \ot \tau_{X^*, Y \ot Y^*}) \circ (\ev_X^R \ot \ev_Y^R)
        \\ \circ ( \ev_X \ot \ev_Y) \circ (\id_X \ot \tau_{Y \ot Y^*,
          X^*}) \circ (\id_{X \ot Y} \ot \xi)
      \end{multlined}
    \end{aligned}
  \end{align}
  One now shows that the two 1-morphisms in Equation
  \eqref{eq:evr-r-1} and \eqref{eq:evr-r-2}, together with the two
  2-morphisms $\tilde \eps$ and $\tilde \eta$ form an adjunction. This
  gives that the two 1-morphisms in Equations \eqref{eq:evr-r-1} and
  \eqref{eq:evr-r-2} are adjoint.

  Next, notice that the 1-morphism in Equation \eqref{eq:evr-r-1} is
  isomorphic to the 1-morphism $\ev_{X \ot Y}$ by Lemma
  \ref{lem:ev-monoidal}. Thus, the right adjoint of $\ev_{X \ot Y}$ is
  given by the right adjoint of the 1-morphism in Equation
  \eqref{eq:evr-r-1}, which is the 1-morphism in Equation
  \eqref{eq:evr-r-2} by the argument above. Since all adjoints are
  equivalent, this shows the lemma.
\end{proof}
We are now ready to prove that the Serre automorphism is a
\emph{monoidal} pseudo-natural transformation.
\begin{prop}
  Let $\cC$ be a symmetric monoidal bicategory. Then, the Serre
  automorphism is a \emph{monoidal} pseudo-natural transformation of
  $\Id_{\core{\cC^\fd}}$.
\end{prop}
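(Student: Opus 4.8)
The plan is to produce the additional data exhibiting the pseudo-natural isomorphism $S$ of Proposition \ref{prop:serre-pseudo-natural-core-fd} as a monoidal one. Concretely, a monoidal pseudo-natural transformation of $\Id_{\core{\cC^\fd}}$ consists of an invertible modification $\Pi_{X,Y}\colon S_{X\ot Y}\To S_X\ot S_Y$ together with an invertible 2-cell $S_1\cong\id_1$, subject to two coherence axioms. Since the identity functor is (strictly) monoidal, the tensorator data that would ordinarily intervene is trivial, so the whole problem reduces to building $\Pi_{X,Y}$ and $S_1\cong\id_1$ and checking these axioms.

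To construct $\Pi_{X,Y}$, first write out $S_{X\ot Y}$ from Definition \ref{def:serre-auto} as the composite of $\id_{X\ot Y}\ot\ev^R_{X\ot Y}$, then $\tau_{X\ot Y,X\ot Y}\ot\id_{(X\ot Y)^*}$, then $\id_{X\ot Y}\ot\ev_{X\ot Y}$. The two monoidality lemmas carry out most of the work: Lemma \ref{lem:ev-monoidal} replaces $\ev_{X\ot Y}$, through the equivalence $\xi_{X,Y}\colon (X\ot Y)^*\cong Y^*\ot X^*$, by a composite built from $\ev_X$ and $\ev_Y$, and Lemma \ref{lem:right-adjoint-ev-monoidal} replaces $\ev^R_{X\ot Y}$, again through $\xi_{X,Y}$, by a composite built from $\ev^R_X$ and $\ev^R_Y$. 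In the resulting expression the occurrences of $\xi$ and $\xi^{-1}$ act on the dual factor $(X\ot Y)^*$, while the middle braiding acts only on the two copies of $X\ot Y$; by the interchange law they may be slid together and cancelled, leaving a composite involving only $\ev_X,\ev_Y,\ev^R_X,\ev^R_Y$, coherence constraints, and braidings.

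What remains is a braiding computation, which I expect to be the main obstacle. Using the hexagon axioms I would expand $\tau_{X\ot Y,X\ot Y}$ into the four braidings $\tau_{X,X},\tau_{X,Y},\tau_{Y,X},\tau_{Y,Y}$; combining these with the reorganising braidings produced by the two lemmas and invoking the symmetry relation $\tau_{Y,X}\circ\tau_{X,Y}\cong\id$, the two mixed braidings cancel, and after rearranging the coherence cells one is left precisely with the interleaving of the defining composites of $S_X$ and $S_Y$, that is, with $S_X\ot S_Y$. This yields the invertible 2-cell $\Pi_{X,Y}$; because it is assembled from coherence 2-cells together with the pseudo-natural 2-cells of Lemmas \ref{lem:ev-monoidal} and \ref{lem:right-adjoint-ev-monoidal}, its naturality in $X$ and $Y$ is automatic. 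The cleanest way to manage the cancellation of the mixed braidings is graphically, in the wire-diagram calculus, where both the hexagon relations and the symmetry are transparent.

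Finally, the unit comparison $S_1\cong\id_1$ follows because $1$ is its own dual with $\ev_1,\ev^R_1$ the canonical unit isomorphisms, so each factor in the definition of $S_1$ is coherently trivial. The two coherence axioms relating $\Pi$ to the associativity and unit constraints of $\cC$ are then equalities of pasting composites of coherence 2-cells; I would verify them by appealing to the coherence theorem for symmetric monoidal bicategories (equivalently, to the naturality of all 2-cells involved) rather than by an explicit pasting calculation.
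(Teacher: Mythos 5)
Your proposal takes essentially the same route as the paper's proof: you build $\Pi_{X,Y}$ from Lemma \ref{lem:ev-monoidal} and Lemma \ref{lem:right-adjoint-ev-monoidal} (with the cancellation of the $\xi$'s and the braiding bookkeeping that the paper compresses into ``the braiding $\tau$ will be monoidal by definition''), obtain the unit cell $S_1 \cong \id_1$ from $1 \cong 1^*$, and then dispose of the coherence equations. The only difference is one of verification style — the paper checks the three coherence equations ``directly by hand'' where you invoke coherence and naturality of the 2-cells involved — which is not a substantive divergence.
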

\begin{proof}
  By definition (cf. \cite[Definition
  2.7]{schommerpries-classification}), we have to provide invertible
  2-cells
  \begin{equation}
    \begin{aligned}
      \Pi_{X,Y} &: S_{X \ot Y} \to S_X \ot S_Y \\
      M & :  S_1 \to \id_1,
    \end{aligned}
  \end{equation}
  satisfying suitable coherence equations.
  By the definition of the Serre automorphism in Definition
  \ref{def:serre-auto}, it suffices to show that the evaluation and
  its right adjoint are monoidal, since the braiding $\tau$ will be
  monoidal by definition. The monoidality of the evaluation is proven
  in Lemma \ref{lem:ev-monoidal}, while the monoidality of its right
  adjoint follows from Lemma
  \ref{lem:right-adjoint-ev-monoidal}. These two lemmas thus provide
  an invertible 2-cell $S_{X \ot Y} \cong S_X \ot S_Y$. The second
  2-cell $\id_1 \to S_1$ can be constructed in a similar way, by
  noticing that $1 \cong 1^*$.

  The three coherence equations for a pseudo-natural transformation
  now read
  \begin{equation}
    \begin{aligned}
      \Pi_{X \ot Y, Z} \circ (\Pi_{X \ot Y} \ot \id_{S_Z}) &= \Pi_{X, Y
        \ot Z} \circ (\id_{S_X} \ot \Pi_{Y,Z}) \\
      \Pi_{1,X} &= M \ot \id_{S_X} \\
      \Pi_{X,1} & = \id_{S_X} \ot M
    \end{aligned}
  \end{equation}
  and can be checked directly by hand.
\end{proof}

\section{Monoidal homotopy actions}
\label{sec:triviality-actions}
In this section, we investigate homotopy actions on symmetric monoidal
bicategories. In particular, we are interested in the case when the
group action is compatible with the monoidal structure.  By a
(homotopy) action of a topological group $G$ on a bicategory $\cC$, we
mean a weak monoidal 2-functor $\rho: \Pi_2(G) \to \Aut(\cC)$, where
$\Pi_2(G)$ is the fundamental 2-groupoid of $G$, and $\Aut(\cC)$ is
the bicategory of auto-equivalences of $\cC$.  For details on homotopy
actions of groups on bicategories, we
refer the reader to \cite{hsv16}. \\
In order to simplify the exposition, we introduce the following
\begin{newdef}
  Let $G$ be a topological group. We will say that $G$ is 2-truncated
  if $\pi_2(G,x)$ is trivial for every base point $x \in G$.
\end{newdef}
Moreover, we will need also the following definition.
\begin{newdef}
  Let $\cC$ be a symmetric monoidal bicategory. We will say that $\cC$
  is algebraically 1-connected if it is monoidally equivalent to $B^{2}H$, for some abelian group $H$.
\end{newdef}
In the following, we denote by $\Aut_\ot(\cC)$ the monoidal bicategory of
invertible monoidal weak 2-functors of $\cC$, invertible monoidal
pseudo-natural transformations, and invertible monoidal
modifications. Details of the construction can be found in
\cite[Appendix A]{hesse-thesis}.
\begin{newdef}
  Let $\cC$ be a symmetric monoidal category and $G$ be a topological
  group. A \emph{monoidal} homotopy action of $G$ on $\cC$ is a
  monoidal morphism $\rho: \Pi_2(G) \to \Aut_\ot(\cC)$.
\end{newdef}

We now prove a general criterion for when monoidal homotopy actions
are trivializable.
\begin{prop}
  \label{prop:action-trivializable}
  Let $\cC$ be a symmetric monoidal bicategory, and let $G$ be a path
  connected topological group. Assume that $G$ is 2-truncated, and that  $\Aut_\ot(\cC)$ is algebraically 1-connected, with abelian group $H$. If
  $H^{2}_{grp}(\pi_{1}(G,e), H)\simeq 0$, then any monoidal homotopy
  action of $G$ on $\cC$ is pseudo-naturally isomorphic to the trivial
  action.
\end{prop}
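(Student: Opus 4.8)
The plan is to reduce the assertion to a computation in ordinary group cohomology, by replacing both the source and the target of $\rho$ with skeletal models and then showing that the isomorphism class of $\rho$ is measured by a single class in $H^2_{grp}(\pi_1(G,e),H)$. First I would simplify the target. Since $\Aut_\ot(\cC)$ is algebraically $1$-connected, fix a monoidal equivalence $\Aut_\ot(\cC) \simeq B^2 H$. Being pseudo-naturally isomorphic to the trivial action is invariant under post-composing $\rho$ with a monoidal equivalence of the target, so it suffices to treat a monoidal morphism $\rho : \Pi_2(G) \to B^2 H$.

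Next I would produce a skeletal model of the source. Because $G$ is path connected, every object of $\Pi_2(G)$ is equivalent to the basepoint $e$, so up to equivalence $\rho$ is determined by its restriction to the endomorphisms of $e$. Because $G$ is $2$-truncated we have $\pi_2(G,e)=0$, so a loop has no nontrivial $2$-automorphisms and any two homotopic loops are joined by a unique $2$-cell; hence the endomorphisms of $e$ form a monoidal groupoid equivalent to the discrete group $\Gamma := \pi_1(G,e)$, which is abelian since $G$ is a topological group. Thus $\Pi_2(G)$ is equivalent, as a monoidal bicategory, to $B\Gamma$, and I would transport $\rho$ along this equivalence.

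Now I would read off the data of $\rho : B\Gamma \to B^2 H$. As $B^2 H$ has a single object and a single $1$-morphism, $\rho$ sends the unique object to the unique object and every $g \in \Gamma$ to the identity $1$-morphism; the only essential datum is the compositor $\rho(g)\circ\rho(h) \To \rho(gh)$, which is an element $\phi(g,h)\in H$. The coherence axiom for the compositor of a $2$-functor is precisely the normalized $2$-cocycle identity, and since all $1$-morphisms are sent to identities there is no induced action of $\Gamma$ on $H$; hence $\phi \in Z^2_{grp}(\Gamma,H)$ with trivial coefficients. A pseudo-natural isomorphism $\rho \To \rho'$ has trivial $1$-morphism components, so it is given by its naturality $2$-cells, an assignment $\psi : \Gamma \to H$, and the pseudo-naturality axiom forces $\phi - \phi' = d\psi$. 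Therefore the class $[\phi] \in H^2_{grp}(\Gamma,H)$ is a complete invariant of $\rho$ up to pseudo-natural isomorphism, and the trivial action corresponds to $[\phi]=0$.

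By hypothesis $H^2_{grp}(\pi_1(G,e),H)\simeq 0$, so $\phi = d\psi$ for some $\psi:\Gamma\to H$, and $\psi$ assembles into the desired pseudo-natural isomorphism from $\rho$ to the trivial action. I expect the main obstacle to be the bookkeeping of the third paragraph: extracting the $2$-cocycle and the coboundary rigorously from the full weak-monoidal-$2$-functor coherence data — in particular checking that the monoidal compositor of $\rho$ contributes no obstruction beyond $[\phi]$, so that the problem genuinely collapses to group cohomology, and verifying that the equivalence $\Pi_2(G)\simeq B\Gamma$ respects the monoidal structures so that the classification transports faithfully.
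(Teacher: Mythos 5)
You follow the same strategy as the paper's proof: replace the target by $B^2H$ using algebraic 1-connectedness, replace the source by $B\pi_1(G,e)$ using path-connectedness and 2-truncation, and then argue that the resulting monoidal 2-functors $B\pi_1(G,e)\to B^2H$ are governed by a group 2-cocycle valued in $H$, with pseudo-natural isomorphisms acting as coboundaries, so that $H^2_{grp}(\pi_1(G,e),H)\simeq 0$ forces triviality. The one substantive difference is where the cocycle is located. You extract it from the compositor $\rho(g)\circ\rho(h)\To\rho(gh)$ of the \emph{underlying} weak 2-functor and defer the monoidal data to a final check; the paper does precisely the complementary thing: it declares the underlying weak 2-functor trivial (the assignments on objects, 1-morphisms and 2-morphisms being forced) and extracts the cocycle from the 2-dimensional components $\chi_{\gamma,\gamma'}$ of the monoidal compositor $\chi_{a,b}\colon F(a)\ot F(b)\to F(a\ot b)$, with \emph{monoidal} pseudo-natural transformations supplying the coboundaries. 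Since composition and tensor of 1-morphisms in $B\pi_1(G,e)$ agree by an Eckmann--Hilton argument, these are two bookkeeping choices for the same invariant, and each argument leaves the mirror-image verification implicit: the paper never discusses the composition compositor, you never analyze $\chi$. The only place the difference matters is the strength of the conclusion: the paper's coboundaries are monoidal pseudo-natural isomorphisms, so its trivialization is monoidal, whereas your $\psi$ a priori assembles only into a plain pseudo-natural isomorphism of underlying functors; upgrading it to a monoidal one is exactly the ``main obstacle'' you flag, and it is no harder than the check the paper itself omits.
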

\begin{proof}
  Let $\rho: \Pi_2(G) \to \Aut_\ot(\cC)$ be a weak monoidal 2-functor.
  Since $\Aut_\ot(\cC)$ was assumed to be monoidally equivalent to
  $B^2H$ for some abelian group $H$, the group action $\rho$ is
  equivalent to a weak monoidal 2-functor $\rho: \Pi_2(G) \to
  B^2H$. Due to the fact that $G$ is path connected and 2-truncated,
  we have that $\Pi_2(G)\simeq B\pi_{1}(G, e)$, where $\pi_{1}(G, e)$
  is regarded as a discrete monoidal category. Thus, the monoidal
  homotopy action $\rho$ is monoidally equivalent to a weak monoidal
  2-functor $B\pi_{1}(G, e)\to{B^{2}H}$.

  We claim that such functors are classified by
  $H^2_{grp}(\pi_1(G,e), H)$ up to monoidal pseudo-natural isomorphism. Indeed,
  let $F:B\pi_1(G,e) \to B^2 H$ be a weak monoidal 2-functor. It is
  easy to see that $F$ is trivial as a weak 2-functor, since we must
  have $F(*)=*$ on objects, $F(\gamma) = \id_*$ on 1-morphisms, and
  $B \pi_1(G)$ only has identity 2-morphisms. Thus, the only
  non-trivial data of $F$ can come from the monoidal structure on $F$.
  The 1-dimensional components of the pseudo-natural transformations
  $\chi_{a,b}: F(a) \ot F(b) \to F(a \ot b)$ must be trivial since
  there are only identity 1-morphisms in $B^2H$. The 2-dimensional
  components of this pseudo-natural transformation consists of a
  2-morphism $\chi_{\gamma, \gamma'}$ in $B^2H$ for every pair of
  1-morphisms $\gamma:a \to b$ and $\gamma': a' \to b'$ in
  $B \pi_1(G)$ in the diagram in equation
  \eqref{eq:non-trivial-cocycle} below.
  \begin{equation}
    \label{eq:non-trivial-cocycle}
      \vcenter{\hbox{\includegraphics{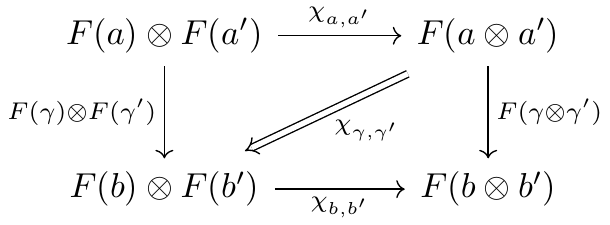}}}
   \end{equation}
Hence, we obtain a 2-cochain $\pi_1(G) \times \pi_1(G) \to H$, which
obeys the cocycle condition due to the coherence equations of a
monoidal 2-functor, cf. \cite[Definition 2.5]{schommerpries-classification}.

One now checks that a monoidal pseudo-natural transformation between
two such functors is exactly a 2-coboundary, which shows the
claim. Since we assumed that $H^2_{grp}(\pi_1(G,e), H) \simeq 0$, the
original action $\rho$ must be trivializable.
\end{proof}
Next, we show that the bicategory $\Alg_2^\fd$ of finite-dimensional,
semi-simple algebras, bimodules and intertwiners, equipped with the
monoidal structure given by the \emph{direct sum} fulfills the
conditions of Proposition \ref{prop:action-trivializable}.
\begin{lemma}
\label{lem:algdsum}
Let $\K$ be an algebraically closed field. Let $\cC=\Alg_2^\fd$ be the
bicategory where objects are given by finite-dimensional, semi-simple
algebras, equipped with the monoidal structure given by the direct
sum. By viewing $\cC$ with the monoidal structure equipped by
  the direct sum, $\cC$ turns into a \emph{linear} bicategory. Then, $\Aut_\ot(\cC)$ and $B^2\K^*$ are equivalent as
  symmetric monoidal bicategories.
\end{lemma}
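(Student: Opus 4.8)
The plan is to exhibit $\Aut_\ot(\cC)$ as a bicategory with, up to equivalence, a single object, a single $1$-morphism, and exactly $\K^*$ worth of $2$-morphisms, that is, to identify it with $B^2\K^*$. The structural fact I would use throughout is that, with the direct-sum monoidal product, $\cC$ is monoidally generated by the single object $\K$: over the algebraically closed field $\K$, Artin--Wedderburn together with Morita invariance shows that every object is equivalent to a finite direct sum $\K^{\oplus n}$, the monoidal unit is the zero algebra, and the monoid of equivalence classes is $(\N,+)$ with generator $\K$. Linearity enters through two computations at the generator: $\Pic(\K)\cong 0$, since every invertible $\K$-$\K$-bimodule is isomorphic to $\K$; and $\End(\id_\K)\cong Z(\K)=\K$ as a $\K$-algebra, whose invertible elements form $\K^*$. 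I emphasise that $\Aut_\ot(\cC)$ is taken in the $\K$-linear sense, so that the functors involved are $\K$-linear on hom-categories; this is essential, as otherwise automorphisms of the field $\K$ would contribute extra components.

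First I would prove that $\Aut_\ot(\cC)$ is connected, that is, that every invertible monoidal $2$-functor $F\colon\cC\to\cC$ is monoidally pseudo-naturally isomorphic to $\Id_\cC$. Being monoidal, $F$ induces a monoid automorphism of $(\N,+)$, necessarily the identity, so $F(\K)$ is Morita-equivalent to $\K$ and $F(\K^{\oplus n})\simeq\K^{\oplus n}$. The real work is to promote this levelwise equivalence to a \emph{monoidal} pseudo-natural isomorphism: one fixes an invertible bimodule $F(\K)\simeq\K$, transports it along direct sums, and checks that the resulting components cohere with the monoidal structure $2$-cells of $F$; the potential obstructions vanish because $\Pic(\K)\cong 0$ and $\End(\id_\K)\cong\K$. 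I expect this coherence check --- verifying that the chosen isomorphisms actually assemble into the data of a monoidal pseudo-natural transformation --- to be the main obstacle and the step demanding the most bookkeeping.

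Next I would compute the endomorphism category $\Hom_{\Aut_\ot(\cC)}(\Id,\Id)$ and show it is equivalent to $B\K^*$. An invertible monoidal pseudo-natural transformation $\theta\colon\Id\To\Id$ is determined, via monoidality and generation by $\K$, by its component $\theta_\K\in\Pic(\K)$; since $\Pic(\K)\cong 0$ we get $\theta_\K\cong\id_\K$ and hence $\theta\cong\id_\Id$, so the hom-category is connected. For its automorphisms, an invertible monoidal modification $m\colon\id_\Id\Rrightarrow\id_\Id$ assigns to each object a central unit, and monoidality forces $m_{\K^{\oplus n}}$ to be the $n$-fold diagonal of the single scalar $m_\K\in Z(\K)^*=\K^*$; vertical composition corresponds to multiplication in $\K^*$. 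This gives a group isomorphism $\End_{\Aut_\ot(\cC)}(\id_\Id)\cong\K^*$, hence $\Hom_{\Aut_\ot(\cC)}(\Id,\Id)\simeq B\K^*$, and together with connectedness an equivalence of bicategories $\Aut_\ot(\cC)\simeq B^2\K^*$.

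It remains to upgrade this to an equivalence of \emph{symmetric monoidal} bicategories. The monoidal product on $\Aut_\ot(\cC)$ is composition of $2$-functors; it is trivial on the contractible object- and $1$-morphism layers, while on $\K^*$ it coincides, by an Eckmann--Hilton argument relating it to vertical composition, with multiplication in $\K^*$, matching the monoidal structure of $B^2\K^*$. The braiding, syllepsis and symmetry on both sides are then induced by the commutativity of $\K^*$, so they correspond under the equivalence. This final part is formal once the underlying bicategorical equivalence and the identification of the $2$-morphisms with $\K^*$ are in place; the mathematical content sits entirely in the connectedness step and the computation $\End(\id_\K)\cong\K$.
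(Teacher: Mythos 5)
Your proposal is correct and follows essentially the same route as the paper's proof: both reduce everything to the single generator $\K$ via Artin--Wedderburn and Morita theory, show every monoidal autoequivalence is pseudo-naturally isomorphic to the identity, trivialize the monoidal pseudo-natural transformations of the identity (your appeal to $\Pic(\K)\cong 0$ is the paper's multiplicity-counting argument via \cite[Lemma 2.6]{hsv16}), and identify the monoidal modifications of $\id_{\id}$ with $\K^*$. If anything, you are more explicit than the paper on two points it leaves implicit, namely promoting the levelwise isomorphisms $F(A)\cong A$ to a \emph{monoidal} pseudo-natural isomorphism, and upgrading the resulting bicategorical equivalence to a \emph{symmetric monoidal} one via an Eckmann--Hilton argument.
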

\begin{proof}
  Let $F: \Alg_2^\fd \to \Alg_2^\fd$ be a weak monoidal 2-equivalence,
  and let $A$ be a finite-dimensional, semi-simple algebra. Then $A$
  is isomorphic to a direct sum of matrix algebras. Calculating up to
  Morita equivalence and using that $F$ has to preserve the single
  simple object $\K$ of $\Alg_2$, we have
  \begin{equation}
    F(A)  \cong F \left(\bigoplus_{i} M_{n_i}(\K) \right) \cong
    \bigoplus_{i} F \left( M_{n_i} (\K) \right)  \cong \bigoplus_i F
    (\K)  \cong \bigoplus_i \K  \cong \bigoplus_{i} M_{n_i} (\K) \cong A.
  \end{equation}
  A straightforward calculation using basic linear algebra
    confirms that these isomorphisms are even pseudo-natural.
  Thus, the functor $F$ is pseudo-naturally isomorphic to the identity
  functor on $\Alg_2^\fd$.

  Now, let $\eta: F \to G$ be a monoidal pseudo-natural isomorphism between two
  endofunctors of $\Alg_2$. Since both $F$ and $G$ are
  pseudo-naturally isomorphic to the identity, we may consider instead
  a pseudo-natural isomorphism $\eta: \id_{\Alg_2^\fd} \to
  \id_{\Alg_2^\fd}$. We claim that up to an invertible modification,
  the 1-equivalence $\eta_A : A \to A$ must be given by the bimodule
  $_AA_A$, which is the identity 1-morphism on $A$ in
  $\Alg_2$. Indeed, since $\eta_A$ is assumed to be linear, it
  suffices to consider the case of $A=M_n(\K)$ and to take direct
  sums. It is well-known that the only simple modules of $A$ are given
  by $\K^n$. Thus,
  \begin{equation}
    \eta_A = (\K^n)^\alpha \ot_\K (\K^n)^\beta,
  \end{equation}
  where $\alpha$ and $\beta$ are multiplicities. Now, \cite[Lemma
  2.6]{hsv16} ensures that these multiplicities are trivial, and thus
  we have $\eta_A = {_AA_A}$ up to an invertible intertwiner. This
  shows that up to invertible modifications, all 1-morphisms in
  $\Aut_\ot(\Alg_2^\fd)$ must be identities.

  Now, let $m$ be a monoidal invertible endo-modification of the
  pseudo-natural transformation $\id_{\id_{\Alg_2^\fd}}$. Then, the
  component $m_A : {_AA_A} \to {_AA_A} $ is an element of
  $\End_{(A,A)}(A) \cong \K$. As the modification square commutes
  automatically, this show that the 2-morphisms of
  $\Aut_\ot(\Alg_2^\fd)$ stand in bijection to $\K^*$.
\end{proof}

\begin{remark}
  Notice that the symmetric monoidal structure on $\Alg_2^\fd$
  considered above is \emph{not} the standard one, which is instead
  the one induced by the tensor product of algebras, and which is the
  monoidal structure relevant for the remainder of the paper.
\end{remark}
The last lemmas imply the following
\begin{lemma}
  \label{cor:action-alg2-trivializable}
  Any monoidal $SO(2)$-action on $\Alg_2^\fd$ equipped with the
  monoidal structure given by the direct sum is trivial.
  \end{lemma}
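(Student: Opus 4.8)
The plan is to apply Proposition \ref{prop:action-trivializable} directly, with $G = SO(2)$ and $\cC = \Alg_2^\fd$ equipped with the direct-sum monoidal structure. Since that proposition already packages the abstract argument, the work here consists entirely of checking that its four hypotheses are satisfied in this concrete situation.

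First I would record the two topological conditions on $G$. The group $SO(2)$ is path connected, being homeomorphic to the circle $S^1$. The same homeomorphism shows that $SO(2)$ is 2-truncated: the universal cover $\R \to S^1$ is contractible, so $\pi_2(SO(2), x) \cong \pi_2(S^1, x) = 0$ for every base point $x$. Next, the hypothesis that $\Aut_\ot(\cC)$ be algebraically 1-connected is precisely the content of Lemma \ref{lem:algdsum}, which identifies $\Aut_\ot(\Alg_2^\fd)$ with $B^2\K^*$ as symmetric monoidal bicategories; in the notation of Proposition \ref{prop:action-trivializable} this makes the relevant abelian group $H = \K^*$.

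It remains to verify the cohomological hypothesis $H^2_{grp}(\pi_1(SO(2), e), \K^*) \simeq 0$. Since $\pi_1(SO(2), e) \cong \Z$, this reduces to $H^2_{grp}(\Z, \K^*) = 0$. This vanishing holds because $\Z$ has cohomological dimension one: the trivial module admits the length-one free resolution $0 \to \Z[\Z] \xrightarrow{t-1} \Z[\Z] \to \Z \to 0$, concentrated in degrees $0$ and $1$, whence $H^n_{grp}(\Z, M) = 0$ for every $\Z$-module $M$ and every $n \geq 2$, independently of the module structure on $M$. With all hypotheses in place, Proposition \ref{prop:action-trivializable} yields that any monoidal $SO(2)$-action on $\Alg_2^\fd$ is pseudo-naturally isomorphic to the trivial action, which is the assertion of the lemma.

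I do not expect a genuine obstacle here: the only step requiring mathematical input beyond bookkeeping is the computation $H^2_{grp}(\Z, \K^*) = 0$, and that is standard. The substantive content of the statement has in fact already been discharged in Lemma \ref{lem:algdsum}, and the present lemma is essentially a corollary assembling that identification with the triviality criterion of Proposition \ref{prop:action-trivializable}.
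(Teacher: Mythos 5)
Your proposal is correct and follows essentially the same route as the paper: verify the hypotheses of Proposition \ref{prop:action-trivializable} using Lemma \ref{lem:algdsum}, reduce to $H^2_{grp}(\Z,\K^*)=0$, and conclude. The only cosmetic difference is that the paper justifies this vanishing via the identification $H^2_{grp}(\Z,\K^*)\simeq H^2(S^1,\K^*)$ (i.e.\ $B\Z\simeq S^1$ is a $1$-dimensional CW complex), whereas you use the length-one free resolution of $\Z$ over $\Z[\Z]$; these are interchangeable standard arguments, and your explicit checks of path-connectedness and $2$-truncatedness are points the paper leaves implicit.
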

  \begin{proof}
    Since $\pi_{1}(SO(2), e)\simeq\mathbb{Z}$, and
    $H^{2}_{grp}(\mathbb{Z}, \mathbb{K}^{*})\simeq H^{2}(S^{1},
    \mathbb{K}^{*})\simeq 0$, Proposition
    \ref{prop:action-trivializable} and Lemma \ref{lem:algdsum} ensure
    that any monoidal $SO(2)$-action on $\Alg_{2}^{\fd}$ is
    trivializable.
  \end{proof}
  Recall that we regarded $\cC=\Alg_2^{\fd}$ as a monoidal
  bicategory with the monoidal structure given by direct sums.
\begin{cor}
  Since $\Alg_2^\fd$ and $\Vect_2^\fd$ are equivalent as additive
  categories, any $SO(2)$-action on $\Vect_2^\fd$ via linear morphisms
  is trivializable.
\end{cor}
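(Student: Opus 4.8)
The plan is to transport the problem across the stated additive equivalence and then invoke the result already obtained for $\Alg_2^\fd$. Write $\Phi \colon \Alg_2^\fd \to \Vect_2^\fd$ for the equivalence of additive (linear) bicategories underlying the phrase \enquote{equivalent as additive categories}; concretely $\Phi$ sends a finite-dimensional semisimple algebra $A$ to its category of modules, a bimodule to the induced linear functor, and an intertwiner to the corresponding natural transformation. First I would check that $\Phi$ is compatible with the direct-sum monoidal structures on both sides, so that it upgrades to an equivalence of symmetric monoidal bicategories for the direct-sum structure. This should be essentially formal: the direct sum of algebras corresponds to the direct sum of their module categories, and $\Phi$ preserves the linear structure by construction.

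Given such a $\Phi$, conjugation $F \mapsto \Phi \circ F \circ \Phi^{-1}$ would induce an equivalence of monoidal bicategories $\Aut_\ot(\Vect_2^\fd) \simeq \Aut_\ot(\Alg_2^\fd)$ between the respective bicategories of linear monoidal autoequivalences, pseudo-natural transformations, and modifications. Combined with Lemma \ref{lem:algdsum}, this yields $\Aut_\ot(\Vect_2^\fd) \simeq B^2\K^*$, so that $\Vect_2^\fd$ with the direct-sum structure is algebraically 1-connected with abelian group $\K^*$. At this point one could either apply Proposition \ref{prop:action-trivializable} directly to $\Vect_2^\fd$, using $\pi_1(SO(2), e) \simeq \Z$ and $H^2_{grp}(\Z, \K^*) \simeq 0$ exactly as in Lemma \ref{cor:action-alg2-trivializable}, or equivalently pull an arbitrary linear $SO(2)$-action $\rho$ on $\Vect_2^\fd$ back along $\Phi$ to a monoidal $SO(2)$-action $\Phi^{-1} \circ \rho \circ \Phi$ on $\Alg_2^\fd$.

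Taking the latter route, Lemma \ref{cor:action-alg2-trivializable} shows the pulled-back action is trivializable, and pushing the trivializing monoidal pseudo-natural isomorphism forward along $\Phi$ would show that $\rho$ itself is pseudo-naturally isomorphic to the trivial action. Either formulation reduces the corollary to the already-established case.

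The main obstacle I anticipate is bookkeeping rather than conceptual: one must verify that $\Phi$ respects the monoidal and linear data strongly enough that \enquote{linear $SO(2)$-action} transports to \enquote{monoidal $SO(2)$-action} and back, and that trivializability — being an equivalence-invariant property of the action — is genuinely preserved by conjugation with $\Phi$. In particular, I would confirm that $\Phi$ sends linear autoequivalences to linear autoequivalences and matches up the $\Aut_\ot$ bicategories coherently, rather than merely inducing a bijection on equivalence classes of objects.
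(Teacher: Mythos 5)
Your proposal is correct and matches the paper's intended argument: the paper leaves this corollary's proof implicit, but its accompanying remark makes clear that the point is exactly that the additive equivalence transports the computation $\Aut_\ot(\Alg_2^\fd)\simeq B^2\K^*$ of Lemma \ref{lem:algdsum} to $\Aut_\ot(\Vect_2^\fd)$, after which Proposition \ref{prop:action-trivializable} applies with $\pi_1(SO(2),e)\simeq\Z$ and $H^2_{grp}(\Z,\K^*)\simeq 0$, just as in Lemma \ref{cor:action-alg2-trivializable}. Your alternative route of pulling the action back along $\Phi$ is an equivalent repackaging of the same transport argument, so there is nothing genuinely different here.
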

\begin{remark}
  The last two results rely on the fact that
  $\Aut_\ot(\Alg_2^\fd)$ and $\Aut_\ot(\Vect^\fd_2)$ are 1-connected
  as additive categories. This is due to the fact that
  fully-dualizable part of either $\Alg_2$ or $\Vect_2$ is
  semi-simple. An example in which the conditions in Proposition
  \ref{prop:action-trivializable} do \emph{not} hold is provided by
  the bicategory of Landau-Ginzburg models.
\end{remark}

\section{Computing homotopy fixed points}
\label{sec:comp-homot-fixed}
In this Section, we explicitly compute the bicategory of homotopy
fixed points of an $SO(2)$-action which is induced by an arbitrary
pseudo-natural equivalence of the identity functor of an arbitrary
bicategory $\cC$. Recall that a $G$-action on a bicategory $\cC$ is a
monoidal 2-functor $\rho: \Pi_2(G) \to \Aut(\cC)$, or equivalently a
trifunctor $\rho: B\Pi_2(G) \to \Bicat$ with $\rho(*)=\cC$. The
bicategory of homotopy fixed points $\cC^G$ is then given by the
tri-limit of this trifunctor.

In $\Bicat$, the tricategory of bicategory, this trilimit can be
computed as follows: if $\Delta : B\Pi_2(G) \to \Bicat$ is the
constant functor assigning to the one object $*$ the terminal
bicategory with one object, the trilimit of the action functor $\rho$
is given by
\begin{equation}
  \cC^G:=\lim \rho = \Nat(\Delta, \rho),  
\end{equation}
the bicategory of tri-transformations between $\rho$ and
$\Delta$. This definition is explicitly spelled out in \cite[Remark
3.11]{hsv16}. We begin by defining an $SO(2)$-action on an arbitrary
symmetric monoidal bicategory, starting from a pseudo-natural
transformation of the identity functor on $\cC$.

\begin{newdef}
  \label{def:non-triv-so2-action}
  Since $\Pi_2(SO(2))$ is equivalent to the bicategory with one
  object, $\Z$ worth of morphisms, and only identity 2-morphisms, we
  may define an $SO(2)$-action $\rho: \Pi_2(SO(2)) \to \Aut_\ot(\cC)$
  by the following data:
  \begin{itemize}
  \item For every group element $g \in SO(2)$, we assign the identity
    functor of $\cC$.
  \item For the generator $1 \in \Z$, we assign the pseudo-natural
    transformation of the identity functor given by $\alpha$. Due to
    the monoidality, this determines the value of $\rho$ on an
    arbitrary integer.
  \item Since there are only identity 2-morphisms in $\Z$, we have to
    assign these to identity 2-morphisms in $\cC$.
  \item For composition of 1-morphisms, we assign the invertible
    modification $\rho(a+b) \cong \rho(a)\circ \rho(b)$ coming from
    the fact that $\alpha$ is a \emph{monoidal} pseudo-natural
    transformation with respect to composition, which is the monoidal
    product in $\Aut_\ot(\cC)$.
  \item In order to make $\rho$ into a monoidal 2-functor, we have to
    assign additional data which we can choose to be trivial. In
    detail, we set $\rho(g \ot h):= \rho(g) \ot \rho(h)$, and
    $\rho(e):= \id_{\cC}$. Finally, we choose $\omega$, $\gamma$ and
    $\delta$ as in \cite[Remark 3.8]{hsv16} to be identities.
  \end{itemize}
\end{newdef}
For a proof that this defines indeed a weak 2-functor, we refer to
\cite[Lemma 3.2.3]{davi11}.

Our main example is the action of the Serre automorphism on the core
of fully-dualizable objects:
\begin{ex}
  \label{ex:serre-action-core-c-fd}
  If $\cC$ is a symmetric monoidal bicategory, consider
  $\core{\cC^\fd}$, the core of the fully-dualizable objects of
  $\cC$. By Proposition \ref{prop:serre-pseudo-natural-core-fd}, the
  Serre automorphism defines a pseudo-natural equivalence of the
  identity functor on $\core{\cC^\fd}$. By Definition
  \ref{def:non-triv-so2-action}, we obtain an $SO(2)$-action on
  $\core{\cC^\fd}$, which we denote by $\rho^{S}$.
\end{ex}
The next theorem computes the bicategory of homotopy fixed points
$\cC^{SO(2)}$ of the action in Definition
\ref{def:non-triv-so2-action}. This theorem generalizes \cite[Theorem
4.1]{hsv16}, which only computes the bicategory of homotopy fixed
points of the \emph{trivial} $SO(2)$-action.
\begin{theorem}
  \label{thm:fixed-points-non-triv-so2-action}
  Let $\cC$ be a symmetric monoidal bicategory, and let $\alpha:
  \id_\cC \to \id_\cC$ be a monoidal pseudo-natural equivalence of the identity
  functor on $\cC$. Let $\rho$ be the $SO(2)$-action on $\cC$ as in
  Definition \ref{def:non-triv-so2-action}.  Then, the bicategory of
  homotopy fixed points $\cC^G$ is equivalent to the bicategory with
  \begin{itemize}
  \item objects: $(c,\lambda)$ where $c$ is an object of $\cC$ and
    $\lambda : \alpha_c \to \id_c$ is a 2-isomorphism,
  \item 1-morphisms $(c, \lambda) \to (c',\lambda')$ in $\cC^G$ are
    given by 1-morphisms $f: c \to c'$ in $\cC$, so that the diagram
    \begin{equation}
      \label{eq:better-thm-condition-1mor}
      \vcenter{\hbox{\includegraphics{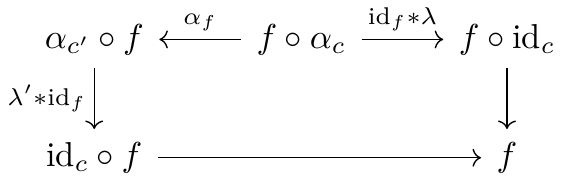}}}
    \end{equation}
    commutes,
  \item 2-morphisms of $\cC^G$ are given by 2-morphisms in $\cC$.
  \end{itemize}
\end{theorem}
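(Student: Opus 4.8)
The plan is to unravel the definition $\cC^G = \Nat(\Delta,\rho)$ of the homotopy fixed point bicategory, following the explicit description of tri-transformations, trimodifications and perturbations given in \cite[Remark 3.11]{hsv16}, and to show that after inserting the specific form of the action $\rho$ from Definition \ref{def:non-triv-so2-action} the data collapses to the bicategory described in the statement. This is the same strategy as in \cite[Theorem 4.1]{hsv16}; the only difference is that the pseudo-natural transformation attached to the generator of $\pi_1(SO(2))$ is now $\alpha$ rather than the identity. Accordingly I would carry $\alpha$ through the whole computation and check that the trivial-action result of \cite{hsv16} is recovered in the special case $\alpha = \id_\cC$.

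At the level of objects, a homotopy fixed point is a tri-transformation $\Delta \To \rho$. Since the source bicategory $B\Pi_2(SO(2))$ has a single object, the 1-cell component of such a transformation is simply a choice of object $c \in \cC$. The remaining data is indexed by the 1- and 2-morphisms of $\Pi_2(SO(2)) \simeq B\Z$: for the generator $1 \in \Z$, to which $\rho$ assigns the transformation $\alpha$, the naturality datum of the tri-transformation is an invertible 2-cell which, after identifying $\rho(1)_c = \alpha_c$, becomes precisely a 2-isomorphism $\lambda : \alpha_c \to \id_c$. I would then argue that the value of the tri-transformation on an arbitrary integer $n$ is forced by $\lambda$ together with the invertible modifications $\rho(a+b) \cong \rho(a)\circ\rho(b)$ supplied by the monoidality of $\alpha$, and that the coherence axioms of a tri-transformation impose no further condition. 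This produces exactly the objects $(c,\lambda)$.

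For 1-morphisms I would spell out a trimodification between two such homotopy fixed points $(c,\lambda)$ and $(c',\lambda')$. Its underlying datum is a 1-morphism $f : c \to c'$ of $\cC$, and its single nontrivial coherence cell, attached to the generator of $\Z$, involves the pseudo-naturality 2-cell $\alpha_f$ of $\alpha$ at $f$ together with $\lambda$ and $\lambda'$. Writing out the trimodification axiom for the generator yields exactly the commuting diagram \eqref{eq:better-thm-condition-1mor}. Likewise, a perturbation between two such 1-morphisms reduces to a 2-morphism of $\cC$, its defining equation being automatically satisfied because the only 2-cells in $B\Z$ are identities; this gives the claimed description of 2-morphisms. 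Assembling these three levels and comparing with $\Nat(\Delta,\rho)$ then produces the asserted equivalence of bicategories.

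The main obstacle is the coherence bookkeeping in the object step: one must verify that the higher data of the tri-transformation (its values on all $n \in \Z$ and the associated invertible modifications) is uniquely and consistently determined by the single 2-isomorphism $\lambda$, and that the tri-transformation coherence equations reduce to identities rather than to additional constraints. Here the hypothesis that $\alpha$ is a \emph{monoidal} pseudo-natural transformation is essential: it is precisely the monoidal structure that furnishes the comparison modifications $\rho(a+b)\cong\rho(a)\circ\rho(b)$ making the assignment $n \mapsto \rho(n)$ coherent, so that nothing beyond $\lambda$ survives. Once this collapse is under control, the 1- and 2-morphism levels are routine unwindings of the same kind.
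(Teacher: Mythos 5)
Your overall strategy---unpacking $\cC^G = \Nat(\Delta,\rho)$ via the explicit description in \cite[Remark 3.11]{hsv16} and showing that the data collapses---is the same as the paper's, but there is a genuine gap in how you identify the data. A tri-transformation $\Delta \To \rho$ is \emph{not} just an object $c$ together with a 2-cell attached to the generator of $\Z$. Its data comprises: the object $c$; a 1-equivalence $\Theta : c \to c$ in $\cC$ (the component of the adjoint equivalence that a tri-transformation carries at the hom-bicategory level); for every $n \in \Z$ an invertible 2-cell $\Theta_n : \alpha_c^n \circ \Theta \to \Theta \circ \id_c$ making $(\Theta,\Theta_n)$ a pseudo-natural transformation; and two invertible modifications $\Pi : \Theta \circ \Theta \to \Theta$ and $M : \Theta \to \id_c$, subject to three coherence equations. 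In particular, the naturality datum at the generator is $\Theta_1 : \alpha_c \circ \Theta \to \Theta \circ \id_c$, not a 2-isomorphism $\alpha_c \to \id_c$; the latter, $\lambda$, only arises after conjugating $\Theta_1$ with $M$. The entire substance of the paper's proof lies in showing that this extra data is redundant: the coherence equations force $\Pi$ to be determined by $M$ and render the associativity constraint automatic; pseudo-naturality forces $\lambda_0 = \id_c$ and $\lambda_n = \lambda_1 \ast \cdots \ast \lambda_1$; at the 1-morphism level the 2-cell $m : f \circ \Theta \to \Theta' \circ f$ of a trimodification is determined by $M$ and $M'$, and its modification square is then equivalent to the diagram in Equation \eqref{eq:better-thm-condition-1mor}; and the 2-morphism condition holds automatically. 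This is precisely what establishes that the \emph{forgetful} functor (forgetting $\Theta$, $\Pi$, $M$, $m$) is an equivalence of bicategories---note it is an equivalence, not an isomorphism, because homotopy fixed points genuinely carry more data than $(c,\lambda)$. By asserting at the outset that the tri-transformation data ``is simply'' a choice of $c$ and $\lambda$, you assume the conclusion rather than prove it.

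A secondary, related misdirection: you place the burden of the collapse on the monoidality of $\alpha$. Monoidality of $\alpha$ is what is needed to \emph{define} the action $\rho$ in Definition \ref{def:non-triv-so2-action} in the first place (the values $\rho(n)=\alpha^n$ and the compositor modifications $\rho(a+b) \cong \rho(a)\circ\rho(b)$); these modifications are data of $\rho$, not of the fixed point, and must not be conflated with the modifications $\Pi$ and $M$ belonging to the tri-transformation. Inside the fixed-point computation, what pins down $\lambda_n$ in terms of $\lambda_1$ is the pseudo-naturality axiom for $(\Theta,\Theta_n)$ with respect to composition of 1-morphisms in $B\Z$, used together with $M$; no further appeal to the monoidality of $\alpha$ occurs there.
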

\begin{proof}
  In order to prove the theorem, we need to explicitly unpack the
  definition of the bicategory of homotopy fixed points $\cC^G$. This
  is done in \cite[Remark 3.11 - 3.14]{hsv16}. In the following, we
  will use the notation introduced in \cite{hsv16}.

  The idea of the proof is to show that the forgetful functor which on
  objects of $\cC^G$ forgets the data $\Theta$, $\Pi$ and $M$ is an
  equivalence of bicategories. In order to show this, we need to
  analyze the bicategory of homotopy fixed points. We start with the
  objects of $\cC^G$.

  By definition, a homotopy fixed point of this action consists of
  \begin{itemize}
  \item An object $c \in \cC$,
  \item A 1-equivalence $\Theta: c \to c$,
  \item For every $n \in \Z$, an invertible 2-morphism $\Theta_n :
    \alpha_c^n \circ \Theta \to \Theta \circ \id_c$ so that $(\Theta,
    \Theta_n)$ fulfill the axioms of a pseudo-natural transformation,
  \item A 2-isomorphism $\Pi: \Theta \circ \Theta \to \Theta$ which
    obeys the modification square,
  \item Another 2-isomorphism $M: \Theta \to \id_c$
  \end{itemize}
  so that the following equations hold: Equation 3.18 of \cite{hsv16}
  demands that
  \begin{equation}
    \label{eq:fixed-point-condition1-1}
    \Pi \circ (\id_\Theta * \Pi)  = \Pi \circ (\Pi * \id_\Theta)
  \end{equation}
  whereas Equation 3.19 of \cite{hsv16}
  demands that $\Pi$ equals the composition
  \begin{equation}
    \label{eq:fixed-point-condition2-1}
    \Theta \circ \Theta \xrightarrow{ \id_\Theta * M} \Theta \circ \id_c \cong \Theta
  \end{equation} 
  and finally Equation 3.20 of \cite{hsv16}
  tells us that $\Pi$ must also be equal to the composition
  \begin{equation}
    \label{eq:fixed-point-condition3-1}
    \Theta \circ \Theta \xrightarrow{ M *  \id_\Theta }\id_c \circ \Theta \cong \Theta.
  \end{equation} 
  Hence $\Pi$ is fully specified by $M$. An explicit calculation using
  the two equations above then confirms that Equation
  \eqref{eq:fixed-point-condition1-1} is automatically
  fulfilled. Indeed, by composing with $\Pi^{-1}$ from the right, it
  suffices to show that $\id_\Theta * \Pi=\Pi* \id_\Theta$. Suppose
  for simplicity that $\cC$ is a strict 2-category. Then,
  \begin{equation}
    \begin{aligned}
      \id_\Theta * \Pi & = \id_\Theta *(M* \id_\Theta) && \qquad \text{by equation \eqref{eq:fixed-point-condition3-1}} \\
      &= (\id_\Theta*M)*\id_{\Theta} && \\
      &=\Pi* \id_\Theta && \qquad \text{by equation
        \eqref{eq:fixed-point-condition2-1}. }
    \end{aligned}
  \end{equation}
  Adding appropriate associators shows that this is true in a general
  bicategory.

  Note that by using the modification $M$, the 2-morphism
  $\Theta_n: \alpha_n^c \to \Theta \circ \id_c$ can be regarded as a
  2-morphism $\lambda_n : \alpha_c \to \id_c$. Here, $\alpha^n_c$ is
  the $n$-times composition of 1-morphism $\alpha_c$. Indeed, define
  $\lambda_n$ by setting
  \begin{equation}
    \label{eq:lambda}
    \lambda_n:=\left( \alpha_c \cong \alpha_c \circ \id_c \xrightarrow{\id_{\alpha_c} * M^{-1}} \alpha_c \circ \Theta
      \xrightarrow{\Theta_n} \Theta \circ \id_c \cong \Theta
      \xrightarrow{M} \id_c \right).
  \end{equation} 
  In a strict 2-category, the fact that $\Theta$ is a pseudo-natural
  transformation requires that $\lambda_0=\id_c$ and that
  $\lambda_n=\lambda_1 * \cdots * \lambda_1$. In a bicategory, similar
  equations hold by adding coherence morphisms. Thus,
  $\lambda_n$ is fully determined by $\lambda_1$. In order to simplify
  notation, we set $\lambda:=\lambda_1: \alpha_c \to \id_c$. 

  A 1-morphism of homotopy fixed points $(c, \Theta, \Theta_n, \Pi, M)
  \to (c', \Theta', \Theta_n', \Pi', M')$ consists of:
  \begin{itemize}
  \item a 1-morphism $f:c \to c'$,
  \item an invertible 2-morphism $m:f \circ \Theta \to \Theta' \circ
    f$ which fulfills the modification square. Note that $m$ is
    equivalent to a 2-isomorphism $m: f \to f'$ which can be seen by using the
    2-morphism $M$.
  \end{itemize}
  The condition due to Equation 3.24 of \cite{hsv16}
  demands that the following 2-isomorphism
  \begin{align}
    f \circ \Theta \xrightarrow{\id_f * M} f \circ \id_c \cong f
    \intertext{is equal to the 2-isomorphism} f \circ \Theta
    \xrightarrow{m} \Theta' \circ f \xrightarrow{M' * \id_f} \id_{c'}
    \circ f \cong f
  \end{align}
  and thus is equivalent to the equation
  \begin{equation}
    \label{eq:1-morphism-fixed-point-condition2-1}
    m=\left( f \circ \Theta \xrightarrow{\id_f * M} f \circ \id_c
      \cong f 
      \cong \id_{c'} \circ f \xrightarrow{{M'}^{-1}* \id_{f}} \Theta' \circ f \right)
  \end{equation}
  Thus, $m$ is fully determined by $M$ and $M'$.  The condition due to
  Equation 3.23 of \cite{hsv16}
  reads
  \begin{equation}
    \label{eq:1-morphism-fixed-point-condition1-1}
    m \circ (\id_f * \Pi) = (\Pi' * \id_f) \circ (\id_{\Theta'} * m) \circ (m * \id_\Theta)
  \end{equation}
  and is automatically satisfied, as an explicit calculation in
  \cite{hsv16} confirms.  Now, it suffices to look at the modification
  square of $m$, in Equation 3.25 of \cite{hsv16}. This condition is
  equivalent to the commutativity of the diagram
  \begin{equation}
    \vcenter{\hbox{\includegraphics{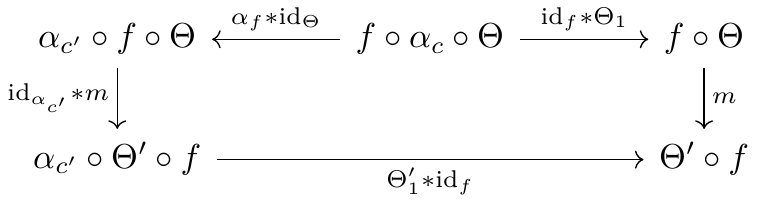}}}
  \end{equation}
  Substituting $m$ as in Equation
  \eqref{eq:1-morphism-fixed-point-condition2-1} and $\Theta_1$ for
  $\lambda:=\lambda_1$ as defined in Equation \eqref{eq:lambda}, one
  confirms that this diagram commutes if and only if the diagram in
  Equation \eqref{eq:better-thm-condition-1mor} commutes.

  If $(f,m)$ and $(g,n)$ are 1-morphisms of homotopy fixed points, a
  2-morphism of homotopy fixed points consists of a 2-isomorphism
  $\beta:f \to g$ in $\cC$. The condition coming from Equation
  3.26 of \cite{hsv16} then demands that the diagram
  \begin{equation}
    \label{eq:2-morphism-fixed-point-condition-1}
    \vcenter{\hbox{\includegraphics{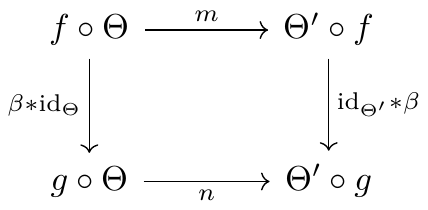}}}
  \end{equation}
  commutes. Using the fact that both $m$ and $n$ are uniquely
  specified by $M$ and $M'$, one quickly confirms that this diagram
  commutes automatically.

  Our detailed analysis of the bicategory $\cC^G$ shows that the
  forgetful functor which forgets the data $\Theta$, $M$, and $\Pi$ on
  objects and assigns $\Theta_1$ to $\lambda$, which forgets the data
  $m$ on 1-morphisms, and which is the identity on 2-morphisms is an
  equivalence of bicategories.
\end{proof}
\begin{cor}
  \label{cor:homotopy-fixed-points-serre}
  Let $\cC$ be a symmetric monoidal bicategory, and consider the
  $SO(2)$-action of the Serre automorphism on $\core{\cC^\fd}$ as in
  Example \ref{ex:serre-action-core-c-fd}. Then, the bicategory of
  homotopy fixed points $\core{\cC^\fd}^{SO(2)}$ is equivalent to a
  bicategory where
  \begin{itemize}
  \item objects are given by pairs $(X,\lambda_X)$ with $X$ a
    fully-dualizable object of $\cC$ and $\lambda_X : S_X \to \id_X$
    is a 2-isomorphism which trivializes the Serre automorphism,
  \item 1-morphisms are given by 1-equivalences $f:X \to Y$ in $\cC$,
    so that the diagram
    \begin{equation}
      \label{eq:serre-auto-fixed-point-condition}
      \vcenter{\hbox{\includegraphics{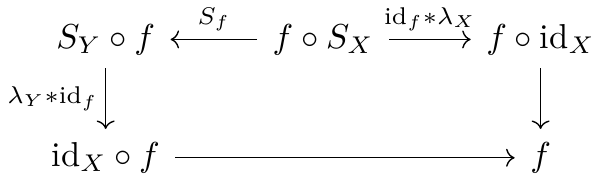}}}
    \end{equation}
    commutes, and
  \item 2-morphisms are given by 2-isomorphisms in $\cC$.
  \end{itemize}
\end{cor}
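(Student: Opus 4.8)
The plan is to obtain this corollary as a direct specialization of Theorem~\ref{thm:fixed-points-non-triv-so2-action}, with the abstract bicategory of that theorem taken to be $\core{\cC^\fd}$ and the abstract transformation $\alpha$ taken to be the Serre automorphism $S$. First I would record that $\core{\cC^\fd}$ satisfies the standing hypotheses of the theorem. It is a symmetric monoidal bicategory: the fully-dualizable objects of $\cC$ are closed under the tensor product, and the maximal sub-bigroupoid of a symmetric monoidal bicategory inherits a symmetric monoidal structure. Moreover, by Proposition~\ref{prop:serre-pseudo-natural-core-fd} the assignment $X \mapsto S_X$ is a pseudo-natural equivalence of $\Id_{\core{\cC^\fd}}$, and by the preceding proposition establishing the monoidality of the Serre automorphism it is in fact a \emph{monoidal} such equivalence. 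Thus $S$ plays exactly the role of $\alpha$ in the hypotheses of Theorem~\ref{thm:fixed-points-non-triv-so2-action}.

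Next I would identify the $SO(2)$-action $\rho^S$ of Example~\ref{ex:serre-action-core-c-fd} with the action of Definition~\ref{def:non-triv-so2-action} for the choice $\alpha = S$ on the ambient bicategory $\core{\cC^\fd}$; this is precisely how $\rho^S$ was constructed. With this identification in place, Theorem~\ref{thm:fixed-points-non-triv-so2-action} applies verbatim and computes the bicategory of homotopy fixed points $\core{\cC^\fd}^{SO(2)}$.

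It then remains only to translate the generic output of the theorem into the language of the Serre automorphism. An object produced by the theorem is a pair $(c,\lambda)$ with $\lambda: \alpha_c \to \id_c$ a 2-isomorphism; substituting $c = X$ and $\alpha_c = S_X$ yields exactly the pairs $(X,\lambda_X)$ with $\lambda_X: S_X \to \id_X$ a trivialization of the Serre automorphism. For the higher morphisms I would use that $\core{\cC^\fd}$ is a bigroupoid, so every 1-morphism is automatically a 1-equivalence and every 2-morphism is a 2-isomorphism. Hence the theorem's 1-morphisms $f: c \to c'$ subject to its commuting diagram become 1-equivalences $f: X \to Y$ subject to the diagram in Equation~\eqref{eq:serre-auto-fixed-point-condition}, and its 2-morphisms in $\cC$ become 2-isomorphisms in $\cC$, which is the asserted description.

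I do not expect a genuine obstacle, since the statement is a specialization of an already-proven theorem. The only point that warrants explicit justification — and which I regard as the main step — is the verification that $\core{\cC^\fd}$ meets the theorem's hypotheses, namely that it is symmetric monoidal and that the Serre automorphism is a monoidal pseudo-natural equivalence of its identity functor; both are supplied by the results of Section~\ref{sec:fully-dual-objects}.
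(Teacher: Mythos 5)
Your proposal is correct and is exactly the route the paper intends: the corollary is stated without a separate proof precisely because it is the specialization of Theorem~\ref{thm:fixed-points-non-triv-so2-action} to the bicategory $\core{\cC^\fd}$ with $\alpha = S$, which is how the action $\rho^S$ of Example~\ref{ex:serre-action-core-c-fd} was defined. Your two supporting observations — that Section~\ref{sec:fully-dual-objects} supplies the hypotheses (symmetric monoidality of $\core{\cC^\fd}$ and the monoidal pseudo-naturality of $S$), and that all 1- and 2-morphisms of the bigroupoid $\core{\cC^\fd}$ are automatically equivalences and isomorphisms in $\cC$ — are precisely the details the paper leaves implicit.
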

\begin{remark}
  Recall that we have defined the bicategory of homotopy fixed points
  $\cC^G$ as the tri-limit of the action considered as a trifunctor
  $\rho:B\Pi_2(G) \to \Bicat$. Since we only consider symmetric
  monoidal bicategories, we actually obtain an action with values in
  $\SymMonBicat$, the tricategory of symmetric monoidal
  bicategories. It would be interesting to compute the limit of the
  action in this tricategory. We expect that this trilimit computed in
  $\SymMonBicat$ is given by $\cC^G$ as a bicategory, with the
  symmetric monoidal structure induced by the symmetric monoidal
  structure of $\cC$.
\end{remark}
\begin{remark}
  By \cite{davi11}, the action via the Serre automorphism on
  $\core{\Alg_2^\fd}$ is trivializable. The category of homotopy fixed
  points $\core{\Alg_2^\fd}^{SO(2)}$ is then equivalent to the
  bigroupoid of symmetric, semi-simple Frobenius algebras.

  Similarly, the action of the Serre automorphism on $\Vect_2$ is
  trivializable. The bicategory of homotopy fixed points of this
  action is equivalent to the bicategory of finite Calabi-Yau
  categories, cf. \cite{hsv16}.
\end{remark}
\section{The 2-dimensional framed bordism bicategory}
\label{sec:fram-bord-bicat}
In this Section, we introduce a stricter version of the framed bordism
bicategory $\Cob_{2,1,0}^\fr$: this symmetric monoidal bicategory
$\F_{cfd}$ is the free bicategory of a coherent fully-dual pair as
introduced in \cite[Definition 3.13]{piotr14}.

In order to efficiently work with this symmetric monoidal bicategory
$\F_{cfd}$, we use a strictification result for symmetric monoidal
bicategories as proven in \cite[Proposition 13]{bart14}: any symmetric monoidal
bicategory is equivalent to a \emph{stringent} symmetric monoidal
  2-category, which can be completely described in terms of a wire diagram
calculus introduced in \cite{bart14}. In the following, we apply this
strictification result to the symmetric monoidal
bicategory $\F_{cfd}$, and provide a description using the wire diagram calculus developed in \cite{bart14}, which we also refer to for the definition of a stringent symmetric monoidal 2-category.

Using this description, we define a non-trivial $SO(2)$-action on
 $\F_{cfd}$. If $\cC$ is an arbitrary symmetric monoidal bicategory,
the action on $\F_{cfd}$ will induce an action on the functor
bicategory $\Fun_\ot(\F_{cfd}, \cC)$ of symmetric monoidal
functors. Using the Cobordism Hypothesis for framed manifolds, which
has been proven in the bicategorical framework in \cite{piotr14}, we
obtain an $SO(2)$-action on $\core{\cC^\fd}$. We show that this
induced action coming from the framed bordism bicategory is exactly
the action given by the Serre automorphism.

We begin by recasting the definition of $\F_{cfd}$ in terms of the wire diagram calculus.
\begin{newdef}
  \label{def:F-cfd}
  The symmetric monoidal bicategory $\F_{cfd}$ consists of
  \begin{itemize}
  \item 2 generating objects $L$ and $R$,
  \item 4 generating 1-morphisms, given by
    \begin{itemize}
    \item a 1-morphism $\coev: 1 \to R \ot L$, which we write as $
      \vcenter{\hbox{\includegraphics[scale=0.8]{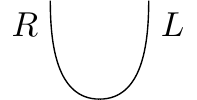}}}$ ,
    \item $\ev:L \ot R \to 1$ which we write as
      $\vcenter{\hbox{\includegraphics[scale=0.8]{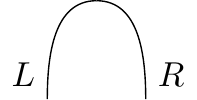}}}$,
    \item a 1-morphism $q: L \to L$,
    \item another 1-morphism $q^{-1}: L \to L$,
    \end{itemize}
  \item 12 generating 2-cells given by
    \begin{itemize}
    \item isomorphisms $\alpha$, $\beta$, $\alpha^{-1}$ and
      $\beta^{-1}$ as in Definition \ref{def:dual-pair}, which in
      pictorial form are given as follows:
      \begin{equation}
        \vcenter{\hbox{\includegraphics{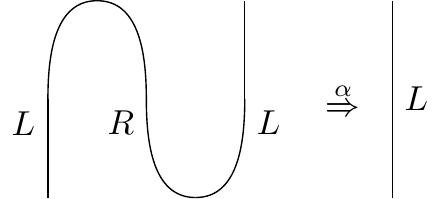}}}
        \hspace{2cm}
        \vcenter{\hbox{\includegraphics{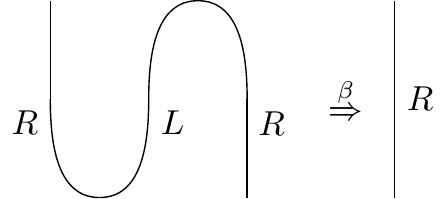}}}
      \end{equation}
    \item isomorphisms $\psi: q q^{-1} \cong \id_L : \psi^{-1}$ and
      $\phi:q^{-1} q \cong \id_L : \phi^{-1}$
    \item 2-cells $\mu_e : \id_1 \to \ev \circ \ev^L $ and $\eps_e:
      \ev^L \circ \ev \to \id_{L \ot R}$, where $\ev^L:= \tau \circ
      (\id_R \ot q^{-1}) \circ \coev$ which in pictorial form are
      given as follows:
      \begin{equation}
        \vcenter{\hbox{\includegraphics{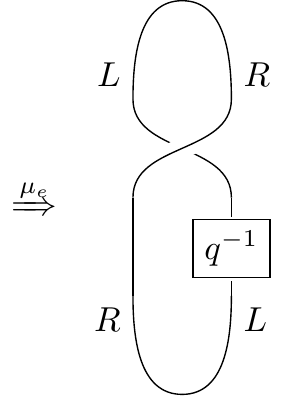}}}
        \hspace{2cm}
        \vcenter{\hbox{\includegraphics{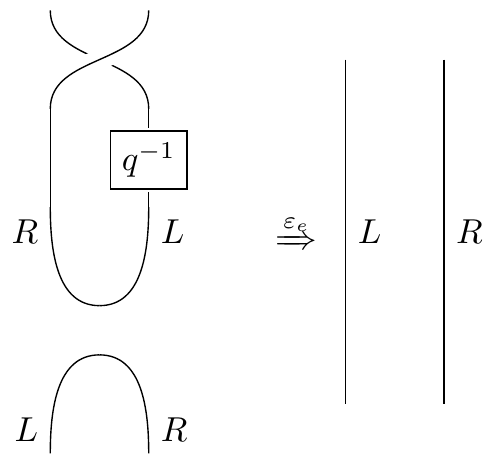}}}
      \end{equation}

    \item 2-cells $\mu_c : \id_{R \ot L} \to \coev \circ \coev^L$ and
      $\eps_c : \coev^L \circ \coev \to \id_1$, where ${\coev}^L:= \ev
      \circ (q \ot \id_R) \circ \tau$
    \end{itemize}
    which in pictorial form are given as follows:
    \begin{equation}
      \vcenter{\hbox{\includegraphics{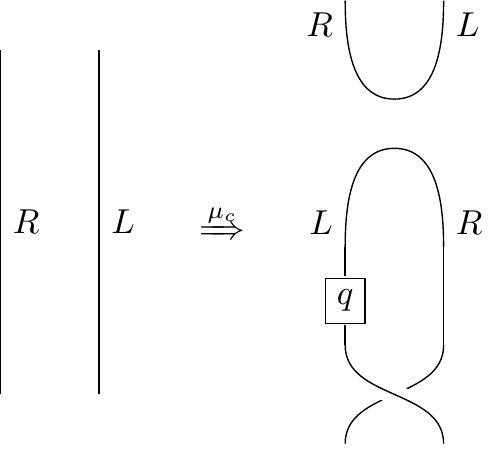}}}
      \hspace{2cm}
      \vcenter{\hbox{\includegraphics{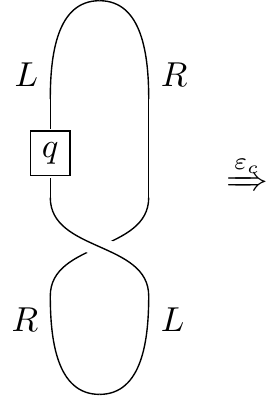}}}
    \end{equation}
  \end{itemize}
  so that the following relations hold:
  \begin{itemize}
  \item $\alpha$ and $\alpha^{-1}$, $\beta$ and $\beta^{-1}$, $\phi$
    and ${\phi}^{-1}$, $\psi$ and $\psi^{-1}$ are inverses to each
    other,
  \item $\mu_e$ and $\eps_e$ satisfy the two Zorro equations, which in
    pictorial form demand that the following composition of
    2-morphisms
    \begin{equation}
      \vcenter{\hbox{\includegraphics[scale=0.8]{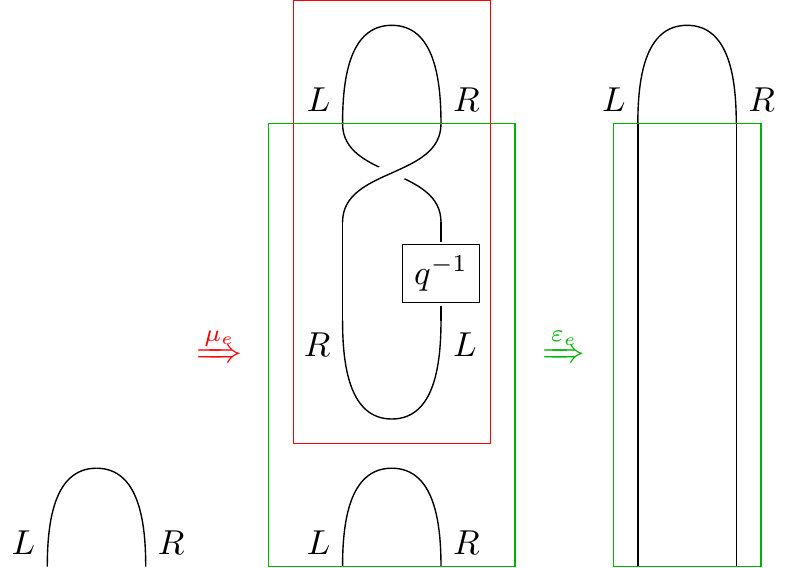}}}
    \end{equation}
    is equal to $\id_{\ev}$, and that the following composition of
    2-morphisms
    \begin{equation}
      \vcenter{\hbox{\includegraphics[scale=0.8]{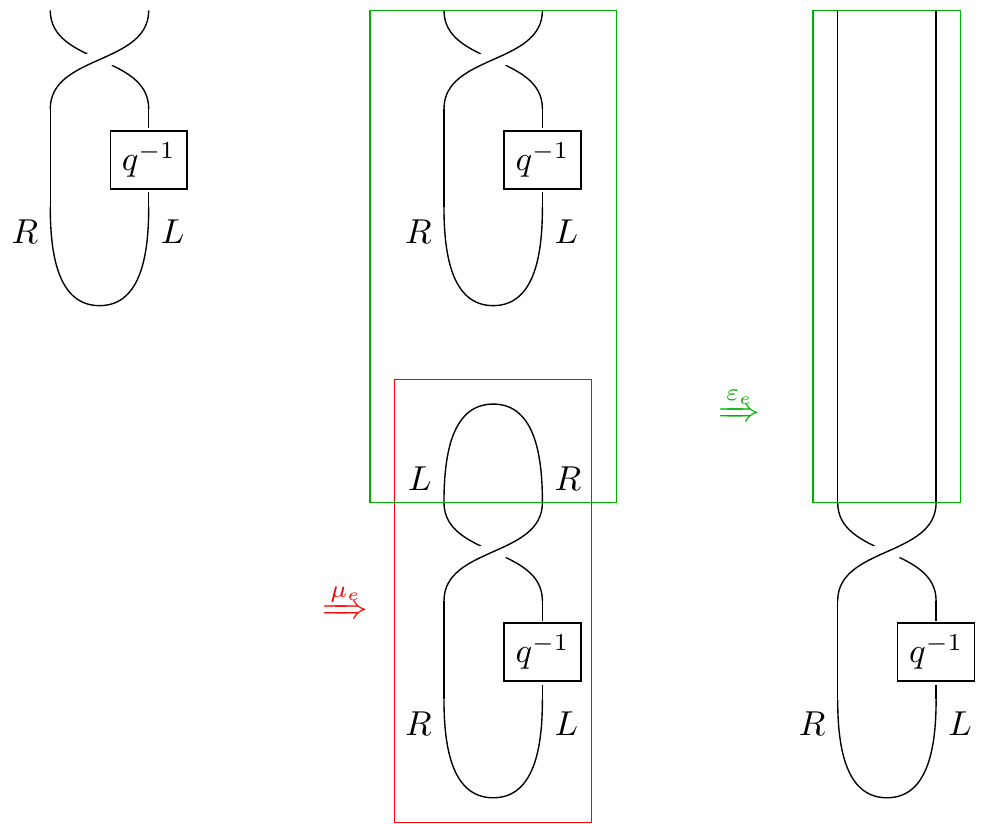}}}
    \end{equation}
    is equal to $\id_{\ev^L}$.
  \item $\mu_c$ and $\eps_c$ satisfy the two Zorro equations, which in
    pictorial form demand that the composition
    \begin{equation}
      \vcenter{\hbox{\includegraphics[scale=0.8]{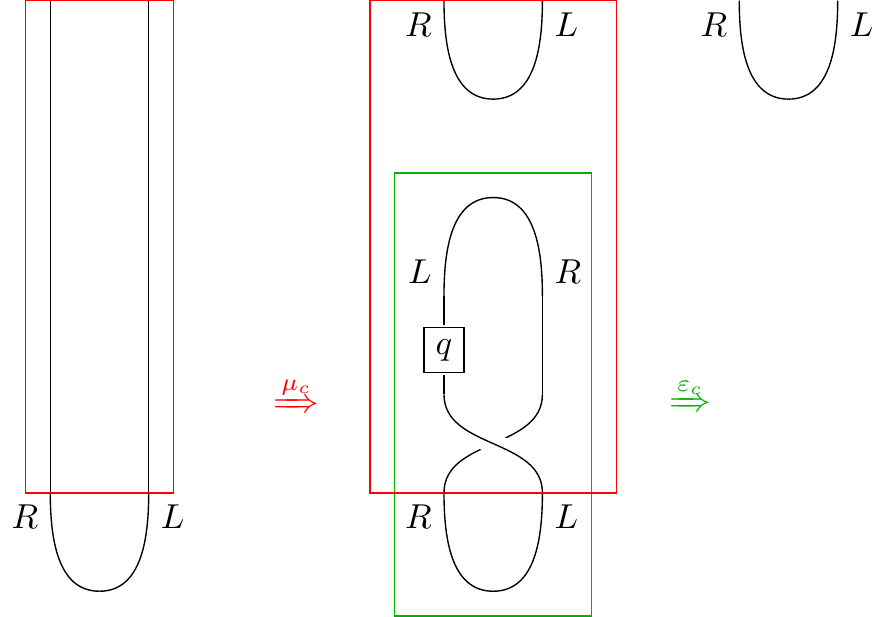}}}
    \end{equation}
    is equal to $\id_{\coev}$, and the composition of the following
    2-morphisms
    \begin{equation}
      \vcenter{\hbox{\includegraphics[scale=0.8]{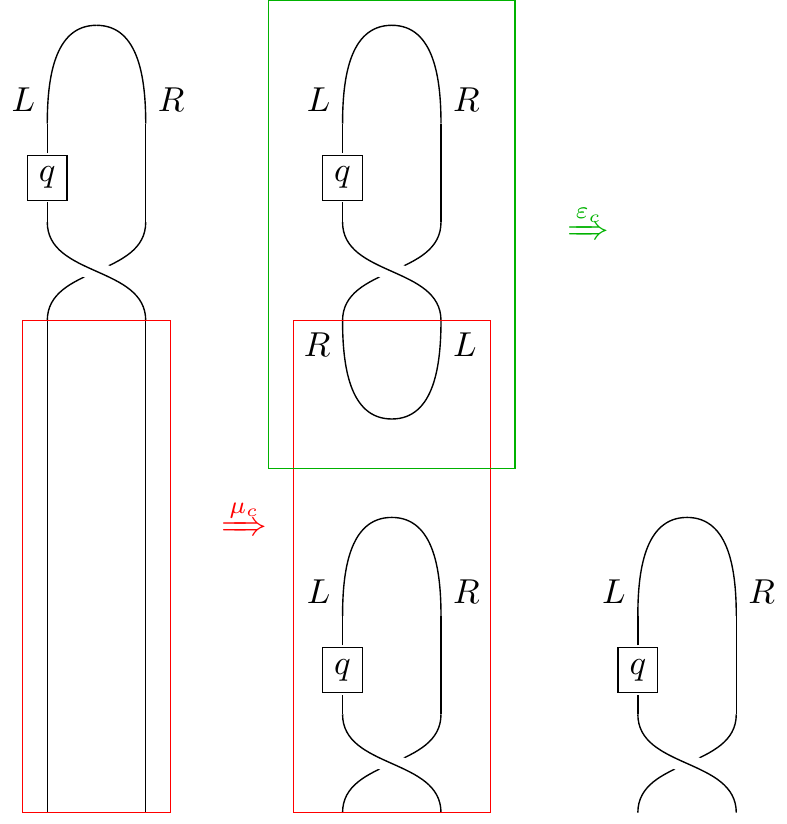}}}
    \end{equation}
    is equal to $\id_{\coev^L}$.
  \item $\phi$ and $\psi$ satisfy triangle identities,
  \item the cusp-counit equations in figure 5 and 6 on p.33 of
    \cite{piotr14} are satisfied,
  \item the swallowtail equations in figure 3 and 4 on p.15 of
    \cite{piotr14} are satisfied.
  \end{itemize}
\end{newdef}

\subsection{Action on the framed bordism bicategory}
We can now proceed to construct an $SO(2)$-action on $\F_{cfd}$. This action will
be vital for the remainder of the paper.

By Definition \ref{def:non-triv-so2-action} it suffices to construct a
pseudo-natural equivalence of the identity functor on $\F_{cfd}$ in
order to construct an $SO(2)$-action. This pseudo-natural
transformation is given as follows:
\begin{newdef}
  \label{def:action-framed-bordism-bicat}
  Let $\F_{cfd}$ be the free symmetric monoidal bicategory on a
  coherent fully-dual object as in Definition \ref{def:F-cfd}. We
  construct a pseudo-natural equivalence $\alpha : \id_{\F_{cfd}} \to
  \id_{\F_{cfd}}$ of the identity functor on $\F_{cfd}$ as follows:
  \begin{itemize}
  \item For every object $c$ of $\F_{cfd}$, we need to provide a
    1-equivalence $\alpha_c :c \to c$.
    \begin{itemize}
    \item For the object $L$ of $\F_{cfd}$, we define $\alpha_L:=q: L
      \to L$,
    \item for the object $R$ of $\F_{cfd}$, we set
      $\alpha_R:=(q^{-1})^*$, which in pictorial form is given by
      \begin{equation}
        \label{eq:q-inv-dual}
        (q^{-1})^*:=  \vcenter{\hbox{\includegraphics[]{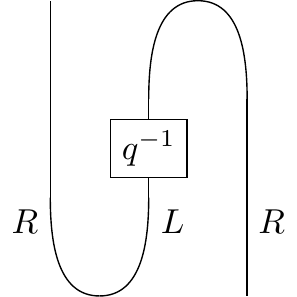}}}
      \end{equation}
    \end{itemize}
  \item for every 1-morphism $f:c \to d$ in $\F_{cfd}$, we need to
    provide a 2-isomorphism
    \begin{equation}
      \alpha_f:f \circ \alpha_c \to \alpha_d \circ f.
    \end{equation}
    \begin{itemize}
    \item For the 1-morphism $q:L \to L$ of $\F_{cfd}$ we define the
      2-isomorphism $\alpha_q:= \id_{q \circ q}$.
    \item For the 1-morphism ${q^{-1}}: L \to L$ we define the
      2-isomorphism
      \begin{equation}
        \alpha_{q^{-1}}:=\left( q^{-1} \circ q \xrightarrow{\phi} \id_L
          \xrightarrow{\psi^{-1}} q \circ q^{-1} \right).
      \end{equation}
    \item For the evaluation $\ev:L \ot R \to 1$, we define the
      2-isomorphism $\alpha_{\ev}$ to be the following composition:
    \end{itemize}
  \end{itemize}
  \begin{equation}
    \vcenter{\hbox{\includegraphics[width=\textwidth]{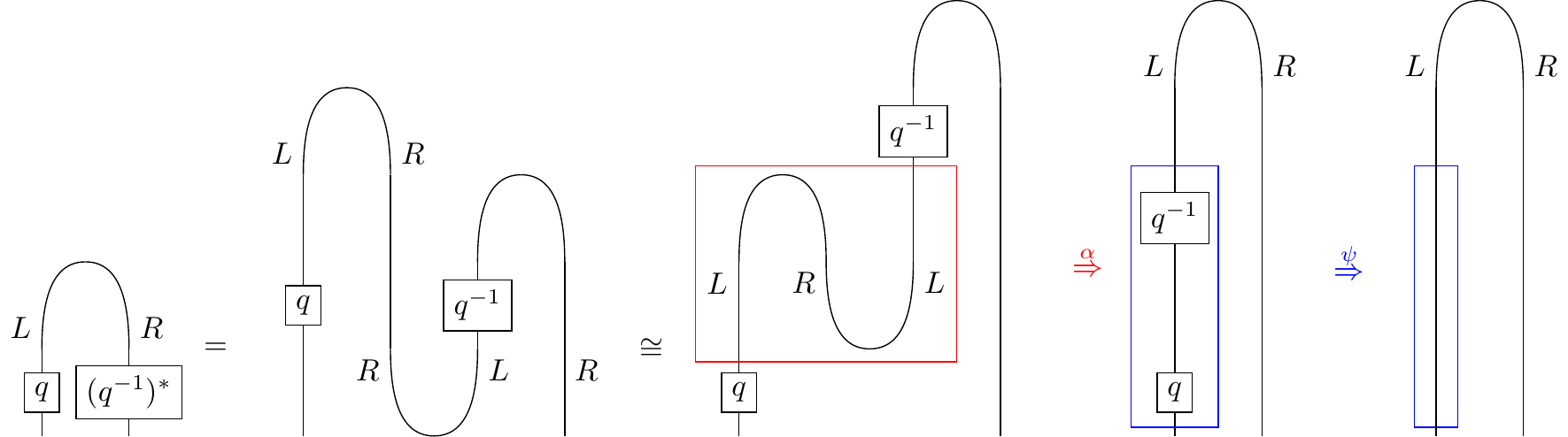}}}
  \end{equation}
  \begin{itemize}
  \item[]
    \begin{itemize}
    \item For the coevaluation $\coev:1 \to R \ot L$, we define the
      2-isomorphism $\alpha_{\coev}$ to be the composition
    \end{itemize}
  \end{itemize}
  \begin{equation}
    \vcenter{\hbox{\includegraphics[width=\textwidth]{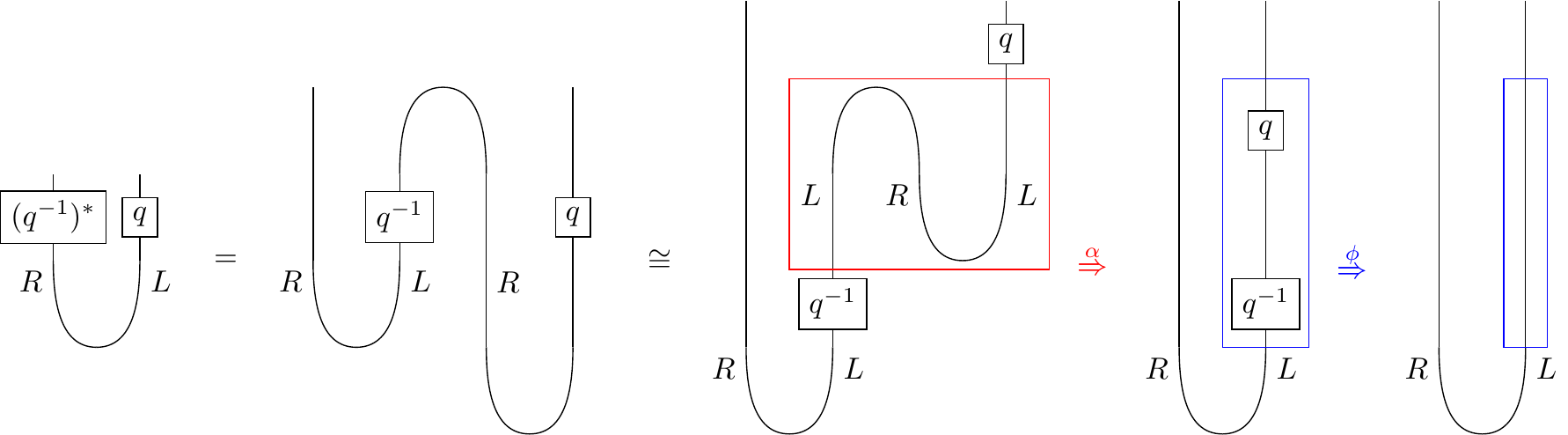}}}
  \end{equation}
\end{newdef}
One now checks that this defines a pseudo-natural transformation of
$\id_{\F_{cfd}}$. Using Definition \ref{def:non-triv-so2-action} gives
us a non-trivial $SO(2)$-action on $\F_{cfd}$.
\begin{remark}
  Note that the $SO(2)$-action on $\F_{cfd}$ does \emph{not} send
  generators to generators: for instance, the 1-morphism $(q^{-1})^*$
  in Equation \eqref{eq:q-inv-dual} is not part of the generating data
  of $\F_{cfd}$.
\end{remark}
\begin{remark}
  \label{action-fcd-symmetric-monoidal}
  Notice that the pseudo-natural equivalence
  $\alpha: \id_{\F_{cfd}} \to \id_{\F_{cfd}}$ constructed in
  Definition \ref{def:action-framed-bordism-bicat} is a
  \emph{monoidal} pseudo-natural transformation. This follows from the
  fact that we have defined $\alpha$ via generators and relations. In
  detail, we set
  \begin{equation}
    \begin{aligned}
      \alpha_X \ot \alpha_Y :&= \alpha_{X  \ot Y} \\
      \alpha_1 :&= \id_1.
    \end{aligned}
  \end{equation}
  Thus, we can choose the additional data $\Pi$ and $M$ of a monoidal
  pseudo-natural transformation to be trivial, and we obtain an
  $SO(2)$-action on $\F_{cfd}$ via symmetric monoidal morphisms.
\end{remark}

\subsection{Induced action on functor categories}
Starting from the action defined on $\F_{cfd}$, we induce an action on
the bicategory of functors $\Fun(\F_{cfd}, \cC)$ for an arbitrary
bicategory $\cC$. The construction of the induced action on the
bicategory of functors is a general construction. We provide details
in the following.
\begin{newdef}
  \label{def:induced-action-functor-bicats}
  Let $\rho: \Pi_2(G) \to \Aut(\cC)$ be a $G$-action on a bicategory
  $\cC$, and let $\cD$ be another bicategory. The $G$-action $\tilde
  \rho : \Pi_2(G) \to \Aut(\Fun(\cC,\cD))$ induced by $\rho$ is
  defined as follows:
  \begin{itemize}
  \item On objects $g \in G$, we define an endofunctor $\tilde
    \rho(g)$ of $\Fun(\cC,\cD)$ on objects $F$ on $\Fun(\cC,\cD)$ by
    $\tilde \rho (g)(F):= F \circ \rho(g^{-1})$. If $\alpha: F \to G$
    is a 1-morphism in $\Fun(\cC,\cD)$, we define
    \begin{equation}
      \tilde \rho(g)(\alpha):= \qquad
      \vcenter{\hbox{\includegraphics{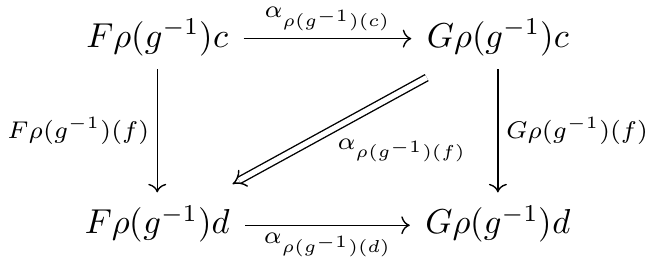}}}
    \end{equation}
    If $m:\alpha \to \beta$ is a 2-morphism in $\Fun(\cC,\cD)$, the
    value of $\tilde \rho(\gamma)$ is given by
    \begin{equation}
      \tilde \rho(\gamma)(m)_x:=m_{\rho(g^{-1})(x)}.
    \end{equation}
  \item on 1-morphisms $\gamma:g \to h$ of $\Pi_2(G)$, we define a
    1-morphism $ \tilde \rho(\gamma)$ in $\Aut(\Fun(\cC,\cD))$ between
    the two endofunctors $F \mapsto F \circ \rho(g^{-1})$ and $F
    \mapsto F \circ \rho(h^{-1})$ of $\Fun(\cC,\cD)$.

    Explicitly, this means:
    \begin{itemize}
    \item For each 2-functor $F: \cC \to \cD$, we need to provide a
      pseudo-natural transformation $\tilde \rho(\gamma)_F:F \circ
      \rho(g^{-1}) \to F \circ \rho(h^{-1})$ which we define via the
      diagram
      \begin{equation}
        \vcenter{\hbox{\includegraphics{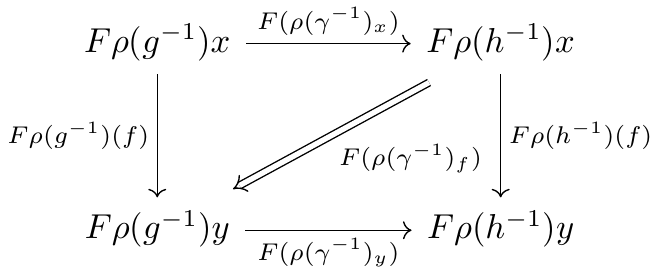}}}
      \end{equation}
      Here, $\gamma^{-1}$ is the \enquote{inverse} path of $\gamma$
      given by $t \mapsto \gamma(t)^{-1}$, and $f:x \to y$ is a
      1-morphism in $\cC$.
    \item For every pseudo-natural transformation $\alpha:F \to G$, we
      need to provide a modification $\tilde \rho(\gamma)_\alpha$ in
      the diagram
      \begin{equation}
        \vcenter{\hbox{\includegraphics{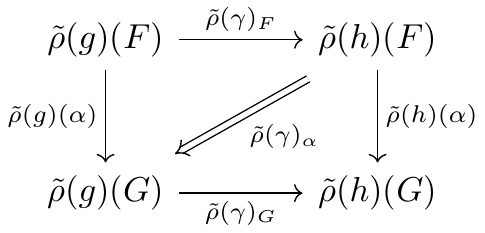}}}
      \end{equation}
      which we define by
      \begin{equation}
        \tilde \rho(\gamma)_\alpha := \alpha^{-1}_{\rho(\gamma^{-1})_x}.
      \end{equation}
    \end{itemize}
  \item For the 2-morphisms in $\Aut(\Fun(\cC,\cD))$ we proceed in a
    similar fashion: if $m: \gamma \to \gamma'$ is a 2-track, we have
    to provide a 2-morphism $\tilde \rho(m) : \tilde \rho(\gamma) \to
    \tilde \rho(\gamma')$ which can be done by explicitly writing down
    diagrams as above.
  \end{itemize}
  The rest of the data of a monoidal functor $ \tilde \rho$ is induced
  from the data of the monoidal functor $\rho$. 
\end{newdef}
For $\cC$ and $\mathcal{D}$ symmetric monoidal bicategories, the
bicategory of symmetric monoidal functors $\Fun_\ot(\cC,\cD)$ acquires
a monoidal structure by ``pointwise evaluation'' of functors. Such a
monoidal structure is also symmetric, see
\cite{schommerpries-classification}. The following result is
straightforward.
\begin{lemma}
\label{lem:induced-monoidal}
Let $\cC$ and $\mathcal{D}$ be symmetric monoidal bicategories, and
let $\mathcal{\rho}$ be a monoidal action of a group $G$ on
$\cC$. Then $\rho$ induces a monoidal action $\tilde \rho : \Pi_2(G)
\to \Aut_{\ot}(\Fun_\ot(\cC,\cD))$ .
\end{lemma}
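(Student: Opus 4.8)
The plan is to verify that the induced action $\tilde\rho$ from Definition \ref{def:induced-action-functor-bicats} is compatible with the pointwise monoidal structure on $\Fun_\ot(\cC,\cD)$, i.e.\ that each piece of data of $\tilde\rho$ (its values on objects, 1-morphisms, and 2-morphisms of $\Pi_2(G)$, together with the coherence data $\omega$, $\gamma$, $\delta$ making it a weak monoidal 2-functor) can be upgraded to \emph{monoidal} data. Since $\rho$ is assumed monoidal, each $\rho(g)$ is a symmetric monoidal endofunctor of $\cC$, each $\rho(\gamma)$ is a monoidal pseudo-natural transformation, and each $\rho(m)$ is a monoidal modification. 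The key observation is that precomposition and whiskering with monoidal data preserves monoidality, and the pointwise monoidal structure on the functor bicategory is defined precisely so that these operations interact well with it.

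First I would recall that the monoidal structure on $\Fun_\ot(\cC,\cD)$ sends a pair $(F,F')$ to the functor $X \mapsto F(X) \ot_\cD F'(X)$, with the symmetry and coherence inherited pointwise from $\cD$. I would then check that $\tilde\rho(g)(F) = F \circ \rho(g^{-1})$ is monoidal as an endofunctor of $\Fun_\ot(\cC,\cD)$: concretely, the comparison 1-morphism $\tilde\rho(g)(F \ot F') \to \tilde\rho(g)(F) \ot \tilde\rho(g)(F')$ is obtained by precomposing the identity with $\rho(g^{-1})$, using that $\rho(g^{-1})$ is a monoidal functor so that $(F \ot F')\circ \rho(g^{-1}) \cong (F\circ\rho(g^{-1})) \ot (F'\circ\rho(g^{-1}))$ pointwise. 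The analogous statements for the value on 1-morphisms $\tilde\rho(\gamma)$ and 2-morphisms $\tilde\rho(m)$ follow by whiskering the \emph{monoidal} pseudo-natural transformation $\rho(\gamma)$ and monoidal modification $\rho(m)$, where the monoidality of $\rho(\gamma)$ supplies exactly the invertible 2-cell needed to make $\tilde\rho(\gamma)_F$ a monoidal pseudo-natural transformation.

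Having equipped each level of $\tilde\rho$ with monoidal data, I would then verify that the global coherence data of $\tilde\rho$ as a monoidal 2-functor $\Pi_2(G) \to \Aut_\ot(\Fun_\ot(\cC,\cD))$ is itself monoidal; here the constraints $\omega,\gamma,\delta$ are induced from those of $\rho$, as already indicated at the end of Definition \ref{def:induced-action-functor-bicats}, and one checks that the induced constraints satisfy the coherence axioms of a monoidal homotopy action because the corresponding axioms hold for $\rho$ and because all constructions are natural in $\cC$. Most of these checks are diagram chases that commute for the formal reason that precomposition and pointwise tensoring commute up to coherent isomorphism.

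The main obstacle I expect is bookkeeping rather than conceptual: one must carefully track how the pointwise tensor product interacts with the various whiskerings and with the ``inverse path'' $\gamma^{-1}$ appearing in the definition of $\tilde\rho(\gamma)_F$, and confirm that the monoidal comparison 2-cells assemble coherently across all three categorical levels without running into a sign or direction mismatch. Because the paper flags this lemma as \enquote{straightforward}, I would not grind through every pentagon and hexagon; instead I would isolate the single nontrivial input, namely the monoidality of $\rho(\gamma)$ furnishing the comparison cell for $\tilde\rho(\gamma)_F$, and assert that the remaining coherences hold by the same pointwise naturality argument together with the already-verified monoidality of $\rho$.
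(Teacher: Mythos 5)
Your proposal is correct and matches the paper's intent: the paper offers no written proof at all (the lemma is declared \enquote{straightforward} right after the remark on the pointwise monoidal structure of $\Fun_\ot(\cC,\cD)$ and Definition \ref{def:induced-action-functor-bicats}), and what you spell out --- precomposition with the monoidal functors $\rho(g^{-1})$, whiskering with the monoidal transformations $\rho(\gamma)$ and modifications $\rho(m)$, and coherence data inherited from $\rho$ and checked pointwise in $\cD$ --- is exactly the routine verification the authors have in mind. The only point worth noting is that the comparison cell $(F \ot F')\circ \rho(g^{-1}) \cong \bigl(F\circ\rho(g^{-1})\bigr) \ot \bigl(F'\circ\rho(g^{-1})\bigr)$ is essentially an identity of underlying data by the very definition of the pointwise tensor product; the monoidality of $\rho(g^{-1})$ is needed not to produce this cell, but to ensure both sides are objects of $\Fun_\ot(\cC,\cD)$ in the first place and that the identification respects their monoidal structures.
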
 

\begin{ex}
  \label{ex:induced-action-functor-bicat}
  Our main example for induced actions is the $SO(2)$-action on $\F_{cfd}$ as in
  Definition \ref{def:action-framed-bordism-bicat}. This action
  only depends on a pseudo-natural equivalence $\alpha$ of the
  identity functor on $\id_{\F_{cfd}}$. Consequently, the induced
  action on $\Fun(\F_{cfd}, \cC)$ also only depends on a
  pseudo-natural equivalence of the identity functor on
  $\Fun(\F_{cfd}, \cC)$. Using the definition above, we construct this
  induced pseudo-natural equivalence $\tilde \alpha$ as follows.
  \begin{itemize}
  \item For every 2-functor $F: \cC \to \cD$, we need to provide a
    pseudo-natural equivalence $\tilde \alpha_F:F \to F$, which is
    given by the diagram
    \begin{equation}
      \tilde \alpha_F := \qquad
      \vcenter{\hbox{\includegraphics{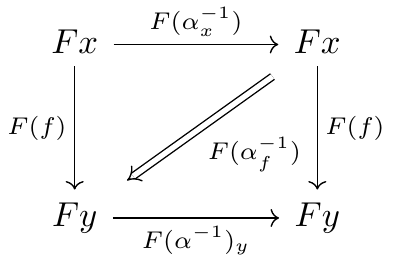}}}
    \end{equation}
  \item for every pseudo-natural transformation $\beta:F \to G$, we
    need to give a modification $\tilde \alpha_\beta$, which we define
    by the diagram
    \begin{equation}
      \vcenter{\hbox{\includegraphics{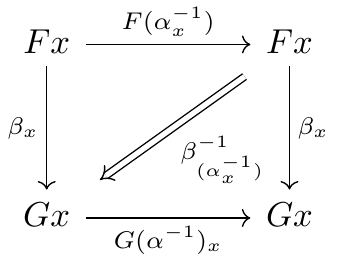}}}
    \end{equation}
  \end{itemize}
  This defines a pseudo-natural equivalence of the identity functor on
  $\Fun(\F_{cfd}, \cC)$. By Definition \ref{def:non-triv-so2-action},
  we obtain an $SO(2)$-action on $\Fun(\F_{cfd,} \cC)$. Note that
  $\F_{cfd}$ is even a \emph{symmetric monoidal} bicategory. The
  $SO(2)$-action on $\F_{cfd}$ of Definition
  \ref{def:action-framed-bordism-bicat} is via symmetric monoidal
  homomorphisms by Remark \ref{action-fcd-symmetric-monoidal}. Hence,
  if $\cC$ is also symmetric monoidal, then Lemma
  \ref{lem:induced-monoidal} provides a monoidal action on
  $\Fun_\otimes(\F_{cfd}, \cC)$.
\end{ex}

\subsection{Induced action on the core of fully-dualizable objects}
In this subsection, we compute the $SO(2)$-action on the core of
fully-dualizable objects coming from the $SO(2)$-action on
$\F_{cfd}$. Starting from the $SO(2)$-action on $\F_{cfd}$ as by
Definition \ref{def:action-framed-bordism-bicat}, we have shown in the
previous subsection how to induce an $SO(2)$-action on the bicategory
of symmetric monoidal functors $\Fun_\otimes(\F_{cfd}, \cC)$ for $\cC$
some symmetric monoidal bicategory. By the Cobordism Hypothesis for
framed manifolds, we obtain an induced $SO(2)$-action on
$\core{\cC^\fd}$. More precisely, denote by
\begin{equation}
    \label{eq:framed-cob-hyp-2}
    \begin{aligned}
      \ev_L : \Fun_\ot(\F_{cfd}, \cC) &\to \core{\cC^\fd} \\
      Z &\mapsto Z(L)
    \end{aligned}
  \end{equation}
  the evaluation map. The Cobordism Hypothesis for framed manifolds in
  two dimensions \cite{piotr14, Lurie09} states that $\ev_L$ is an
  equivalence of symmetric monoidal bicategories. Hence, the
  composition of the $SO(2)$-action on $\Fun_\ot(\F_{cfd} ,\cC)$ and
  (the inverse of) $\ev_L$ provides an $SO(2)$-action on
  $\core{\cC^\fd}$. The next proposition shows that this action is
  equivalent to the action $\rho^{S}$ induced by the Serre
  automorphism which is illustrated in Example
  \ref{ex:serre-action-core-c-fd}.
\begin{prop}
  \label{prop:serre-comes-from-fcfd}
  Let $\rho$ be the $SO(2)$-action on $\F_{cfd}$ given in Definition
  \ref{def:action-framed-bordism-bicat}, and let $\mathcal{C}$ be a
  symmetric monoidal bicategory. By Definition
  \ref{def:induced-action-functor-bicats}, we obtain a monoidal
  $SO(2)$-action on $\Fun_\ot(\F_{cfd}, \cC)$. Then, the monoidal
  $SO(2)$-action induced by the evaluation in Equation
  \eqref{eq:framed-cob-hyp-2} on $\core{\cC^\fd}$ is equivalent to
  $\rho^{S}$.
\end{prop}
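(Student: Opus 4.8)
The plan is to compare the two $SO(2)$-actions through their defining data. Both $\rho^{S}$ and the action transported from $\F_{cfd}$ are produced by Definition \ref{def:non-triv-so2-action}, so each is determined by a monoidal pseudo-natural equivalence of $\Id_{\core{\cC^\fd}}$, and the two actions are equivalent as soon as these equivalences are isomorphic through an invertible monoidal modification. The transformation underlying $\rho^{S}$ is the Serre transformation $S$, with component $S_X$ at $X$ (Example \ref{ex:serre-action-core-c-fd}). For the transported action I would use that the evaluation functor $\ev_L$ of Equation \eqref{eq:framed-cob-hyp-2} is a symmetric monoidal equivalence: transporting the monoidal action $\tilde\rho$ along $\ev_L$ gives a monoidal action whose underlying pseudo-natural transformation $\hat\alpha$ has, at an object $X = Z(L)$, the component obtained by evaluating $\tilde\alpha_Z \colon Z \to Z$ at the object $L$. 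By the description of $\tilde\alpha$ in Example \ref{ex:induced-action-functor-bicat}, this component is $Z(\alpha_L) = Z(q)\colon X \to X$. Thus everything comes down to comparing $Z(q)$ with $S_X$.

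The heart of the argument is the $2$-isomorphism $Z(q) \cong S_X$. Since $Z$ is a symmetric monoidal $2$-functor classifying the fully-dualizable object $X$, it sends the universal dual-pair data on $L$ to that on $X$; in particular $Z(\ev) \cong \ev_X$, and, because $2$-functors preserve adjunctions, $Z(\ev^L)$ is a left adjoint of $\ev_X$ and hence $Z(\ev^L) \cong \ev_X^L$. Applying $Z$ to the defining formula $\ev^L := \tau \circ (\id_R \ot q^{-1}) \circ \coev$ of Definition \ref{def:F-cfd} and using monoidality gives $\ev_X^L \cong \tau_{X^*,X} \circ (\id_{X^*} \ot Z(q^{-1})) \circ \coev_X$. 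On the other hand, Lemma \ref{lem:right-adjoint-ev-serre} gives $\ev_X^L \cong \tau_{X^*,X} \circ (\id_{X^*} \ot S_X^{-1}) \circ \coev_X$. Comparing the two and cancelling the invertible braiding together with $\coev_X$ — legitimate because $\coev_X$ exhibits $X^*$ as a dual of $X$, so that $A \mapsto \tau_{X^*,X} \circ (\id_{X^*} \ot A) \circ \coev_X$ is an equivalence of hom-categories $\Hom(X,X) \to \Hom(1, X \ot X^*)$ — yields $Z(q^{-1}) \cong S_X^{-1}$, and hence $Z(q) \cong S_X$. Equivalently, one first shows $q \cong S_L$ inside $\F_{cfd}$, where $L$ is fully dualizable, and then transports this identification along $Z$.

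It remains to promote the family of isomorphisms $\hat\alpha_X = Z(q) \cong S_X$ to an invertible monoidal modification $\hat\alpha \To S$. For pseudo-naturality I would check that, under this identification, the $2$-cell $\hat\alpha_f$ (transported from the datum $\alpha_f$ of Definition \ref{def:action-framed-bordism-bicat}) coincides with the structure $2$-cell $S_f$ of Proposition \ref{prop:serre-pseudo-natural-core-fd}; both are built from the same ingredients, namely the mates of the dinaturality $2$-cells of $\ev$ and $\ev^R$, so they should agree once the cancellation above is made canonical. For monoidality I would use that $\alpha$ is a monoidal pseudo-natural transformation (Remark \ref{action-fcd-symmetric-monoidal}), that $S$ is monoidal, and that $\ev_L$ is symmetric monoidal, so that the monoidal constraints are carried to one another. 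I expect the main obstacle to be exactly this coherence bookkeeping: arranging the cancellation in the previous paragraph so that the resulting isomorphisms are natural in $X$ and compatible with the monoidal constraints, rather than merely defined object by object. Once these compatibilities are verified, the underlying monoidal pseudo-natural transformations agree, and therefore so do the two induced $SO(2)$-actions.
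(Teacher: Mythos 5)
Your proposal is correct and follows essentially the same route as the paper: transport the action along the monoidal equivalence $\ev_L$ of Equation \eqref{eq:framed-cob-hyp-2}, identify the component of the transported transformation at $X = Z(L)$ with $Z(q)$, and then identify $Z(q)$ with the Serre automorphism $S_X$. The only difference is one of detail: the paper simply asserts that $q$ is sent to $S_{Z(L)}$ under the Cobordism Hypothesis equivalence, whereas you derive this from Lemma \ref{lem:right-adjoint-ev-serre}, preservation of adjoints under $Z$, and the duality-induced equivalence of hom-categories, so your write-up is, if anything, more complete than the paper's own proof on this point.
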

\begin{proof}
  Let
  \begin{equation}
    \rho: \Pi_2(SO(2)) \to \Aut_{\otimes}(\Fun_\ot(\F_{cfd},\cC))  
  \end{equation}
  be the $SO(2)$-action on the bicategory of symmetric monoidal
  functors $\Fun_\ot(\F_{cfd}, \cC)$ as in Example
  \ref{ex:induced-action-functor-bicat}. This action
  only depends on a monoidal pseudo-natural transformation $\alpha$ on the
  identity functor on $\Fun_\ot(\F_{cfd}, \cC)$. By \cite{piotr14},
  the 2-functor in Equation \eqref{eq:framed-cob-hyp-2} which
  evaluates a framed field theory on the object $L$ is an equivalence
  of bicategories. Thus, we obtain an $SO(2)$-action $\rho'$ on
  $\core{\cC^\fd}$. This action is given as follows. By Definition
  \ref{def:non-triv-so2-action}, we only need to provide a
  monoidal pseudo-natural transformation of the identity functor of
  $\core{\cC^\fd}$. In order to write down this monoidal pseudo-natural
  transformation, note that the functor
  \begin{equation}
    \begin{aligned}
      \Aut_\otimes(\Fun_\ot(\F_{cfd},\cC)) &\to \Aut_\otimes(\core{\cC^\fd}) \\
      F & \mapsto \ev_L \circ F \circ \ev_L^{-1}
    \end{aligned}
  \end{equation}
  is a monoidal equivalence. Hence, the induced pseudo-natural transformation
  of $\id_{\core{\cC^\fd}}$ is given as follows:
  \begin{itemize}
  \item For each fully-dualizable object $c$ of $\cC$, we assign the
    1-morphism $\alpha'_c: c \to c$ defined by
    \begin{equation}
      \alpha_c':= \ev_L \left( \alpha_{\left( \ev_L^{-1}(c) \right)} \right)
    \end{equation}
  \item for each 1-equivalence $f:c \to d$ between fully-dualizable
    objects of $\cC$, we define a 2-isomorphism $\alpha'_f : f \circ
    \alpha'_c \to \alpha'_d \circ f$ by the formula
    \begin{equation}
      \alpha_f':= \ev_L \left( \alpha_{\left( \ev_L^{-1}(f) \right)} \right).
    \end{equation}
  \end{itemize}
  Here, $\alpha$ is the pseudo-natural transformation as in Example
  \ref{ex:induced-action-functor-bicat}. In order to see that
  $\alpha'_c$ is given by the Serre automorphism of the
  fully-dualizable object $c$, note that the 1-morphism $q:L \to L$ of
  $\F_{cfd}$ is mapped to the Serre automorphism $S_{Z(L)}$ by the
  equivalence in Equation \eqref{eq:framed-cob-hyp-2}.
\end{proof}

\begin{cor}
  \label{cor:equivalence-fixed-point-bicats}
  Let $\rho$ be the $SO(2)$-action on $\F_{cfd}$ given in Definition
  \ref{def:action-framed-bordism-bicat}, and let $\mathcal{C}$ be a
  symmetric monoidal bicategory. Consider the $SO(2)$-action
  $\rho^{S}$ on $\core{\cC^\fd}$ induced by the
  Serre automorphism. Then the evaluation morphism $ev_{L}$ induces an
  equivalence of bicategories
  \begin{equation}
    \Fun_\ot(\F_{cfd} ,\cC)^{SO(2)} \to \core{\cC^\fd}^{SO(2)}.
  \end{equation}
\end{cor}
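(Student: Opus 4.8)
The plan is to realize $\ev_L$ as an $SO(2)$-equivariant equivalence and then appeal to the functoriality of the homotopy-fixed-point construction. Recall from Section \ref{sec:comp-homot-fixed} that fixed points are computed as the trilimit $\cC^G = \Nat(\Delta, \rho)$ of the action trifunctor. The assignment $\rho \mapsto \Nat(\Delta, \rho)$ is functorial in $\rho$ and carries equivalences of action trifunctors to equivalences of bicategories; this is the structural fact on which everything rests.

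First I would observe that $\ev_L$ is equivariant essentially by construction. In the proof of Proposition \ref{prop:serre-comes-from-fcfd} the induced action $\rho'$ on $\core{\cC^\fd}$ was defined by transporting the pseudo-natural transformation $\alpha$ on $\Fun_\ot(\F_{cfd}, \cC)$ along $\ev_L$, via $\alpha'_c = \ev_L(\alpha_{\ev_L^{-1}(c)})$ together with the monoidal equivalence $F \mapsto \ev_L \circ F \circ \ev_L^{-1}$. Thus $\ev_L$ intertwines the two actions, which is precisely the datum making it a tritransformation between the two trifunctors $B\Pi_2(SO(2)) \to \Bicat$ with values $\Fun_\ot(\F_{cfd}, \cC)$ and $\core{\cC^\fd}$.

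Since $\ev_L$ is an equivalence of bicategories by the framed Cobordism Hypothesis \cite{piotr14}, this tritransformation is an equivalence of action trifunctors. Applying $\Nat(\Delta, -)$ then produces an equivalence
\[
\Fun_\ot(\F_{cfd}, \cC)^{SO(2)} = \Nat(\Delta, \rho) \xrightarrow{\ \sim\ } \Nat(\Delta, \rho') = \bigl(\core{\cC^\fd}, \rho'\bigr)^{SO(2)},
\]
and unwinding the construction shows this equivalence is the one induced by $\ev_L$ on the underlying objects, $1$-morphisms and $2$-morphisms. Finally, Proposition \ref{prop:serre-comes-from-fcfd} identifies $\rho'$ with the Serre action $\rho^S$ up to equivalence, so a further application of $\Nat(\Delta, -)$ gives $\bigl(\core{\cC^\fd}, \rho'\bigr)^{SO(2)} \simeq \core{\cC^\fd}^{SO(2)}$; composing the two equivalences yields the claim.

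The main obstacle is justifying that $\Nat(\Delta, -)$ sends equivalences of action trifunctors to equivalences, since in the tricategorical setting this functoriality is delicate. I would either invoke the general invariance of (tri)limits under equivalences of diagrams, or, more concretely, exploit that both $\rho$ and $\rho^S$ arise from pseudo-natural transformations of the identity and hence fall under Theorem \ref{thm:fixed-points-non-triv-so2-action}. The latter route supplies explicit models for both fixed-point bicategories --- pairs $(c,\lambda)$ with $\lambda$ a trivialization of the respective auto-transformation, together with the commuting-square condition on $1$-morphisms --- and then it suffices to check directly that $\ev_L$ matches the trivialization data and the square condition on each side. This last verification is routine given the identification $q \mapsto S_{Z(L)}$ recorded in the proof of Proposition \ref{prop:serre-comes-from-fcfd}.
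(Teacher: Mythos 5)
Your proposal is correct and follows essentially the same two-pronged strategy as the paper: first invoke the $SO(2)$-equivariance of $\ev_L$ established in Proposition \ref{prop:serre-comes-from-fcfd} to get an induced equivalence on homotopy fixed points abstractly (the paper handles your "delicate functoriality" worry by citing an explicit description of induced maps on fixed points in \cite{hesse16}), and then, as a concrete alternative, apply Theorem \ref{thm:fixed-points-non-triv-so2-action} to both actions to obtain explicit models and check directly that $Z \mapsto (Z(L),\lambda_L)$ matches trivialization data and the $1$-morphism condition, using $Z(q) \cong S_{Z(L)}$. This is precisely the paper's argument, including your fallback route.
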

\begin{proof}
  By Proposition \ref{prop:serre-comes-from-fcfd}, the equivalence of
  Equation \eqref{eq:framed-cob-hyp-2} is $SO(2)$-equivariant. Thus,
  it induces an equivalence on homotopy fixed points,
  cf. \cite[Definition 5.3]{hesse16} for an explicit description. It
  is also possible to construct this equivalence directly: by theorem
  \ref{thm:fixed-points-non-triv-so2-action}, the bicategory of
  homotopy fixed points $\Fun_\ot(\F_{cfd},\cC)^{SO(2)}$ is equivalent
  to the bicategory where
  \begin{itemize}
  \item objects are given by symmetric monoidal functors $Z:\F_{cfd}
    \to \cC$, together with a modification $\lambda_Z: \tilde \alpha_Z
    \to \id_Z$. Explicitly, this means: if $\alpha$ is the
    endotransformation of the identity functor of $\F_{cfd}$ as in
    Definition \ref{def:action-framed-bordism-bicat}, we obtain two
    2-isomorphisms in $\cC$:
    \begin{equation}
      \begin{aligned}
        \lambda_L & :Z(q^{-1}) \to \id_{Z(L)} \\
        \lambda_R & :Z({({(q^{-1})}^*)}^{-1}) \to \id_{Z(R)}
      \end{aligned}
    \end{equation}
    which are compatible with evaluation and coevaluation,
  \item 1-morphisms are given by symmetric monoidal pseudo-natural
    transformations $\mu: Z \to Z'$, so that the analogue of the
    diagram in Equation \eqref{eq:better-thm-condition-1mor} commutes,
  \item 2-morphisms are given by symmetric monoidal modifications.
  \end{itemize}
  Now notice that $Z(q)$ is precisely the Serre automorphism of the
  object $Z(L)$. Thus, $\lambda_L$ provides a trivialization of (the
  inverse of) the Serre automorphism.  Applying theorem
  \ref{thm:fixed-points-non-triv-so2-action} again to the action of
  the Serre automorphism on the core of fully-dualizable objects shows
  that the functor $Z \mapsto (Z(L), \lambda_L)$ is an equivalence of
  homotopy fixed point bicategories.
\end{proof}
\begin{remark}
  Note that in Corollary \ref{cor:equivalence-fixed-point-bicats} we
  have proven that the evaluation induces an equivalence of
  bicategories
  $ \Fun_\ot(\F_{cfd} ,\cC)^{SO(2)} \to \core{\cC^\fd}^{SO(2)}$. We
  expect that this equivalence is an equivalence of \emph{monoidal}
  bicategories. In order to prove this, one would have to explicitly
  work out the monoidal structure of $ \core{\cC^\fd}^{SO(2)}$ which
  is induced from the monoidal structure of $\core{\cC^\fd}$.
\end{remark}

\section{Invertible Field Theories}
\label{sec:inv}
In the section, we consider the case of 2-dimensional oriented
\emph{invertible} topological field theories: such theories are in
many ways easier to describe than arbitrary TQFTs, and play an
important role in condensed matter physics and homotopy theory, as
suggested in \cite{Freed:2014iua, Freed:2014eja}.

Denote with $\Pic(\cC)$ the \emph{Picard groupoid} of a
symmetric monoidal bicategory $\cC$: it is defined as the maximal
subgroupoid of $\cC$ where the objects are invertible with respect to
the monoidal structure of $\cC$. Notice that $\Pic(\cC)$ inherits the symmetric monoidal structure from $\cC$. Recall that $\Fun_\ot(\Cob_{2,1,0},
\cC)$ is equipped with a monoidal structure which is defined
pointwise.

\begin{newdef}
  An invertible framed TQFT with values in $\cC$ is an
  invertible object in $\Fun_\ot(\Cob_{2,1,0}^{\fr}, \cC)$. The
  space of invertible framed TQFTs with values in $\cC$ is
  given by $\Pic(\Fun_\ot(\Cob_{2,1,0}, \cC))$.
\end{newdef} 
\begin{remark}
  Equivalently, an invertible TQFT assigns to the point in
  $\Cob_{2,1,0}$ an invertible object in $\cC$, and to any 1- and
  2-dimensional manifold it assigns invertible 1- and 2-morphisms.
\end{remark}
Since the Cobordism Hypothesis provides a \emph{monoidal} equivalence
between $\Fun_\ot(\Cob_{2,1,0}, \cC)$ and $\core{\cC^{\fd}}$, the
space of invertible framed TQFTs is given by
$\Pic(\core{\cC^{\fd}})$, since taking the Picard groupoid behaves
  well with respect to monoidal equivalences.

We begin by proving the following:
\begin{lemma}
\label{lem:pic-core-fd}
  Let $\cC$ be a symmetric monoidal bicategory. Then, there is an
  equivalence of symmetric monoidal bicategories
  \begin{equation}
    \Pic(\core{\cC^{\fd}})\cong \Pic(\cC).
  \end{equation}
\end{lemma}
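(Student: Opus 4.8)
The plan is to exhibit the inclusion of $\Pic(\cC)$ into $\core{\cC^\fd}$ as the desired equivalence, the point being that passing from $\cC$ to its fully-dualizable core discards no invertible object. The key observation is that every invertible object of a symmetric monoidal bicategory is automatically fully-dualizable. Indeed, if $X$ is invertible with monoidal inverse $X^{-1}$, then setting $X^* := X^{-1}$ the coherence equivalences $X \ot X^{-1} \cong 1$ and $1 \cong X^{-1} \ot X$ serve as $\ev_X$ and $\coev_X$, so that $X$ is dualizable. Since $\ev_X$ and $\coev_X$ are then 1-equivalences, and every 1-equivalence in a bicategory can be promoted to an adjoint equivalence, they admit both left and right adjoints, given by their quasi-inverses. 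By Definition \ref{def:fully-dualizable-object}, $X$ is therefore fully-dualizable.

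Consequently $\Pic(\cC)$, whose objects are the invertible objects of $\cC$, is a full sub-bigroupoid of $\core{\cC^\fd}$: both are maximal sub-bigroupoids of $\cC$, so their 1-morphisms are the 1-equivalences of $\cC$ and their 2-morphisms are the 2-isomorphisms of $\cC$, and we have just seen that the objects of $\Pic(\cC)$ lie among those of $\core{\cC^\fd}$. Moreover the monoidal unit $1$ is invertible, hence fully-dualizable, so it is an object of $\core{\cC^\fd}$, and the inherited monoidal structure makes this inclusion a symmetric monoidal 2-functor.

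Next I would identify the image of this inclusion with $\Pic(\core{\cC^\fd})$. An object $X$ of $\core{\cC^\fd}$ is invertible with respect to the inherited monoidal structure precisely when there is a fully-dualizable object $Y$ with $X \ot Y \cong 1$; any such $X$ is then invertible already in $\cC$, and conversely every object invertible in $\cC$ has an inverse which is again invertible, hence fully-dualizable, so it is invertible in $\core{\cC^\fd}$ as well. Thus the invertible objects of $\core{\cC^\fd}$ are exactly the invertible objects of $\cC$. Since $\Pic(-)$ only restricts the objects while keeping all 1-equivalences and 2-isomorphisms, the inclusion $\Pic(\cC) \hookrightarrow \Pic(\core{\cC^\fd})$ is essentially surjective on objects and an isomorphism on hom-categories, hence a biequivalence; as it respects the inherited symmetric monoidal structure, it is an equivalence of symmetric monoidal bicategories.

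The main obstacle is the first step, verifying that the evaluation and coevaluation of an invertible object genuinely admit the required left and right adjoints. This rests on the standard fact that a 1-equivalence is an adjoint equivalence, so that it is simultaneously its own left and right adjoint up to the chosen quasi-inverse. Once this is in hand the remainder is bookkeeping: checking that the maximal-subgroupoid constructions on the two sides agree on 1- and 2-morphisms, and that the symmetric monoidal structures, both inherited from $\cC$, are preserved by the inclusion.
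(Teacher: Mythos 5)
Your proposal is correct and takes essentially the same approach as the paper's proof: both rest on the single key observation that every invertible object is already fully dualizable, because the equivalences $X \ot X^{-1} \cong 1$ and $1 \cong X^{-1} \ot X$ serve as evaluation and coevaluation and, being 1-equivalences, can be promoted to adjoint equivalences (the paper cites \cite[Proposition A.27]{schommerpries-classification} for exactly the fact you invoke), so they admit the required left and right adjoints. The remaining identification of the two Picard groupoids is the same bookkeeping in both arguments; you merely spell out the comparison of objects, 1-morphisms and 2-morphisms a bit more explicitly than the paper, which compresses it into the remark that $\core{\cC^\fd}$ is a 2-groupoid.
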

\begin{proof}
  First note that $\core{\cC^\fd}$ is a monoidal 2-groupoid, so there
  is an equivalence of bicategories $\Pic(\core{\cC^\fd}) \cong
  \Pic(\cC^\fd)$. Now, it suffices to show that every object $X$ in
  $\Pic(\cC)$ is already fully-dualizable. Indeed, denote the
  tensor-inverse of $X$ by $X^{-1}$. By definition, we have
  1-equivalences $X \ot X^{-1} \cong 1$ and $ 1 \cong X^{-1} \ot X$,
  which serve as evaluation and coevaluation. These maps may be
  promoted to adjoint 1-equivalences by \cite[Proposition
  A.27]{schommerpries-classification}. Thus, the evaluation and
  coevaluation also admit adjoints, which suffices for fully-dualizability.
\end{proof}
Notice that given a monoidal bicategory $\cC$, any monoidal
auto-equivalence of $\cC$ preserves the Picard groupoid of $\cC$, since
it preserves invertibility of objects and (higher) morphisms. In particular, we have a
monoidal 2-functor
\begin{equation}
\Aut_\ot(\cC) \to \Aut_\ot(\Pic(\cC)) 
\end{equation}
obtained by restriction. Since the $SO(2)$-action induced by the
action on $\Cob_{2,1,0}$ is monoidal, it induces an action on
$\Pic(\cC)$.  To proceed, we need the following
\begin{lemma}
  \label{lemma:picard}
  Let $\cC$ be a symmetric monoidal bicategory such that $\Pic(\cC)$ is
  monoidally equivalent to $B^{2}\mathbb{K}^{*}$. Then
\begin{equation}
  \Aut_\ot(\Pic(\cC)) \simeq \mathrm{Iso}(\mathbb{K}^{*})
\end{equation}
where the category on the right hand side is regarded as a discrete symmetric monoidal bicategory.
\end{lemma}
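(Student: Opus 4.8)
The plan is to exploit the monoidal invariance of the assignment $\cC \mapsto \Aut_\ot(\cC)$: since by hypothesis $\Pic(\cC)$ is monoidally equivalent to $B^{2}\K^{*}$, we obtain an equivalence of monoidal bicategories $\Aut_\ot(\Pic(\cC)) \simeq \Aut_\ot(B^{2}\K^{*})$, so it suffices to compute the right-hand side. Recall that $B^{2}\K^{*}$ has a single object $\ast$, a single $1$-morphism $\id_\ast$, and $2$-morphism group $\K^*$, with vertical composition, horizontal composition, and the monoidal product of $2$-cells all given by the abelian group operation. I would first analyse the \emph{objects}: a monoidal auto-equivalence $F$ must fix $\ast$ and $\id_\ast$ and acts on the group of $2$-automorphisms of $\id_\ast$ by a group endomorphism $\phi\colon \K^* \to \K^*$, which invertibility of $F$ forces to be an automorphism. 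All remaining coherence data of $F$ (the unitor and compositor of the underlying pseudofunctor, and the tensorator and unit constraint of the monoidal structure) are $2$-cells of $B^{2}\K^{*}$, i.e. elements of $\K^*$; the first step is to check that these can be normalised to the identity up to a monoidal pseudo-natural equivalence, so that every object is equivalent to the strict functor $F_\phi$ determined by a single $\phi \in \Aut(\K^*) = \mathrm{Iso}(\K^*)$.

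Next I would determine the hom-categories. For a monoidal pseudo-natural transformation $\eta\colon F_\phi \to F_\psi$, the naturality $2$-cell at $\id_\ast$ is an element $\eta_{\id}\in\K^*$ whose naturality square reads $\phi(a)\cdot\eta_{\id} = \eta_{\id}\cdot\psi(a)$ for all $a \in \K^*$; since $\K^*$ is abelian this forces $\phi = \psi$, so there are no $1$-morphisms between non-isomorphic objects. For $\phi = \psi$ the engine is an idempotent argument: because $\ast \ot \ast = \ast$ and $\K^*$ is a group whose only idempotent is the unit, every monoidal $2$-cell $\theta$ lying over $\ast$ satisfies $\theta = \theta\ot\theta = \theta^{2}$ and is hence trivial. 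Applied to the monoidal-structure $2$-cell of $\eta$, this forces $\eta_{\id}=1$, so up to the normalisation above there is a unique monoidal pseudo-natural transformation $F_\phi \to F_\phi$; applied to the component $m_\ast$ of a monoidal modification it gives $m_\ast = m_\ast^{2} = 1$, so the only monoidal modification is the identity. Thus each hom-category is empty when $\phi\neq\psi$ and equivalent to the terminal category when $\phi=\psi$.

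Assembling these computations, $\Aut_\ot(B^{2}\K^{*})$ is equivalent to the bicategory whose objects are the automorphisms $\phi\in\mathrm{Iso}(\K^*)$ and whose only morphisms are identities, i.e. the discrete bicategory on $\mathrm{Iso}(\K^*)$. Under this equivalence the monoidal product of $\Aut_\ot$, namely composition of functors, corresponds to composition of automorphisms, which is exactly the monoidal structure on $\mathrm{Iso}(\K^*)$ regarded as a discrete symmetric monoidal bicategory; the symmetric structure is the evident inherited one. Combined with the first paragraph this gives the claimed equivalence $\Aut_\ot(\Pic(\cC)) \simeq \mathrm{Iso}(\K^*)$.

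The main obstacle I expect is not conceptual but the coherence bookkeeping in the normalisation step: one must verify that the unitor/compositor of the pseudofunctor and the tensorator/unit of the monoidal structure genuinely trivialise, and that each monoidal coherence axiom (for functors, transformations, and modifications) collapses, in this one-object $\ot$-idempotent setting, to the elementary equations used above. This parallels the computation of $\Aut_\ot(\Alg_2^\fd)$ in Lemma \ref{lem:algdsum} and the homotopy-fixed-point analysis of \cite{hsv16, hesse-thesis}, and I would import those techniques rather than unwind every coherence condition from scratch.
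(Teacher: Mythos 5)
Your proposal reaches the correct answer, but by a genuinely different route from the paper. The paper outsources essentially all of the bicategorical coherence to a delooping result of Cheng and Gurski on degenerate bicategories: it identifies the bicategory of symmetric monoidal functors from $B^2\K^*$ to itself with the ordinary category $\Fun_\ot(B\K^*,B\K^*)$ regarded as a bicategory with only identity 2-cells, and then performs a one-object, one-level-down computation (a monoidal functor $B\K^*\to B\K^*$ is a group homomorphism $\phi\colon\K^*\to\K^*$, and monoidality forces every natural transformation to be the identity), so that passing to invertibles yields $\mathrm{Iso}(\K^*)$. You instead unpack monoidal pseudofunctors, monoidal pseudo-natural transformations and monoidal modifications directly on the doubly degenerate bicategory $B^2\K^*$, using Eckmann--Hilton-type idempotency. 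Your route is self-contained where the paper's is not; the price is exactly the coherence bookkeeping you flag at the end, since in effect you are re-proving, in the one case needed, the degenerate-bicategory comparison that the paper simply cites.

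One step is stated too strongly, though it is repairable. The coherence axioms for a monoidal pseudo-natural transformation $\eta\colon F_\phi\to F_\phi$ do \emph{not} force its monoidal 2-cells to be trivial on the nose: writing $\Pi\in\K^*$ for the tensor-compatibility cell and $M\in\K^*$ for the unit cell, the associativity axiom is automatic (both sides equal $\Pi^2$ under Eckmann--Hilton) and the two unit axioms only give $\Pi=M$, so the hom-category $\Hom(F_\phi,F_\phi)$ has a $\K^*$ worth of distinct objects. (Also, the naturality cell $\eta_{\id_\ast}$ is already forced to be trivial by the unit axiom of an ordinary pseudo-natural transformation; monoidality plays no role there.) What is true is that any two such monoidal structures are related by a unique monoidal modification, so each hom-category is contractible rather than a singleton; your idempotent argument $m=m\ot m=m^2=1$ is valid precisely for monoidal modifications between transformations whose monoidal cells have first been normalised away, whereas applied indiscriminately to ``every monoidal 2-cell lying over $\ast$'' it would wrongly kill $\Pi$ and $M$ themselves. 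With ``unique'' read as ``unique up to unique monoidal modification,'' your computation of the hom-categories, and hence the equivalence with the discrete symmetric monoidal bicategory $\mathrm{Iso}(\K^*)$, goes through.
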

\begin{proof}
 Since $\Pic(\cC)\simeq{B^{2}\mathbb{K}^*}$ monoidally, we have to describe the
 Picard groupoid of the category of monoidal functors from
 $B^{2}\mathbb{K}^*$ to $B^2\mathbb{K}^*$.
 First, notice that the monoidal bicategory $B^2\K^*$ is the strict symmetric
 monoidal bicategory with a single object $\bullet$, and $B\K^{*}$ as
 the strict symmetric monoidal category of 1- and 2-morphisms. The bicategory
 of symmetric monoidal functors from $B^2\K^*$ to itself is then
 equivalent to the category $\Fun_{\otimes}(B\K^{*}, B\K^{*})$
 regarded as a bicategory with only identity 2-cells; see
 \cite{chenggurski} for details. 
 
 By direct investigation, $\Fun_{\otimes}(B\K^{*}, B\K^{*})$ is
 equivalent as a symmetric monoidal category to $\Hom(\K^*,\K^*)$
 regarded as a discrete category. Indeed, any monoidal functor
 $F:B\K^{*}\to B\K^{*}$ is determined by a group homomorphism
 $\phi^{F}:\K^{*}\to \K^{*}$, and monoidality ensures that any natural
 transformation must correspond to the identity element in
 $\K^{*}$. Notice that the composition of monoidal functors
 $F^{'}\circ F$ corresponds to $\phi^{F'}\circ\phi^{F}$. In
 follows then that the Picard groupoid of
 $\Fun_{\otimes}(B^{2}\K^{*}, B^{2}\K^{*})$ is given by
 $\mathrm{Iso}(\mathbb{K}^{*})$, which correspond to the invertible
 elements in the monoid $\Hom(\K^*,\K^*)$.
\end{proof}
Examples of symmetric monoidal bicategories satisfying the assumption
of Lemma \ref{lemma:picard} are $\Alg_{2}^\fd$ and $\Vect_2^\fd$. In general cases, we have the following 
\begin{lemma}
  Let $\cC$ be a symmetric monoidal bicategory such that $\Pic(\cC)$
  is monoidally equivalent to $B^{2}\mathbb{K}^{*}$. Then any monoidal $SO(2)$-action on $\Pic(\cC)$ is trivializable.
\end{lemma}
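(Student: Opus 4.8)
The plan is to identify the target of the action via Lemma \ref{lemma:picard} and then read off triviality directly from its \emph{discreteness}. In contrast to the proof of Corollary \ref{cor:action-alg2-trivializable}, one cannot simply invoke Proposition \ref{prop:action-trivializable} here: that proposition requires $\Aut_\ot$ of the bicategory being acted on to be algebraically $1$-connected, i.e.\ monoidally equivalent to some $B^2H$, whereas by Lemma \ref{lemma:picard} the relevant automorphism bicategory $\Aut_\ot(\Pic(\cC))$ is equivalent to $\mathrm{Iso}(\K^*)$ viewed as a \emph{discrete} symmetric monoidal bicategory, which is of the form $B^2H$ only in the degenerate case where $\mathrm{Iso}(\K^*)$ is trivial. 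So I would argue by hand. By definition, a monoidal $SO(2)$-action on $\Pic(\cC)$ is a monoidal $2$-functor $\rho : \Pi_2(SO(2)) \to \Aut_\ot(\Pic(\cC))$.

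First I would reduce the source. Since $SO(2) \simeq S^1$ is path connected and $2$-truncated (indeed $\pi_2(SO(2),e)=0$), the same reduction used in the proof of Proposition \ref{prop:action-trivializable} yields a monoidal equivalence $\Pi_2(SO(2)) \simeq B\pi_1(SO(2),e) = B\Z$, where $\Z$ is regarded as a discrete monoidal category. Thus the source is the one-object bicategory with $\Z$ worth of $1$-morphisms and only identity $2$-morphisms, and it suffices to analyse monoidal $2$-functors $B\Z \to \Aut_\ot(\Pic(\cC))$. By Lemma \ref{lemma:picard} the target is (monoidally equivalent to) the discrete bicategory whose objects form the group $\mathrm{Iso}(\K^*)$ under composition and whose only $1$- and $2$-morphisms are identities.

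The key step is that discreteness of the target forces $\rho$ to be trivial. Each $1$-morphism $n\in\Z$ must be sent to a $1$-endomorphism of $\rho(*)$, which can only be the identity; every $2$-morphism is sent to an identity; and all the comparison and coherence $2$-cells of the weak monoidal $2$-functor are $2$-morphisms in the target, hence identities as well, so no cocycle data can arise. It remains to pin down the image of the unique object: since $\rho$ preserves the monoidal unit up to equivalence, and equivalence is equality in a discrete bicategory, $\rho(*)$ is the identity automorphism $\id_{\K^*}$. Equivalently, monoidality gives $\rho(*) \cong \rho(*) \ot \rho(*)$, and the only idempotent of the group $\mathrm{Iso}(\K^*)$ is its unit. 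Hence $\rho$ agrees with the constant functor at the unit, and is in particular pseudo-naturally isomorphic to the trivial action.

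The only genuine obstacle is the (modest) bookkeeping: one must check that \emph{all} of the structural data of a weak monoidal $2$-functor is valued in the identities of the discrete target, so that $\rho$ carries no information beyond its value on the unique object. As a consistency check this matches the homotopy-theoretic picture: a homotopy $SO(2)$-action is classified by a map out of the simply connected space $BSO(2)$ into the delooping of $\Aut_\ot(\Pic(\cC))$, which by Lemma \ref{lemma:picard} is a $K(\mathrm{Iso}(\K^*),1)$; every such map is null-homotopic, confirming that the action is trivializable.
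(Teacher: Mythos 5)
Your proof is correct and follows essentially the same route as the paper's: reduce the source to $B\Z$ via path-connectedness and $2$-truncatedness of $SO(2)$, identify the target with the discrete symmetric monoidal bicategory $\mathrm{Iso}(\K^*)$ via Lemma \ref{lemma:picard}, use monoidality to force the unique object to land on the unit, and let discreteness of the target kill all remaining functor and coherence data. Your side remark that Proposition \ref{prop:action-trivializable} cannot be invoked directly here (since a discrete bicategory on $\mathrm{Iso}(\K^*)$ is not of the form $B^2H$ unless degenerate) accurately explains why the paper also argues by hand at this point.
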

\begin{proof}
Since we have monoidal equivalences $\Pi_{2}(SO(2))\simeq B\Z$ and
  $\Aut_\ot(\Pic(\cC))\simeq \textrm{Iso}(\mathbb{K}^{*})$, monoidal actions
  correspond to monoidal 2-functors $B \Z \to
  \textrm{Iso}(\mathbb{K}^{*})$: here we regard $B \Z$ as a symmetric monoidal bicategory with a single object, and the group $\mathrm{Iso}(\mathbb{K}^{*})$ as a discrete symmetric monoidal bicategory, i.e. all 1- and 2-cells are identities. Monoidality implies that the single object of
  $B \Z$ is sent to the identity isomorphism of $\mathbb{K}^*$, which
  correspond to the identity functor on $\Pic(\cC)$. This forces the
  functor to be trivial on objects. It is clear that the action is
  also trivial on 1- and 2-morphisms. Since there are no nontrivial
  morphisms in $\textrm{Iso}(\mathbb{K}^{*})$, the monoidal structure on the
  action $\rho$ must also be trivial.
\end{proof}
Finally, we need the following 
\begin{lemma}
  Let $\cC$ be a symmetric monoidal bicategory, and let $\rho_S$ be
  the $SO(2)$-action on $\core{\cC^\fd}$ by the Serre automorphism as
  in Example \ref{ex:serre-action-core-c-fd}. Since this action is
  monoidal, it induces an action on $\Pic(\core{\cC^\fd}) \cong
  \Pic(\cC)$ by Lemma \ref{lem:pic-core-fd}. We have then an
  equivalence of monoidal bicategories
  \begin{equation}
    \Pic\left((\core{\cC^\fd})^{SO(2)}\right) \cong \Pic(\cC)^{SO(2)}.
  \end{equation}
\end{lemma}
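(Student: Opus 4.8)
The plan is to prove the equivalence by giving both sides the same explicit description, using the computation of homotopy fixed points from Theorem~\ref{thm:fixed-points-non-triv-so2-action} together with Corollary~\ref{cor:homotopy-fixed-points-serre}, and then observing that passing to the Picard groupoid amounts on both sides to restricting to the same class of objects. Throughout I would use that $\core{\cC^\fd}$ and its homotopy fixed points are $2$-groupoids, so that taking $\Pic$ of either only restricts the objects to the invertible ones without cutting down the morphisms.

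First I would identify the induced action on $\Pic(\cC)$. Via the equivalence $\Pic(\cC) \cong \Pic(\core{\cC^\fd})$ of Lemma~\ref{lem:pic-core-fd}, the induced monoidal $SO(2)$-action on $\Pic(\cC)$ is the restriction of $\rho_S$ to the full sub-bigroupoid of invertible objects. Since $\rho_S$ arises from the Serre automorphism $S$ viewed as a monoidal pseudo-natural transformation of the identity, and since $\Pic(\cC)$ inherits the symmetric monoidal structure from $\cC$, this restriction is again a monoidal pseudo-natural transformation of $\Id_{\Pic(\cC)}$. Hence the induced action is again of the form in Definition~\ref{def:non-triv-so2-action}, and Theorem~\ref{thm:fixed-points-non-triv-so2-action} applies: $\Pic(\cC)^{SO(2)}$ is equivalent to the bicategory whose objects are pairs $(X,\lambda_X)$ with $X$ invertible in $\cC$ and $\lambda_X : S_X \to \id_X$ a trivialization, whose $1$-morphisms are $1$-equivalences $f:X \to Y$ making the diagram \eqref{eq:serre-auto-fixed-point-condition} commute, and whose $2$-morphisms are $2$-isomorphisms.

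Next I would describe the left-hand side. By Corollary~\ref{cor:homotopy-fixed-points-serre}, $(\core{\cC^\fd})^{SO(2)}$ is equivalent to the bicategory $\cF$ whose objects are pairs $(X,\lambda_X)$ with $X$ fully-dualizable and $\lambda_X : S_X \to \id_X$ a trivialization, with monoidal structure inherited from $\core{\cC^\fd}$, in which $(X,\lambda_X) \ot (Y,\lambda_Y) = (X \ot Y, \lambda_{X \ot Y})$, where $\lambda_{X\ot Y}$ is built from $\lambda_X$, $\lambda_Y$ and the monoidality datum $\Pi_{X,Y} : S_{X \ot Y} \to S_X \ot S_Y$ of the Serre automorphism. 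Taking $\Pic(\cF)$ restricts to those $(X,\lambda_X)$ that are invertible in $\cF$. The crucial claim is that $(X,\lambda_X)$ is invertible in $\cF$ if and only if $X$ is invertible in $\cC$. The forward implication is immediate, since the underlying-object functor $\cF \to \core{\cC^\fd}$ is monoidal and sends $(X,\lambda_X)$ to $X$. For the converse, if $X$ is invertible with inverse $X^{-1}$, then $X^{-1}$ is fully-dualizable by the argument in Lemma~\ref{lem:pic-core-fd}, and the data $\Pi$ and $M$ furnish a trivialization $\lambda_{X^{-1}} : S_{X^{-1}} \to \id_{X^{-1}}$: indeed $S_X \ot S_{X^{-1}} \cong S_{X \ot X^{-1}} \cong S_1 \cong \id_1$, and combining this chain with $\lambda_X^{-1}$ produces the required $2$-isomorphism, so that $(X^{-1},\lambda_{X^{-1}})$ is a monoidal inverse of $(X,\lambda_X)$.

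Finally I would assemble the equivalence. Both $\Pic\!\left((\core{\cC^\fd})^{SO(2)}\right)$ and $\Pic(\cC)^{SO(2)}$ then carry objects given by pairs $(X,\lambda_X)$ with $X$ invertible in $\cC$ and $\lambda_X$ a trivialization of $S_X$, and their $1$- and $2$-morphisms coincide, because the morphism descriptions of Theorem~\ref{thm:fixed-points-non-triv-so2-action} and Corollary~\ref{cor:homotopy-fixed-points-serre} agree and $\Pic$ of a $2$-groupoid restricts only the objects. The assignment that is the identity on these pairs is the desired equivalence, and it is monoidal since both monoidal structures are induced from that of $\cC$. I expect the main obstacle to be the converse half of the crucial claim: verifying that the trivialization $\lambda_{X^{-1}}$ manufactured from $\lambda_X$ via $\Pi$ and $M$ actually satisfies the coherence condition of Theorem~\ref{thm:fixed-points-non-triv-so2-action} against the unit and counit $1$-equivalences $X \ot X^{-1} \cong 1 \cong X^{-1} \ot X$, so that $(X^{-1},\lambda_{X^{-1}})$ is a genuine inverse inside the homotopy-fixed-point bicategory and not merely at the level of underlying objects.
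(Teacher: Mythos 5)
Your proposal is correct and takes essentially the same approach as the paper: the paper's own proof likewise applies Theorem~\ref{thm:fixed-points-non-triv-so2-action} to both sides and observes that the two explicit descriptions (invertible objects of $\cC$ with a trivialization of the Serre automorphism, 1-equivalences satisfying diagram \eqref{eq:serre-auto-fixed-point-condition}, 2-isomorphisms in $\cC$) coincide. The paper is in fact terser than you are---it simply asserts this coincidence---so your careful treatment of the \emph{crucial claim} that $(X,\lambda_X)$ is invertible in the fixed-point bicategory precisely when $X$ is invertible in $\cC$, including the construction of $\lambda_{X^{-1}}$ from $\Pi$ and $M$, fills in a point the paper leaves implicit.
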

\begin{proof}
  Theorem \ref{thm:fixed-points-non-triv-so2-action} allows us to
  compute the two bicategories of homotopy fixed points explicitly: we
  see that both bicategories have invertible objects $X$ of $\cC$,
  together with the choice of a trivialization of the Serre
  automorphism as objects. The 1-morphisms of both bicategories are
  given by 1-equivalences between invertible objects of $\cC$, so that
  the diagram in equation \eqref{eq:serre-auto-fixed-point-condition}
  commutes, while 2-morphisms are given by 2-isomorphisms in $\cC$.
\end{proof}
The implication of the above lemmas is the following: when $\cC$ is a
symmetric monoidal bicategory with $\Pic(\cC) \cong B^2\K^*$, the
action of the Serre-automorphism on framed, invertible field theories
with values in $\cC$ is trivializable. Thus \emph{all} framed
invertible 2d TQFTs with values in $\cC$ can be turned into orientable
ones.
\section{Comments on Homotopy Orbits}
\label{sec:comments}
So far, we have constructed an $SO(2)$-action on the bicategory
$\F_{cfd}$. We have shown how the action on $\F_{cfd}$ induces an
action on the bicategory of symmetric monoidal functors
$\Fun_\ot(\F_{cfd}, \cC)$, and that via the (framed) Cobordism
Hypothesis the induced action on $\core{\cC^\fd}$ for framed manifolds
agrees with the action of the Serre automorphism. As a consequence, we
are able to provide an equivalence of bicategories
\begin{equation}\label{eq:fixed}
  \Fun_\ot(\F_{cfd} ,\cC)^{SO(2)} \to \core{\cC^\fd}^{SO(2)}
\end{equation}
in Corollary \ref{cor:equivalence-fixed-point-bicats}. We could then
in principle deduce the Cobordism Hypothesis for oriented manifolds
from \ref{eq:fixed}, once we provide an equivalence of bicategories
\begin{equation}
  \label{eq:suffices-to-show-for-oriented-cob-hyp}
  \Fun_\ot(\F_{cfd} ,\cC)^{SO(2)} \cong \Fun_\ot(\Cob_{2,1,0}^\ori, \cC).
\end{equation}
The above equivalence can be proven directly by using a presentation
of the oriented bordism bicategory via generators and relations, given
in \cite{schommerpries-classification}, and the notion of a Calabi-Yau
object internal to a bicategory. The details appear in
\cite{hesse-thesis}.

Here, we want instead to comment on an alternative approach. Namely, in
order to provide an equivalence as in
\eqref{eq:suffices-to-show-for-oriented-cob-hyp}, it suffices to
identify the oriented bordism bicategory with the \emph{colimit} of
the $SO(2)$-action on $\F_{cfd}$. Indeed, recall that one may define a
$G$-action on a bicategory $\cC$ to be a trifunctor $\rho: B \Pi_2(G)
\to \Bicat$ with $\rho(*)=\cC$. The tricategorical colimit of this
functor will then be the bicategory of co-invariants or \emph{homotopy
  orbits} of the $G$-action, denoted by $\cC_G$. By Definition of the
tricategorical colimit, and the fact that colimits are sent to limits
by the $\textrm{Hom}$ functor, we then obtain an equivalence of
bicategories
\begin{equation}\label{eq:colim-induced}
  \Fun_\ot(\cC_G ,\cD) \cong \Fun_\ot(\cC,\cD)^G.
\end{equation}
The following conjecture is then natural:
\begin{conj}
  \label{conj:colim}
  The bicategory of co-invariants of the $SO(2)$-action on $\F_{cfd}$
  is monoidally equivalent to the oriented bordism bicategory, i.e. we have a monoidal equivalence
  \begin{equation}
    ( \F_{cfd})_{SO(2)} \cong \Cob_{2,1,0}^\ori .
  \end{equation}
  Furthermore, the colimit is compatible with the monoidal structure.
\end{conj}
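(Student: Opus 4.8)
The plan is to establish the equivalence \emph{corepresentably}, by exploiting the universal property of the tricategorical colimit rather than by computing the homotopy orbits $(\F_{cfd})_{SO(2)}$ by hand. Recall from Equation \eqref{eq:colim-induced} that for every symmetric monoidal bicategory $\cD$ there is a pseudonatural equivalence
\[
  \Fun_\ot\bigl((\F_{cfd})_{SO(2)}, \cD\bigr) \simeq \Fun_\ot(\F_{cfd}, \cD)^{SO(2)}.
\]
By Corollary \ref{cor:equivalence-fixed-point-bicats}, the right-hand side is equivalent to the homotopy-fixed-point bicategory $\core{\cD^\fd}^{SO(2)}$ of the Serre-automorphism action. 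Hence $(\F_{cfd})_{SO(2)}$ corepresents the tri-functor $\cD \mapsto \core{\cD^\fd}^{SO(2)}$ on $\SymMonBicat$, and the strategy is to show that $\Cob_{2,1,0}^\ori$ corepresents the \emph{same} tri-functor, after which a Yoneda argument forces the two corepresenting objects to be equivalent.

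The missing input is therefore an \emph{oriented Cobordism Hypothesis} in the bicategorical setting, namely a pseudonatural equivalence
\[
  \Fun_\ot(\Cob_{2,1,0}^\ori, \cD) \simeq \core{\cD^\fd}^{SO(2)}.
\]
I would prove this along the lines indicated in \cite{schommerpries-classification} and \cite{hesse-thesis}: start from a presentation of $\Cob_{2,1,0}^\ori$ by generators and relations, observe that a symmetric monoidal functor out of it is determined by a fully-dualizable object of $\cD$ together with the extra structure supplied by the orientation (a trivialization of the Serre automorphism, i.e. a Calabi-Yau/symmetric Frobenius datum), and match this, via Corollary \ref{cor:homotopy-fixed-points-serre}, with the explicit description of $\core{\cD^\fd}^{SO(2)}$ as pairs $(X, \lambda_X)$. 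Composing the two equivalences yields a pseudonatural equivalence $\Fun_\ot((\F_{cfd})_{SO(2)}, \cD) \simeq \Fun_\ot(\Cob_{2,1,0}^\ori, \cD)$ in $\cD$.

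To pass from this family of equivalences to an equivalence of bicategories, I would invoke the tricategorical Yoneda lemma: two objects of $\SymMonBicat$ that corepresent pseudonaturally equivalent tri-functors are themselves equivalent. For the monoidal refinement, note that $\F_{cfd}$ is symmetric monoidal and its $SO(2)$-action is monoidal by Remark \ref{action-fcd-symmetric-monoidal}, so the colimit inherits a symmetric monoidal structure and all of the equivalences above can be promoted to monoidal ones, the functor bicategories carrying pointwise monoidal structures that homotopy fixed points and orbits respect; tracking this monoidal data through the argument gives the second assertion of the conjecture.

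The hard part will be twofold. First, the oriented Cobordism Hypothesis for bicategories is itself a substantial theorem: writing down a workable presentation of $\Cob_{2,1,0}^\ori$ and checking that the homotopy-fixed-point data matches exactly the orientation structure requires the full generators-and-relations machinery of \cite{schommerpries-classification}, and this is precisely where the genuinely geometric content resides. Second, the Yoneda step must be carried out in a tricategory, where coherence is delicate; one must verify that the comparison equivalences are pseudonatural in $\cD$ (not merely defined objectwise) and that the monoidal enhancements are compatible, so that the tricategorical Yoneda lemma genuinely applies. A direct alternative, computing $(\F_{cfd})_{SO(2)}$ as an explicit homotopy quotient by coherently adjoining a trivialization of the framing-rotation $\alpha$ and matching its generators and relations with those of $\Cob_{2,1,0}^\ori$, would avoid the Yoneda argument but replaces it with even more involved bookkeeping of cells, so I expect the corepresentable route to be the more tractable one.
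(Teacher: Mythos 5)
Your plan reproduces, almost step for step, the paper's own reasoning in Section \ref{sec:comments}: there the authors prove exactly the statement ``Conjecture \ref{conj:colim} is equivalent to the Cobordism Hypothesis for oriented 2-manifolds'' by the same corepresentability-plus-Yoneda argument you outline (tricategorical Yoneda cited from \cite{buhne15}, uniqueness of representing objects), and they likewise point to the generators-and-relations presentation of $\Cob_{2,1,0}^\ori$ in \cite{schommerpries-classification} and to \cite{hesse-thesis} for the equivalence \eqref{eq:suffices-to-show-for-oriented-cob-hyp}. So your route is not a different one; it is the paper's route, run in the direction ``oriented Cobordism Hypothesis $\Rightarrow$ conjecture''.

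The genuine gap --- and the reason the paper states this as a conjecture rather than a theorem --- lies in the input you ``recall'' at the very start, namely Equation \eqref{eq:colim-induced}. In the form $\Fun_\ot\bigl((\F_{cfd})_{SO(2)},\cD\bigr) \cong \Fun_\ot(\F_{cfd},\cD)^{SO(2)}$ this presupposes that the tricategorical colimit $(\F_{cfd})_{SO(2)}$ exists \emph{as a symmetric monoidal bicategory}, with its universal property holding against symmetric monoidal functor bicategories; but the action is defined as a trifunctor into $\Bicat$, the paper never constructs the colimit in $\SymMonBicat$, and its compatibility with the monoidal structure is precisely the second assertion of the conjecture. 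Your sentence ``the colimit inherits a symmetric monoidal structure'' asserts the thing to be proven. The same issue recurs on the fixed-point side: the paper explicitly flags (in the remarks following Corollary \ref{cor:homotopy-fixed-points-serre} and Corollary \ref{cor:equivalence-fixed-point-bicats}) that the trilimit computed in $\SymMonBicat$ and the monoidality of the equivalence $\Fun_\ot(\F_{cfd},\cC)^{SO(2)} \to \core{\cC^\fd}^{SO(2)}$ are only \emph{expected}; that equivalence is established objectwise as plain bicategories, and neither its pseudonaturality in $\cC$ nor its monoidal enhancement is proven. Without these two pieces of infrastructure the Yoneda step has nothing to bite on, so your argument does not close the conjecture: it reduces it, exactly as the paper does, to the oriented Cobordism Hypothesis \emph{together with} the still-missing theory of monoidal homotopy co-invariants.
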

\begin{remark}
  We believe that this is not an isolated phenomenon, in the sense
  that any higher bordism category equipped with additional tangential
  structure should be obtained by taking an appropriate colimit of a
  $G$-action on the framed bordism category.
\end{remark}
Given Conjecture \ref{conj:colim} and Equation \ref{eq:colim-induced}, we obtain the following sequence of monoidal equivalences
\begin{equation}\label{eq:chain}
\Fun_\ot(\Cob_{2,1,0}^\ori, \cC)\cong\Fun_\ot(( \F_{cfd})_{SO(2)}, \cC)\cong\Fun_\ot(\F_{cfd} ,\cC)^{SO(2)}\cong\core{\cC^\fd}^{SO(2)}.
\end{equation}
Hence Conjecture \ref{conj:colim} implies the Cobordism Hypothesis for oriented 2-manifolds. Notice that the chain of equivalences in \ref{eq:chain} is natural in $\mathcal{C}$.\\
On the other hand, the Cobordism Hypothesis for oriented manifolds in
2-dimensions implies Conjecture \ref{conj:colim} . Indeed, by using a
tricategorical version of the Yoneda Lemma, as developed for instance
in \cite{buhne15}, the chain of equivalences
\begin{align}
\begin{split}
\Fun_\ot(\Cob_{2,1,0}^\ori, \cC) & \cong \core{\cC^\fd}^{SO(2)}\\
& \cong \Fun_\ot(\Cob_{2,1,0}^\fr, \cC)^{SO(2)}\\ 
& \cong \Fun_\ot(( \F_{cfd})_{SO(2)}, \mathcal{C})
\end{split}
\end{align}
implies that $\Cob_{2,1,0}^\ori$ is equivalent to $( \F_{cfd})_{SO(2)}$, due to the uniqueness of representable objects.\\
We summarize the above arguments in the following 
\begin{lemma}
The Cobordism Hypothesis for oriented 2-dimensional manifolds is equivalent to Conjecture \ref{conj:colim}.
\end{lemma}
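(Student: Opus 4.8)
The plan is to establish the stated equivalence as a biconditional, proving each implication by assembling the two structural bridges already available: the natural, $SO(2)$-equivariant equivalence of Corollary \ref{cor:equivalence-fixed-point-bicats}, and the colimit--limit identity \eqref{eq:colim-induced}, which exchanges functors out of the homotopy colimit $(\F_{cfd})_{SO(2)}$ with homotopy fixed points of the induced action. Both tools are natural in the target bicategory $\cC$, and the entire argument is then an exercise in propagating this naturality through a chain of equivalences.

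First I would treat the implication that Conjecture \ref{conj:colim} yields the oriented Cobordism Hypothesis. Assuming the monoidal equivalence $(\F_{cfd})_{SO(2)} \cong \Cob_{2,1,0}^\ori$, I apply $\Fun_\ot(-,\cC)$ to it, rewrite $\Fun_\ot((\F_{cfd})_{SO(2)}, \cC)$ as $\Fun_\ot(\F_{cfd}, \cC)^{SO(2)}$ via \eqref{eq:colim-induced}, and finally invoke Corollary \ref{cor:equivalence-fixed-point-bicats} to obtain $\core{\cC^\fd}^{SO(2)}$. This is precisely the chain \eqref{eq:chain}, and the only point to verify is that each link is monoidal and natural in $\cC$, so that the composite is a genuine statement of the oriented Cobordism Hypothesis rather than a merely objectwise equivalence.

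For the converse I would run the same chain in reverse: starting from $\Fun_\ot(\Cob_{2,1,0}^\ori, \cC) \cong \core{\cC^\fd}^{SO(2)}$, I use Corollary \ref{cor:equivalence-fixed-point-bicats} to rewrite the right-hand side as $\Fun_\ot(\F_{cfd}, \cC)^{SO(2)}$, and then \eqref{eq:colim-induced} to identify it with $\Fun_\ot((\F_{cfd})_{SO(2)}, \cC)$. Both $\Cob_{2,1,0}^\ori$ and $(\F_{cfd})_{SO(2)}$ now represent the same tri-presheaf $\cC \mapsto \core{\cC^\fd}^{SO(2)}$ on $\SymMonBicat$; applying the tricategorical Yoneda lemma in the form of \cite{buhne15}, together with the uniqueness of representing objects, then produces the monoidal equivalence $\Cob_{2,1,0}^\ori \cong (\F_{cfd})_{SO(2)}$ demanded by the conjecture. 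The hard part will be this Yoneda step: one must upgrade the chain to an equivalence of tri-presheaves that is both natural in $\cC$ and monoidal, which in turn requires that $(\F_{cfd})_{SO(2)}$ be a genuine tricolimit in $\SymMonBicat$ and that \eqref{eq:colim-induced} hold in its monoidal refinement --- the same subtlety already flagged in the remark following Corollary \ref{cor:homotopy-fixed-points-serre}.
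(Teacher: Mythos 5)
Your proposal is correct and follows essentially the same route as the paper: the forward implication is exactly the chain \eqref{eq:chain} built from Conjecture \ref{conj:colim}, Equation \eqref{eq:colim-induced}, and Corollary \ref{cor:equivalence-fixed-point-bicats}, while the converse inverts that chain and invokes the tricategorical Yoneda lemma of \cite{buhne15} together with uniqueness of representing objects. Your explicit flagging of the naturality-in-$\cC$ and monoidality requirements for the Yoneda step is a point the paper handles only by assertion (and by building monoidal compatibility into the conjecture itself), so it is a welcome clarification rather than a deviation.
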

It would then be of great interest to develop concrete constructions
of homotopy co-invariants of actions of groups on bicategories, in the
same spirit of \cite{hsv16} and the present work, in order to verify
directly the equivalence in Conjecture \ref{conj:colim}, and to extend the above arguments to general tangential $G$-structures.

\begin{landscape}
  \vspace*{\fill}
  \begin{figure}[htbp]
    \includegraphics[width=\linewidth]{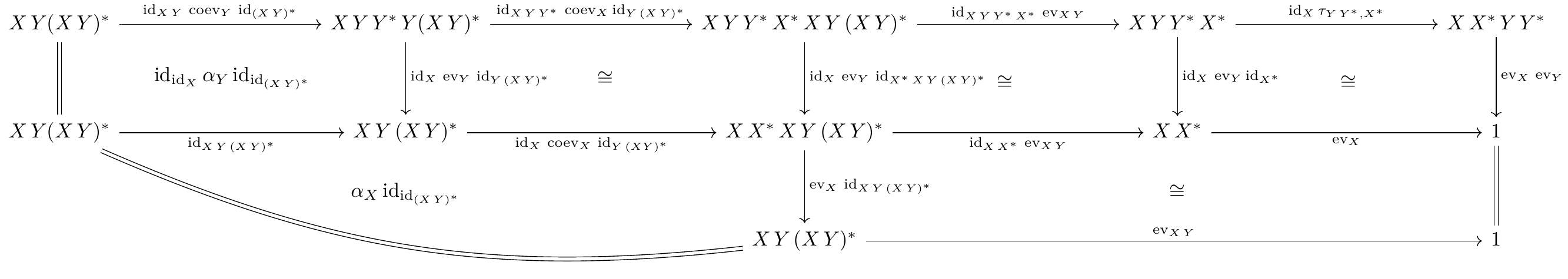}
    \caption{Diagram for the proof of Lemma \ref{lem:ev-monoidal}}
    \label{fig:ev-monoidal}
  \end{figure}
  \vspace*{\fill}
\end{landscape}

\providecommand{\bysame}{\leavevmode\hbox to3em{\hrulefill}\thinspace}
\providecommand{\MR}{\relax\ifhmode\unskip\space\fi MR }
\providecommand{\MRhref}[2]{%
  \href{http://www.ams.org/mathscinet-getitem?mr=#1}{#2}
}
\providecommand{\href}[2]{#2}

\end{document}